\tikzset{
    >=stealth',
    punkt/.style={
           rectangle,
           rounded corners,
           draw=black, thick,
           text width=6.5em,
           minimum height=2em,
           text centered},
    pil/.style={
           ->,
           thick,
           shorten <=2pt,
           shorten >=2pt,}
}
\newcommand{\A}{\ensuremath\mathbb{A}}
\newcommand{\R}{\mathbb{R}}
\renewcommand{\S}{\mathbb{S}}
\newcommand{\bl}{\color{black}}
\theoremstyle{definition}
\newtheorem{df}{Definition}
\theoremstyle{theorem}
\newtheorem{thm}{Theorem}
\newtheorem{lem}{Lemma}
\theoremstyle{theorem}
\theoremstyle{remark}
\theoremstyle{remark}
\newtheorem{rem}{Remark}
\theoremstyle{example}
\newtheorem{ex}{Example}
\theoremstyle{notation}
\newtheorem{notation}{Notation}
\theoremstyle{theorem}
\renewcommand*\env@matrix[1][*\c@MaxMatrixCols c]{%
  \hskip -\arraycolsep
  \let\@ifnextchar\new@ifnextchar
  \array{#1}}
\title{Sections and Chapters}
\title{Proof complexity of Mal'tsev CSP}
\author{Azza Gaysin\thanks{The major part of this work was carried out during the author's affiliation with the Department of Mathematical Logic, Faculty of Computer Science and Mathematics, at Passau University.}}
\date{}
\begin{document}
\allowdisplaybreaks
\maketitle
\begin{abstract}
Constraint Satisfaction Problems (CSPs) form a broad class of combinatorial problems, which can be formulated as homomorphism problems between relational structures. The CSP dichotomy theorem classifies all such problems over finite domains into two categories: NP-complete and polynomial-time \cite{8104069}, \cite{10.1145/3402029}.

Polynomial-time CSPs can be further subdivided into smaller subclasses. Mal'tsev CSPs are defined by the property that every relation in the problem is invariant under a Mal’tsev operation, a ternary operation $\mu$ satisfying $\mu(x, y, y) = \mu(y, y, x) = x$ for all $x, y$. Bulatov and Dalmau proved that Mal'tsev CSPs are solvable in polynomial time, presenting an algorithm for such CSPs \cite{articlestdfh}. 

The negation of an unsatisfiable CSP instance can be expressed as a propositional tautology. We formalize the algorithm for Mal'tsev CSPs within bounded arithmetic $V^1$, which captures polynomial-time reasoning and corresponds to the extended Frege proof system. We show that $V^1$ proves the soundness of Mal'tsev algorithm, implying that tautologies expressing the non-existence of a solution for unsatisfiable instances of Mal'tsev CSPs admit short extended Frege proofs. 

In addition, with small adjustments, we achieved an analogous result for Dalmau's algorithm that solves generalized majority-minority CSPs - a common generalization of near-unanimity operations and Mal’tsev operations.

\end{abstract}

\section{Introduction}

Constraint Satisfaction Problems (CSPs) encompass a broad class of combinatorial problems. In a CSP instance, the goal is to assign values from a finite domain to a set of variables such that all specified constraints are simultaneously satisfied. A constraint is defined as a pair consisting of a tuple of variables, called scope, and a relation that specifies the allowed combinations of values for the variables in that scope, called constraint relation. If we restrict the set of relations to a set $\Gamma$, called constraint language, we denote an instance of the CSP over that set as $\operatorname{CSP}(\Gamma)$. An equivalent formulation of CSPs is in terms of homomorphisms between relational structures. Given a relational structure $\mathcal{A}$, the problem $\operatorname{CSP}(\mathcal{A})$ asks, for an input structure $\mathcal{X}$ over the same relational vocabulary, whether there exists a homomorphism from $\mathcal{X}$ to $\mathcal{A}$.

Despite their general NP-completeness \cite{8104069}, \cite{10.1145/3402029}, a rich algebraic theory provides precise criteria for tractable subclasses. These criteria are typically phrased in terms of the existence of so-called polymorphisms - operations on the domain that preserve all constraint relations. A well-studied tractable subclass is formed by Mal’tsev CSPs, where the template admits a Mal’tsev polymorphism, that is, a ternary operation $\mu$ on a domain $A$ satisfying $\mu(x, y, y) =\mu(y, y, x)= x$ for all $x,y\in A$. We say that CSP($\mathcal{A}$) is Mal'tsev, if every relation from $\mathcal{A}$ is preserved by a Mal'tsev polymorphism. Mal'tsev CSPs comprise, in particular, affine problems and the general subgroup problem. Bulatov and Dalmau \cite{articlestdfh} proved that Mal'tsev CSPs are solvable in polynomial time by developing an algorithm based on the notion of compact representations. 

When viewed as a homomorphism problem, each instance $\mathcal{X}$ of $\operatorname{CSP}(\mathcal{A})$ can be encoded as a conjunction of a simple and easily interpretable set of clauses, denoted by $\operatorname{CNF}(\mathcal{X}, \mathcal{A})$; see, for example, the constructions in~\cite{10.1145/3265985},~\cite{9579073}. This set is built from propositional atoms $p_{ij}$, one for each element $i$ in the structure $\mathcal{X}$ and each element $j$ in $\mathcal{A}$, representing the possible assignments in a homomorphism between the relational structures. For unsatisfiable instances of $\operatorname{CSP}(\mathcal{A})$, the negation of $\operatorname{CNF}(\mathcal{X}, \mathcal{A})$, which expresses that no homomorphism from $\mathcal{X}$ to $\mathcal{A}$ exists, is a tautology of size polynomial in the sizes of $\mathcal{X}$ and $\mathcal{A}$. This opens the possibility of analyzing tractable subclasses of CSPs using the methods of proof complexity. The question is, in which propositional proof systems such tautologies admit short proofs. 

The first to study this correspondence were Atserias and Ochremiak~\cite{10.1145/3265985}. They studied the impact of standard efficient reductions between constraint languages on the complexity of their propositional proofs. We considered the correspondence for various tractable subclasses of CSPs in \cite{9579073}, \cite{gaysin2023proof} and \cite{gaysin2024}, providing formalizations in different theories of bounded arithmetic. Each theory of bounded arithmetic is associated with a propositional proof system in the sense that, if a universal statement is provable in the theory, then all of its propositional translations admit polynomial-size proofs in the corresponding proof system. 

In this paper, we adapt our framework to Mal’tsev CSPs. Our main result is that the soundness of Mal'tsev algorithm can be proved in a theory of bounded arithmetic $V^1$, which captures polynomial-time reasoning. By soundness, we mean that whenever the algorithm outputs a negative answer, the instance is indeed unsatisfiable. By the known correspondence between the theory and the extended Frege (EF) proof system, it follows that the tautologies $\neg\operatorname{CNF}(\mathcal{X}, \mathcal{A})$ encoding the non-existence of solutions for unsatisfiable instances admit polynomial-size extended Frege proofs. While for satisfiable instances $\mathcal{X}$ of CSP($\mathcal{A}$) Mal'tsev algorithm produces a homomorphism from $\mathcal{X}$ to $\mathcal{A}$ as a witness of a positive answer, for unsatisfiable instances, polynomial-size extended Frege proofs can be regarded as independent witnesses of the correctness of negative answers to $\operatorname{CSP}(\mathcal{A})$. 

In addition, we obtained an analogous result for Dalmau's algorithm~\cite{lmcs:2237}, which solves generalized majority-minority CSPs - a common generalization of near-unanimity operations and Mal’tsev operations. A $k$-ary operation on $A$ is called generalized majority-minority if for all $a,b\in A$, $\varphi(x,y,\ldots,y)= \varphi(y,x,\ldots,y)=\ldots .= \varphi(y,y,\ldots,x)= y$ or $\varphi(x,y,\ldots,y) = \varphi(y,y,\ldots,x) = x$ for all $x,y\in\{a,b\}$. Dalmau's algorithm is very similar to the Mal’tsev algorithm, and its formalization requires only minor adjustments. 

The structure of the paper is as follows. Sections \ref{prelimal} and \ref{alka;hfsdg} review essential concepts from universal algebra, introduce Mal’tsev and generalized majority-minority (GMM) algebras, and present key generating theorems. Sections \ref{as;ldhkgf;} and \ref{al;sdkgh;lggggg} define constraint satisfaction problems (CSPs), with particular focus on Mal’tsev CSPs, and outline the core ideas behind the Mal’tsev algorithm. In Section \ref{==-asl;iyfg;iy}, we provide background from proof complexity and bounded arithmetic, and explain the correspondence between these two areas. Section \ref{asdkjfg;lkg} introduces several general auxiliary functions and relations definable within theory $V^1$. Section \ref{alskdyf;kajdsghy;kgj} is dedicated to the formalization of the Mal’tsev algorithm and is structured in three main parts. Sections \ref{alskdj;lkys}, \ref{askfdh;qweyi}, and \ref{lh;alkdhg;lih;} establish the framework, formalize the necessary concepts from universal algebra, and describe the correspondence between relational structures and algebras. Section \ref{alskhf;shgf} contains the formalization of all algorithmic subroutines along with proofs of their correctness. In Section \ref{alskdjf';lahdg'a}, we formulate and prove the main universal statement and derive the upper bound for Mal’tsev CSPs. Finally, Section \ref{alskdyf;kajdsghy;kgj} extends these results to Dalmau's algorithm and states the corresponding upper bound for GMM CSPs.

\section{Preliminaries}

\subsection{Basic universal algebra notions and results}\label{prelimal}

This subsection draws on results from \cite{10.1145/2677161.2677165} and \cite{barto_et_al}. Several definitions and statements are adapted from~\cite{BurrisSankappanavar1981}. 

    For any non-empty set $A$ and any natural number $n$, a function $f \colon A^n \to A$ is called an $n$-ary \emph{operation} on $A$. An \emph{algebra} $\mathbb{A} = (A, f^\mathbb{A}_1, f^\mathbb{A}_2, \ldots)$ consists of a domain $A$ together with a set of \emph{basic operations} $f^\mathbb{A}_1, f^\mathbb{A}_2, \ldots$ of fixed arities, specified by a signature $\Sigma = \{f_1, f_2, \ldots\}$. For $B\subseteq A$, $\mathbb{B}=(B,f^\mathbb{B}_1, f^\mathbb{B}_2)$ is called a \emph{subalgebra} of $\mathbb{A}$ (denoted $\mathbb{B}\leq\mathbb{A}$) if every basic operation of $\mathbb{B}$ is the restriction of the corresponding operation of $\mathbb{A}$. A \emph{power algebra} $\A^n$ is an algebra with universe $A^n$, where for every symbol $f \in \Sigma$, the operation $f^{\A^n}$ is defined coordinate-wise using $f^{\A}$. We will usually omit the superscripts on operations. A \emph{term} $t$ over a signature $\Sigma$ is a formal expression built from variables using the function symbols in $\Sigma$ through composition. Each such term defines a function $t^{\A}$ on an algebra $\mathbb{A} = (A, f^\mathbb{A}_1, f^\mathbb{A}_2, \ldots)$, known as a \emph{term operation} of $\mathbb{A}$. The collection of all term operations on $\mathbb{A}$ is referred to as the \emph{clone of term operations} of $\mathbb{A}$, and is denoted by $Clone(\mathbb{A})$. An identity is just a pair of terms $s, t$ with the symbol $\approx$ in between them: $s\approx t$. Identities of two expressions that contain exactly one occurrence of an operation symbol on each side are called identities of height $1$. An algebra $\A$ satisfies the identity $s\approx t$, written $\A\vDash s\approx t$, if for all $a_1,\ldots,a_k\in A$, $s^{\A}(a_1,\ldots,a_k) = t^{\A}(a_1,\ldots,a_k)$.

    A \emph{vocabulary} is a finite set of relational symbols $R_1, \ldots, R_n$, each with a fixed arity. A \emph{relational structure} over this vocabulary is a tuple $\mathcal{A} = (A, R_1^\mathcal{A}, \ldots, R_n^\mathcal{A})$, where $A$ is a non-empty set called the \emph{universe} of $\mathcal{A}$, and each $R_i^\mathcal{A}$ is a relation on $A$ with the same arity as the symbol $R_i$. Given relational structures $\mathcal{X}$ and $\mathcal{A}$ over the same vocabulary $R_1, \ldots, R_n$, a \emph{homomorphism} from $\mathcal{X}$ to $\mathcal{A}$ is a mapping $h \colon X \to A$ between their universes such that, for every $m$-ary relation $R_i^\mathcal{X}$ of $\mathcal{X}$ and every tuple $(x_1, \ldots, x_m) \in R_i^\mathcal{X}$, it holds that $(h(x_1), \ldots, h(x_m)) \in R_i^\mathcal{A}$.

    An $m$-ary operation $f: A^m \rightarrow A$ is said to \emph{preserve} an $n$-ary relation $R \subseteq A^n$ (equivalently, $f$ is a \emph{polymorphism} of $R$, or $R$ is \emph{invariant} under $f$) if, for all tuples $(a_{11}, \ldots, a_{1n}), \ldots, (a_{m1}, \ldots, a_{mn}) \in R$, the tuple $(f(a_{11},\ldots .,a_{m1}),\ldots,f(a_{1n},\ldots,a_{mn}))$ also belongs to $R$. Note that for an algebra $\A$ with a term operation $t$, any subalgebra $\R\leq \A^n$ is an invariant relation under $t$. Given a relational structure $\mathcal{A}$, we define $Pol(\mathcal{A})$ as the set of all operations on $A$ that act as polymorphisms of every relation in $\mathcal{A}$. Conversely, for any set of operations $O$ on $A$, we write $Inv(O)$ to denote the set of all relations on $A$ that are preserved by every operation in $O$. More generally, for any set of relations $\Gamma$ on $A$, we write $Pol(\Gamma)$ for the set of all polymorphisms of $\Gamma$.  

    A classical result in universal algebra describes a fundamental connection between algebras and relational structures.

\begin{thm}[\cite{Kibernetika}]\label{fjjduh87}
For any algebra $\mathbb{A}$ there exists relation structure $\mathcal{A}$ such that $Clone(\mathbb{A})$ $ =Pol(\mathcal{A})$.
\end{thm}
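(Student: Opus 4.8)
The plan is to take $\mathcal{A}$ to be the relational structure on the domain $A$ whose relations are all of $Inv(Clone(\mathbb{A}))$, that is, every relation on $A$ invariant under every term operation of $\mathbb{A}$, and then to establish both inclusions of the Galois correspondence between $Pol$ and $Inv$. One of them is immediate: by construction every term operation of $\mathbb{A}$ preserves each relation of $\mathcal{A}$, so $Clone(\mathbb{A}) \subseteq Pol(\mathcal{A})$.

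For the converse, fix $f \in Pol(\mathcal{A})$ of some arity $m$, and set $N := |A|^m$ (this is where finiteness of $A$ enters). List $A^m = \{\bar a^{(1)}, \dots, \bar a^{(N)}\}$ and form the $m$ tuples $r_i := (\bar a^{(1)}_i, \dots, \bar a^{(N)}_i) \in A^N$ for $i = 1, \dots, m$; these are the rows of the $m \times N$ matrix having all elements of $A^m$ as its columns. Let $R \leq \mathbb{A}^N$ be the subalgebra generated by $r_1, \dots, r_m$. Being a subalgebra of a power of $\mathbb{A}$, $R$ is invariant under every term operation of $\mathbb{A}$ — this is precisely the remark made just before the theorem — hence $R$ is one of the relations of $\mathcal{A}$, and therefore $f$ preserves $R$.

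Applying $f$ coordinatewise to $r_1, \dots, r_m \in R$ yields a tuple of $R$, namely $\bigl(f(\bar a^{(1)}), \dots, f(\bar a^{(N)})\bigr) \in R$. On the other hand, by the standard description of a generated subalgebra, the elements of $R$ are exactly the tuples $\bigl(t^{\mathbb{A}}(\bar a^{(1)}), \dots, t^{\mathbb{A}}(\bar a^{(N)})\bigr)$ obtained from $\Sigma$-terms $t$ in the variables $x_1, \dots, x_m$ (evaluate each such $t$ in $\mathbb{A}^N$ on the generators $r_1, \dots, r_m$). Hence there is a term $t$ with $t^{\mathbb{A}}(\bar a^{(j)}) = f(\bar a^{(j)})$ for every $j$; since the $\bar a^{(j)}$ run over all of $A^m$, this forces $t^{\mathbb{A}} = f$, i.e. $f \in Clone(\mathbb{A})$.

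The crux is this last step: one needs the identification of the generated subalgebra $R$ with the set of ``term tuples'' over its generators, and one must list \emph{all} of $A^m$ among the columns, so that agreement of $f$ with $t^{\mathbb{A}}$ on the listed tuples is in fact agreement everywhere on $A^m$. Finiteness of $A$ is what keeps $N$, and hence the arity of the relation $R$, finite; the projections one might worry about as generators are automatically available, since variables are terms. Everything else is routine: that $R \in Inv(Clone(\mathbb{A}))$ and that coordinatewise application of a polymorphism to tuples of a subpower stays inside that subpower.
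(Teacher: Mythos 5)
Your proof is correct. Note, however, that the paper does not prove this theorem at all: it is stated as a cited classical result (the Bodnar\v{c}uk--Kalu\v{z}nin--Kotov--Romov / Geiger theorem), so there is no in-paper argument to compare against. What you give is the standard proof of that result: realize $\mathcal{A}$ as $(A,\operatorname{Inv}(Clone(\mathbb{A})))$, and for an $m$-ary $f\in Pol(\mathcal{A})$ use the ``free'' invariant relation $R=\mathrm{Sg}_{\mathbb{A}^{N}}(r_1,\dots,r_m)$ with $N=|A|^m$ whose columns enumerate $A^m$, so that preservation of $R$ by $f$ forces $f$ to coincide with some term operation on all of $A^m$. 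All the steps you flag as the crux (the description of a generated subalgebra as the set of term tuples over the generators, the need to list every element of $A^m$ among the columns, and the use of finiteness of $A$ to keep $N$ finite) are exactly the right ones, and each is justified. The only friction with the paper is cosmetic: the paper defines a vocabulary to be a \emph{finite} set of relation symbols, whereas your $\mathcal{A}$ (like any structure witnessing this theorem for a general clone) necessarily carries infinitely many relations of unbounded arity; the theorem as cited must be read with that relaxation, since not every clone on a finite set is finitely related.
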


\subsection{Mal'tsev and Generalized majority-minority algebras}\label{alka;hfsdg}

In this section, we consider Mal’tsev algebras and generalized majority-minority algebras, and introduce several definitions and results related to them. The definitions, examples, and results are adapted from~\cite{doi:10.1137/050628957} and~\cite{lmcs:2237}.

\begin{df}[Mal’tsev term]
    A ternary operation $\mu:A^3\to A$ on a finite set $A$ is called
\emph{Mal’tsev} if it satisfies $\mu(x, y, y) =\mu(y, y, x) = x$ for all $x,y\in A$.
An algebra that admits a Mal'tsev term is called a \emph{Mal'tsev algebra}.
\end{df}

If we define a field and a multiplicative group as in~\cite{BurrisSankappanavar1981}, the operations $\mu(x, y, z) = x - y + z$ and $\mu(x, y, z) = x y^{-1} z$ respectively are examples of Mal’tsev terms.

Let $A$ be a finite set and $R \subseteq A^n$. Let $i_1, \ldots, i_k \in \{1, \ldots, n\}$. Then the \emph{projection of $R$ onto coordinates $i_1, \ldots, i_k$} is the set 
$\pi_{i_1, \ldots, i_k} R := \{(a_{i_1}, \ldots, a_{i_k}) : (a_1, \ldots, a_n) \in R\}$. For any tuple $(i,a,b)$ with $1 \leq i \leq n$, $a,b \in A$, a \emph{realization} of $(i,a,b)$ is any pair $t_a, t_b \in A^n$ such that 
$$
\pi_{1,\ldots,i-1}(t_a) = \pi_{1,\ldots,i-1}(t_b) \quad \text{and} \quad \pi_i(t_a) = a,\ \pi_i(t_b) = b.
$$
We say that the pair $t_a, t_b$ \emph{witnesses} the tuple $(i,a,b)$.

\begin{df}[Signature]\label{askdjfh;lkasf}
    We define the \emph{signature} $Sig(R)$ of a relation $R\subseteq A^n$ to be the set of all tuples $(i,a,b)$ witnessed by elements in $R$, i.e.:
    $$Sig(R)=\{(i,a,b)\in [n]\times A^2:\exists t_a,t_b\in R \text{ such that }(t_a,t_b)\text{ witnesses }(i,a,b)\}.
    $$
\end{df}

\begin{df}[Representation]
A subset $R'\subseteq R$ is called a \emph{representation} of $R$ if $Sig(R)=Sig(R')$. If, in addition,
$|R'|\leq 2|Sig(R)|$ then $R'$ is called a \emph{compact representation} of $R$. 
\end{df}

Note that every relation $R$ has a compact representation: for every element $(i,a,b) \in \operatorname{Sig}(R)$, it is enough to take only two elements $t_a, t_b$ that witness the tuple. Also, if $R'$ is a minimal representation of $R \subseteq A^n$, then $|R'| \leq 2n \cdot |A|^2$. Indeed, since $i \leq n$ and there are at most $|A|^2$ pairs $(a,b)$, we get that $|R'| \leq 2n \cdot |A|^2$.

\begin{ex}[\cite{doi:10.1137/050628957}]\label{alksdj;asjfegh}
    Let $R$ be $A^n$, and fix some $d \in A$. For any $i \in [n]$ and any $a \in A$, we define an element $e_{i,a}$ as satisfying:
$$
\pi_j(e_{i,a}) := \begin{cases}
a & \text{if } j=i, \\
d & \text{otherwise}.
\end{cases}
$$
Then, for every tuple $(i,a,b)$, the pair $(e_{i,a}, e_{i,b})$ witnesses the tuple. Consequently, the set of elements $\{e_{i,a} : i \in [n],\ a \in A\}$ is a compact representation of the relation $A^n$. 
\end{ex}

\begin{thm}[\cite{doi:10.1137/050628957}]\label{alalksdjasd}
Let $A$ be a finite set, $\A$ be an algebra over $A$ with a Mal'tsev term $\mu$ and $\R \leq \A^n$. Let $R \subseteq \R$ be a representation of $\R$. Then $\R$ is generated by $R$ using only $\mu$. 
\end{thm}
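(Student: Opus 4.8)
The goal is to show that if $R \subseteq \R \leq \A^n$ is a representation of $\R$ (i.e.\ $\operatorname{Sig}(R) = \operatorname{Sig}(\R)$), then $\langle R\rangle_\mu$, the closure of $R$ under the Mal'tsev term $\mu$, equals all of $\R$. One inclusion is immediate: since $\R$ is a subalgebra of $\A^n$, it is invariant under $\mu$, so $\langle R\rangle_\mu \subseteq \R$. The substance is the reverse inclusion, and the natural approach is to prove by induction on $i$ (from $0$ to $n$) that for every $t \in \R$, there exists $s \in \langle R\rangle_\mu$ with $\pi_{1,\ldots,i}(s) = \pi_{1,\ldots,i}(t)$. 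The base case $i=0$ is trivial (any element of $R$ works, and $R$ is nonempty as $\R$ is). Taking $i = n$ then gives $s = t \in \langle R\rangle_\mu$, completing the proof.

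For the inductive step, suppose we are given $t \in \R$ and an element $s \in \langle R\rangle_\mu$ with $\pi_{1,\ldots,i}(s) = \pi_{1,\ldots,i}(t)$; we want to fix up coordinate $i+1$. Let $a = \pi_{i+1}(s)$ and $b = \pi_{i+1}(t)$. Since $t \in \R$, the pair $(s', t)$ for a suitable $s' \in \R$ witnesses the tuple $(i+1, a, b)$ — more precisely, both $s$ and $t$ lie in $\R$ and agree on the first $i$ coordinates, so $(i+1, a, b) \in \operatorname{Sig}(\R) = \operatorname{Sig}(R)$. Hence there exist $u_a, u_b \in R$ with $\pi_{1,\ldots,i}(u_a) = \pi_{1,\ldots,i}(u_b)$, $\pi_{i+1}(u_a) = a$, and $\pi_{i+1}(u_b) = b$. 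Now form $w := \mu(u_b, u_a, s) \in \langle R\rangle_\mu$. Using the Mal'tsev identities coordinatewise: on coordinates $1,\ldots,i$ we have $u_a$ and $u_b$ equal, so $\mu(u_b, u_a, s)$ agrees with $s$ (hence with $t$) there by $\mu(x,x,z) = z$; on coordinate $i+1$ we get $\mu(b, a, a) = b = \pi_{i+1}(t)$ by $\mu(y,y,x) \ldots$ wait — one must check the pattern $\mu(b,a,a) = b$, which is exactly $\mu(x,y,y)=x$. So $w$ agrees with $t$ on the first $i+1$ coordinates, and $w \in \langle R\rangle_\mu$, closing the induction.

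The main thing to be careful about — the only real obstacle — is getting the argument of $\mu$ in the right order so that the two defining identities $\mu(x,y,y)=x$ and $\mu(y,y,x)=x$ do the intended work simultaneously on the "already-correct" block of coordinates and on the "to-be-corrected" coordinate $i+1$. One should double-check that $\pi_{1,\ldots,i}(u_a) = \pi_{1,\ldots,i}(u_b)$ really holds for the witnesses supplied by the definition of realization (it does, since a realization of $(i+1,a,b)$ requires agreement on coordinates $1,\ldots,i$), and that this matching block is what lets the first identity collapse $\mu(u_b,u_a,s)$ to $s$ there. A secondary point worth a sentence is that $\langle R\rangle_\mu$ is genuinely closed under $\mu$ and contained in $\R$, which relies on the remark in the text that a subalgebra of $\A^n$ is an invariant relation under every term operation, in particular under $\mu$. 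Everything else is a routine coordinatewise verification, so the proof is short once the correct combination $\mu(u_b, u_a, s)$ is identified.
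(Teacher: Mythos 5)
Your proposal is correct and follows essentially the same route as the paper: induction on the coordinate index, using $\operatorname{Sig}(R)=\operatorname{Sig}(\R)$ to extract witnesses $u_a,u_b\in R$, and applying $\mu$ once per step to correct the next coordinate while the Mal'tsev identities preserve the already-matched block. The only (immaterial) difference is the argument order --- you form $\mu(u_b,u_a,s)$ where the paper forms $\mu(t',t_a,t_b)$, i.e.\ $\mu(s,u_a,u_b)$ --- and the two choices work symmetrically because of the two defining identities.
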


\begin{proof}
Let $\mathbb{R}' \leq \R$ be the subalgebra of $\A^n$ generated by $R$ using $\mu$. We prove by induction on $i\leq n$ that $\pi_{1,\ldots,i}(\R') = \pi_{1,\ldots,i}(\R)$. Suppose that $t \in \R$.

For $i=1$, for any $1 \leq i \leq n$, the tuple $(i, \pi_i(t), \pi_i(t))$ is witnessed by the pair $(t, t)$. Thus, for any representation $R$ of $\R$ and any $1 \leq i \leq n$, we must have $\pi_i(R) = \pi_i(\R)$, which proves the statement.

For $i>1$, by the induction hypothesis, there exists some $t' \in \R'$ such that $\pi_{1,\ldots,i-1}(t) = \pi_{1,\ldots,i-1}(t')$. Let $a = \pi_i(t')$ and $b = \pi_i(t)$. Since $\R' \subseteq \R$, we have that $(i, a, b) \in \operatorname{Sig}(\R) = \operatorname{Sig}(R)$, as it is witnessed by $(t, t') \in \R$. Thus, there must exist a pair $(t_a, t_b) \in R$ witnessing the tuple $(i, a, b)$. Define $t'' \in \R'$ by
$$
t'' = \mu(t', t_a, t_b).
$$
Then, from $\pi_{1,\ldots,i-1}(t_a) = \pi_{1,\ldots,i-1}(t_b)$ and the fact that $\mu$ is a Mal'tsev term, we get
$$
\pi_{1,\ldots,i-1}(t'') = \pi_{1,\ldots,i-1}(\mu(t', t_a, t_b)) = \pi_{1,\ldots,i-1}(t) = \pi_{1,\ldots,i-1}(t').
$$
Additionally, from $\pi_i(t') = \pi_i(t_a) = a$ we have
$$
\pi_i(t'') = \mu(a, a, b) = b = \pi_i(t),
$$
so $\pi_{1,\ldots,i}(t'') = \pi_{1,\ldots,i}(t)$.
\end{proof}

A $k$-ary operation $\varphi: A^k \to A$ on a finite set $A$ is called a \emph{near-unanimity} operation if it satisfies $\varphi(x, y, \ldots, y) = \varphi(y, x, \ldots, y) = \ldots = \varphi(y, y, \ldots, x) = y$ for all $x, y \in A$. Generalized majority-minority (GMM) operations include and extend both near-unanimity and Mal'tsev operations. Intuitively, an operation is called GMM if, when restricted to any two-element subset of its domain, it behaves either like a near-unanimity operation or like a Mal'tsev operation.

\begin{df}[Generalized majority-minority term]
Let $k \geq 3$. A $k$-ary operation $\varphi: A^k \to A$ on a finite set $A$ is called \emph{generalized majority-minority} (GMM) if, for all $a,b \in A$,
\begin{gather*}
    \varphi(x, y, \ldots, y) = \varphi(y, x, \ldots, y) = \ldots = \varphi(y, y, \ldots, x) = y \quad \text{for all } x, y \in \{a, b\}, \\
    \text{or} \\
    \varphi(x, y, \ldots, y) = \varphi(y, y, \ldots, x) = x \quad \text{for all } x, y \in \{a, b\}.
\end{gather*}
In the first case, we say that $a, b\in A$ is a \emph{majority pair} for $\varphi$, and in the second case $a, b\in A$ is a \emph{minority pair}.
\end{df}

\begin{df}[Signature relative to a GMM operation]
Let $\varphi$ be a $(k+1)$-ary GMM operation on $A$. We define the \emph{signature} $\operatorname{Sig_{GMM}}(R)$ of a relation $R\subseteq A^n$ (not
necessarily invariant under $\varphi$) \emph{relative to $\varphi$} to be the set of all tuples $(i, a, b)$ such that $\{a, b\}$ is a minority pair, witnessed by elements in $R$, i.e.,
\begin{gather*}
    \operatorname{Sig}(R) = \{(i, a, b) \in [n] \times A^2 :\ \{a, b\} \text{ is a minority pair,} \\\text{and } \exists\, t_a, t_b \in R \text{ such that } (t_a, t_b) \text{ witnesses } (i, a, b)\}.
\end{gather*}
\end{df}

\begin{df}[Representation relative to a GMM operation]
Let $\varphi$ be a $(k+1)$-ary GMM operation on $A$. A subset $R' \subseteq R$ is called a \emph{representation} of a relation $R\subseteq A^n$ (not
necessarily invariant under $\varphi$) \emph{relative to} $\varphi$ if $\operatorname{Sig_{GMM}}(R) = \operatorname{Sig_{GMM}}(R')$ and $\pi_I(R) = \pi_I(R')$ for every $I \subseteq \{1, \ldots, n\}$ with $|I| \leq k$. If, in addition,
\[
|R'| \leq 2|\operatorname{Sig}(R)| + \sum_{|I| \leq k} |\pi_I(R)|,
\]
then $R'$ is called a \emph{compact representation} of $R$.
\end{df}

Note that if $R'$ is a minimal representation, then
$$|R'| \leq 2n \cdot |A|^2 + \sum_{i=1}^k( \binom{n}{i} \cdot |A|^i).$$

\begin{ex}[\cite{lmcs:2237}]\label{++++____((JHJHJH} Let $R$ be $A^n$, and fix some $d\in A$. Let $\varphi$ be a $(k+1)$-ary GMM operation. We will construct a representation $R'$ of $R$ relative to $\varphi$. Initially $R'$ is empty. First, observe that $\operatorname{Sig_{GMM}}(R)$ contains all $(i,a,b)$ in $[n]\times A^2$ where $\{a,b\}$ is a minority pair. For each tuple $(i,a,b)$ in $\operatorname{Sig_{GMM}}(R)$ we add to $R'$ two elements $e_{i,a}, e_{i,b}$ from Example \ref{alksdj;asjfegh} since $(e_{i,a}, e_{i,b})$ witnesses this tuple. Hence, $\operatorname{Sig_{GMM}}(R')=\operatorname{Sig_{GMM}}(R)$. In a second step we add for each $i_1,\ldots,i_j$ with $j\leq k$ and $i_1<i_2<\ldots <i_j$, and every $a_1,\ldots,a_j\in A$ the element $e^{i_1\ldots i_j}_{a_1\ldots a_j}$ that has $a_l$ on its $i_l$th coordinate for each $l\in \{1,\ldots,j\}$ and $d$ elsewhere. 

It is easy to verify that we obtain
a relation $R'$ such that for all $I\subseteq \{1,\ldots,n\}$ with $|I|\leq k$, $\pi_IR' = A^{|I|} = \pi_IR$, and hence $R'$ is a compact representation of $R$. 
    
\end{ex}

\begin{thm}[\cite{lmcs:2237}]\label{(()()DJKGSKGLdlkjh}
Let $A$ be a finite set, $\A$ be an algebra over $A$ with a $(k+1)$-ary GMM term $\varphi$ and $\R \leq \A^n$. Let $R\subseteq \R$ be a representation of $\R$ relative to $\varphi$. Then $\R$ is generated by $R$ using only $\varphi$.
\end{thm}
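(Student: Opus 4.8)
Let $\R'$ be the subalgebra of $\A^n$ generated by $R$ using only $\varphi$. Since $R \subseteq \R$ and $\R$ is invariant under $\varphi$, we have $\R' \subseteq \R$, so it remains to prove $\R \subseteq \R'$. Following the template of Theorem \ref{alalksdjasd}, the plan is to show by induction on $i \leq n$ that $\pi_{1,\ldots,i}(\R') = \pi_{1,\ldots,i}(\R)$. The base case covers all $i \leq k$ at once: for such $i$ we have $|\{1,\ldots,i\}| \leq k$, so the definition of a representation relative to $\varphi$ gives $\pi_{1,\ldots,i}(R) = \pi_{1,\ldots,i}(\R)$, and since $R \subseteq \R' \subseteq \R$ all three projections coincide.

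For the inductive step, fix $i > k$ and $t \in \R$. By the induction hypothesis there is $t' \in \R'$ with $\pi_{1,\ldots,i-1}(t') = \pi_{1,\ldots,i-1}(t)$; put $a = \pi_i(t')$ and $b = \pi_i(t)$. If $a = b$ we are done, so assume $a \neq b$. The task is to correct coordinate $i$ from $a$ to $b$ inside $\R'$, and the argument splits according to whether the pair $\{a,b\}$ is a minority pair or a majority pair for $\varphi$.

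In the minority case I would use the derived ternary term $p(x,y,z) := \varphi(x,\underbrace{y,\ldots,y}_{k-1},z)$, which is idempotent and satisfies $p(x,y,y) = p(y,y,x) = x$ whenever $x,y$ lie in a common minority pair. The pair $(t',t)$ witnesses $(i,a,b) \in \operatorname{Sig_{GMM}}(\R) = \operatorname{Sig_{GMM}}(R)$, so some $(s_a,s_b) \in R \subseteq \R'$ witnesses $(i,a,b)$, and $t''$ is built by a Mal'tsev-style combination of $t'$ with $s_a,s_b$, exactly as in Theorem \ref{alalksdjasd} but with $p$ in place of $\mu$. One complication absent from the genuine Mal'tsev setting is that $p(x,y,y)=x$ holds only on minority pairs, so this combination need not leave coordinates $1,\ldots,i-1$ fixed; I would handle this by a secondary left-to-right induction that first transports the realization $(s_a,s_b)$ so that its common prefix on $\{1,\ldots,i-1\}$ equals $\pi_{1,\ldots,i-1}(t)$, after which idempotency of $p$ preserves the prefix and $\pi_i(t'') = p(a,a,b) = b$, as required.

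In the majority case the signature carries no information about $(i,a,b)$, but $\varphi$ restricted to $\{a,b\}$ is a $(k+1)$-ary near-unanimity operation; here I would instead invoke the other half of the definition of a representation relative to $\varphi$ --- namely $\pi_I(R) = \pi_I(\R)$, hence $\pi_I(\R') = \pi_I(\R)$, for every coordinate set $I$ with $|I| \leq k$ --- and run a Baker--Pixley / $k$-decomposability argument localized to coordinate $i$: one iteratively applies $\varphi$ to $t'$ together with suitable tuples of $\R'$ that already agree with $t$ on a growing prefix and carry the value $b$ on coordinate $i$, the near-unanimity identities on $\{a,b\}$ forcing coordinate $i$ to $b$ while idempotency preserves the coordinates on which all inputs already agree. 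This again needs a nested induction, on the length of the prefix brought into agreement. I expect the majority case --- together with the shared bookkeeping of tracking which coordinates stay protected under repeated applications of $\varphi$ --- to be the main obstacle: unlike the plain Mal'tsev argument, where a single application of $\mu$ corrects the new coordinate and preserves all earlier ones because $\mu(x,y,y)=x$ is an identity, here the relevant operations satisfy their defining identities only on the pair $\{a,b\}$, which is precisely why the stronger requirement in the definition of a GMM-representation (projection-completeness on all $\leq k$-element coordinate sets, not merely signature agreement) is indispensable.
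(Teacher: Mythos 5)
Your outline coincides exactly with the one the paper gives for this theorem: induction on $i$ from $k$ to $n$, with the base case $i\le k$ supplied by the condition $\pi_I(R)=\pi_I(\R)$ for $|I|\le k$, followed by a case split at coordinate $i$ according to whether $\{a,b\}$ is a minority or a majority pair. Note, however, that the paper itself does not carry this out --- it states only this outline and defers all details to the original reference --- so there is no in-paper proof to match yours against beyond the skeleton, which you reproduce faithfully.

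That said, measured against a complete argument, your proposal leaves the two decisive steps as declared intentions rather than proofs, and the minority-case plan as stated is close to circular. You propose to ``first transport the realization $(s_a,s_b)$ so that its common prefix on $\{1,\ldots,i-1\}$ equals $\pi_{1,\ldots,i-1}(t)$''; but a pair in $\R'$ witnessing $(i,a,b)$ whose common prefix already equals $\pi_{1,\ldots,i-1}(t)$ contains, as its second component, precisely the element $t''$ you are trying to construct, so the transport step \emph{is} the inductive step, not a reduction of it. The difficulty you correctly diagnose is real: $p(x,y,z)=\varphi(x,y,\ldots,y,z)$ satisfies $p(u,v,v)=u$ only when $\{u,v\}$ is a majority pair (on a minority pair it returns $v$), so a single application, or a naive iteration, of $p(t',s_a,s_b)$ does not protect the prefix; the actual argument in the cited source resolves this by a different combination of applications of $\varphi$ on the minority and majority coordinates, not by first normalizing the witness pair. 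Likewise, the majority case needs more than an appeal to ``a Baker--Pixley argument'': $\varphi$ is near-unanimity only on $\{a,b\}$, so the standard $k$-decomposability machinery does not apply off the shelf, and the interaction with the projection condition has to be made explicit. In short: right decomposition, right identification of where the work is, but the work itself is not done.
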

The proof of Theorem~\ref{(()()DJKGSKGLdlkjh}, presented in~\cite{lmcs:2237}, is very similar to the proof of Theorem \ref{alalksdjasd}. It proceeds by induction on $k \leq i \leq n$ and, based on the properties of a GMM operation, performs a case distinction for each pair $a, b \in A$ appearing in the projection onto the $i$-th coordinate - depending on whether the pair forms a majority or a minority pair. For details, we refer the reader to the original paper. 

\begin{rem}\label{alkdshfg;asgh}
  We call a $k$-ary operation $f$ on $A$ \emph{idempotent}, if for all $a\in A$, $f(a,a,\ldots,a) = a$. Note that both Mal'tsev and GMM operation are idempotent. 
\end{rem}

\subsection{Constraint satisfaction problems}\label{as;ldhkgf;}

In this section, we present two equivalent definitions of the Constraint Satisfaction Problems (CSPs), define Mal'tsev and GMM CSPs and comment on their complexity. Several definitions, examples, and results have been adapted from \cite{barto_et_al}, \cite{doi:10.1137/050628957} and \cite{https://doi.org/10.48550/arxiv.2210.07383}.

\begin{df}[Constraint satisfaction problem]
     An \emph{instance of a constraint satisfaction problem (CSP)} \color{black} is defined as a triple $\mathcal{P} = (X,A,C)$, where
\begin{itemize}
    \item $X = \{x_1,\ldots,x_{n}\}$ is a finite set of variables,
    \item $A$ is a finite domain of values,
    \item $C$ is a finite set of constraints $C=\{C_1,\ldots,C_m\}$, where each constraint $C_i$ is a pair $C_i = (x^i, R_i)$, with:
\begin{itemize}
\item $x^i = (x_1, \ldots, x_s)$ an $s$-tuple of distinct variables, called the \emph{scope} of $C_i$, and
\item $R_i$ an $s$-ary relation on $A$, called the \emph{constraint relation} of $C_i$.
\end{itemize} 
\end{itemize}
The decision problem for CSP asks whether there exists a \emph{solution} to $\mathcal{P}$, that is, an assignment $f:X\rightarrow A$ such that, for each constraint $C_i=(x^i,R_i)$, the tuple $f(x^i)=(f(x_1),\ldots,f(x_s))$ belongs to $R_i$. If such a solution exists, the instance is called \emph{satisfiable}; otherwise, it is called \emph{unsatisfiable}.
\end{df}

A \emph{constraint language} $\Gamma$ is a set of relations over a finite domain $A$. A constraint satisfaction problem over a constraint language $\Gamma$, denoted by CSP($\Gamma$), is a subclass of the general CSP framework in which every constraint relation used in any instance must be taken from $\Gamma$.

An equivalent formulation of CSP can also be given in terms of homomorphisms between relational structures, with the target structure being fixed.

\begin{df}
    [CSP as a homomorphism problem \cite{BULATOV200531}] $\,$
Let $\mathcal{A}$ be a relational structure over a vocabulary $R_1, \ldots, R_n$. In the \emph{constraint satisfaction problem} associated with $\mathcal{A}$, denoted by CSP($\mathcal{A}$), the question is: given a structure $\mathcal{X}$ over the same vocabulary, does there exist a homomorphism from $\mathcal{X}$ to $\mathcal{A}$? If the answer is positive, we call the instance $\mathcal{X}$ \emph{satisfiable}; otherwise, it is \emph{unsatisfiable}. We refer to $\mathcal{A}$ as the \emph{target structure} and to $\mathcal{X}$ as the \emph{instance (or input) structure}.
\end{df}

The equivalence is straightforward. The translation from the homomorphism form to the constraint form works as follows: interpret the universe $X$ of the structure $\mathcal{X}$ as a set of variables, and treat each tuple $(x_1, \ldots, x_m) \in R^\mathcal{X}$ as a constraint of the form $C = (x_1, \ldots, x_m; R^\mathcal{A})$. Conversely, to move from the constraint form back to the homomorphism form, take the set of variables $X$ as the universe of the instance structure, the domain $A$ as the universe of the target structure, and interpret each constraint $C = (x_1, \ldots, x_m; R)$ as a tuple in a relation $R^\mathcal{X}$ defined over $X$.

It is easy to see that, in its full generality, the CSP is an NP-complete problem. In~\cite{8104069} and~\cite{10.1145/3402029}, it was proved that for every finite constraint language $\Gamma$, CSP($\Gamma$) is either NP-complete or solvable in polynomial time.

\begin{thm}[CSP dichotomy theorem \cite{8104069}, \cite{10.1145/3402029}]
    For each finite constraint language $\Gamma$ over a finite domain $A$, CSP ($\Gamma$) is either polynomial time or NP-complete.    
\end{thm}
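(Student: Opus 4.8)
The plan is to pass from the combinatorial language $\Gamma$ to the algebraic object that governs its complexity, and to establish the two halves of the dichotomy separately, showing that exactly one of the two cases must occur. First I would invoke the standard algebraic reductions: by the Galois connection between clones and relational clones (Theorem~\ref{fjjduh87} and its refinements), $\operatorname{CSP}(\Gamma)$ is polynomially equivalent to $\operatorname{CSP}(\Gamma')$ whenever $\Gamma'$ is pp-definable from $\Gamma$ (equivalently $Pol(\Gamma) \subseteq Pol(\Gamma')$), and more generally pp-interpretations and pp-constructions preserve the tractable/NP-complete distinction up to log-space reductions. Using these, one reduces to the case where $\Gamma$ is a core and contains all singleton unary relations $\{a\}$, $a \in A$; then the associated algebra $\mathbb{A}_\Gamma = (A, Pol(\Gamma))$ is idempotent, and the complexity of $\operatorname{CSP}(\Gamma)$ is controlled by the variety it generates.

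For the hardness side, I would show that if $\mathbb{A}_\Gamma$ is \emph{not} Taylor — equivalently, if some subalgebra has a two-element quotient all of whose polymorphisms are projections, equivalently if $\mathbb{A}_\Gamma$ pp-interprets the structure $(\{0,1\};\ \text{all relations})$ — then $\operatorname{CSP}(\Gamma)$ pp-interprets a classical NP-complete problem such as positive $1$-in-$3$-SAT or NOT-ALL-EQUAL-$3$-SAT, whence $\operatorname{CSP}(\Gamma)$ is NP-complete by Schaefer's theorem together with the reduction machinery above. The usable characterisations of the Taylor condition — a weak near-unanimity term, or a cyclic term of prime arity exceeding $|A|$ — come from Taylor's theorem on idempotent varieties omitting the unary type, refined by Mar\'oti--McKenzie and Barto--Kozik.

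For the tractability side — which is the deep content and the main obstacle — I would assume $\mathbb{A}_\Gamma$ has a weak near-unanimity polymorphism and construct a polynomial-time algorithm, splitting according to the structure of the subdirect representations of powers of $\mathbb{A}_\Gamma$. When the variety has \emph{few subpowers} (equivalently, admits an edge term), a Bulatov--Dalmau style algorithm based on compact representations solves the problem — this is precisely the regime whose Mal'tsev instance is formalised later in this paper. When $\mathbb{A}_\Gamma$ has \emph{bounded width} (no affine quotient on any subalgebra), $(2,3)$-consistency / local consistency suffices, by Barto--Kozik's absorption-theoretic analysis. The genuinely hard case mixes these behaviours and requires a recursive decomposition: peeling off tractable pieces via absorbing subalgebras, centralisers, and — following Zhuk — the calculus of \emph{strong subalgebras} (binary absorbing, central, and polynomially-complete-quotient subalgebras) to propagate constraints, or — following Bulatov — the coloured-graph analysis of the algebra and its thin edges. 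I would follow one of these two independent proofs \cite{8104069}, \cite{10.1145/3402029} verbatim; the hardest part is establishing the structural lemmas on absorption and strong subalgebras that make the decomposition both sound (no solution is lost) and polynomially terminating.
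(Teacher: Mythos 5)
The paper does not prove this theorem: it is imported as a black box from Bulatov \cite{8104069} and Zhuk \cite{10.1145/3402029}, and the only related material in the paper is the much more modest Theorem~\ref{fjjduh87} (the clone/polymorphism correspondence) and the later formalization of the Bulatov--Dalmau algorithm for the special Mal'tsev case. So there is no in-paper proof to compare yours against.

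As a standalone attempt, your outline is an accurate map of how the published proofs are organized --- reduction to idempotent cores via pp-constructions, NP-hardness when the core algebra is not Taylor (via pp-interpretation of a Schaefer-hard Boolean template), the equivalent weak-near-unanimity and cyclic-term characterizations, and the identification of few subpowers and bounded width as the two ``pure'' tractable regimes. But it is a roadmap, not a proof. The entire difficulty of the theorem lives in the step you defer with ``I would follow one of these two independent proofs verbatim'': the recursive decomposition in the mixed case, the soundness of the reduction steps (that restricting to strong/absorbing subalgebras loses no solutions), and the polynomial termination bound each occupy the bulk of a long paper and cannot be treated as routine. Two smaller points: the hardness direction needs the fact that pp-interpretations (not just pp-definitions) yield log-space reductions, which is a theorem of Barto--Opr\v{s}al--Pinsker rather than a refinement of Theorem~\ref{fjjduh87}; and the claim that the non-Taylor case always pp-interprets $3$-SAT-like templates is exactly the content of the Taylor/Bulatov--Jeavons--Krokhin hardness criterion, which itself requires tame-congruence-theoretic input you have only named. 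None of this makes your sketch wrong --- it is the standard architecture --- but as written it establishes the dichotomy only modulo the cited papers, which is precisely the status the theorem already has in this paper.
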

This result was made possible by the established correspondence between relational structures and algebras (for the history of the theorem and earlier results, see, for example, \cite{barto_et_al} and \cite{1}). In fact, for a constraint language $\Gamma$, the computational complexity of CSP($\Gamma$) depends only on $\operatorname{Pol}(\Gamma)$. Thus, tractable subclasses of CSPs, as well as the algorithms used to solve them, can be characterized by specific polymorphisms of the corresponding constraint languages. For a constraint language $\Gamma$ over $A$, we say that CSP($\Gamma$) is \emph{Mal'tsev} if there exists a Mal'tsev term on $A$ that preserves every relation in $\Gamma$.

\begin{ex}
    An instance of the $3$-LIN($p$) problem is a system of linear equations $A \cdot \bm{x} = \bm{b}$ over the field $\mathbb{Z}_p$, where each equation involves exactly three variables. If $R$ is the solution space of $A \cdot \bm{x} = \bm{b}$, then the operation $\mu(x, y, z) = x - y + z$ is a polymorphism of $R$. Indeed, suppose that $\bm{x}, \bm{y}, \bm{z} \in R$, then
$$
A \cdot \mu(\bm{x}, \bm{y}, \bm{z}) = A \cdot (\bm{x} - \bm{y} + \bm{z}) = A \cdot \bm{x} - A \cdot \bm{y} + A \cdot \bm{z} = \bm{b} - \bm{b} + \bm{b} = \bm{b}.
$$
\end{ex}

\begin{ex}
    In the general subgroup problem, the domain is a finite group $\mathbb{G}$, and the relations are of the form $gH$, where $H \leq \mathbb{G}^n$ is a subgroup and $g \in \mathbb{G}^n$ (i.e., subgroups and their cosets in $\mathbb{G}^n$). If $U = gH$ is a coset of $\mathbb{G}^n$, then the operation $\mu(x, y, z) = xy^{-1}z$ preserves $U$, since
$$
UU^{-1}U = gH(gH)^{-1}gH = gH H g^{-1} gH = g H H H = gH = U.
$$
\end{ex}

In \cite{articlestdfh} Bulatov and Dalmau proved that Mal'tsev CSPs are solvable in polynomial time by presenting an explicit algorithm, which we call Mal'tsev algorithm. 
\begin{thm}[\cite{articlestdfh}]
    Let $A$ be a finite set and $\mu$ be a Mal’tsev operation on $A$. Then CSP($Inv(\mu)$), where $Inv(\mu)$ is the set of all relations on $A$ invariant under $\mu$, is solvable in polynomial time.
\end{thm}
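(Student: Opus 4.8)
The plan is to describe the Bulatov--Dalmau algorithm explicitly and argue its correctness and polynomial running time. The central data structure is a \emph{compact representation} of the solution set. Given an instance $\mathcal{P}=(X,A,C)$ with $|X|=n$ and constraints $C_1,\dots,C_m$, let $\mathcal{S}$ denote the set of all solutions, viewed as a subset of $A^n$; since every constraint relation is invariant under $\mu$, and intersections and projections preserve invariance, $\mathcal{S}$ is (the universe of) a subalgebra $\mathbb{S}\leq\mathbb{A}^n$. The algorithm processes the constraints one at a time, maintaining after step $t$ a compact representation $R_t\subseteq A^n$ of the subalgebra $\mathbb{S}_t=\bigcap_{i\leq t}\mathbb{S}(C_i)$, where $\mathbb{S}(C_i)\leq\mathbb{A}^n$ is the set of tuples satisfying constraint $C_i$ alone. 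We start from a compact representation $R_0$ of $A^n$, for instance the set $\{e_{i,a}:i\in[n],\,a\in A\}$ from Example~\ref{alksdj;asjfegh}, which has size $n|A|$. At the end, $R_m$ is a compact representation of $\mathcal{S}$; by Theorem~\ref{alalksdjasd}, $\mathcal{S}\neq\emptyset$ if and only if $R_m\neq\emptyset$, and if nonempty any element of $R_m$ is an explicit solution.

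The work in each step is to pass from a compact representation $R_{t-1}$ of $\mathbb{S}_{t-1}$ to a compact representation $R_t$ of $\mathbb{S}_{t-1}\cap\mathbb{S}(C_t)$. First one needs a subroutine \textsc{NonEmpty} that, given a compact representation of a subalgebra $\mathbb{B}\leq\mathbb{A}^n$ and a partial assignment fixing a prefix of coordinates, decides whether some tuple of $\langle R\rangle_\mu=\mathbb{B}$ extends that assignment, and if so produces one; this is carried out coordinate by coordinate, at each coordinate using the witnessing pairs stored in the representation together with $\mu$ exactly as in the proof of Theorem~\ref{alalksdjasd} to ``correct'' the current partial tuple. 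Second, one computes the signature of $\mathbb{S}_{t-1}\cap\mathbb{S}(C_t)$: for each candidate triple $(i,a,b)$ one must test whether it is witnessed, and find a witnessing pair; this is again reduced to \textsc{NonEmpty} queries on $\langle R_{t-1}\rangle_\mu$ intersected with the fixed coordinates imposed by $C_t$. Since the scope of $C_t$ has bounded arity, enumerating the finitely many local constraints that $C_t$ imposes on its coordinates is cheap, and $\langle R_{t-1}\rangle_\mu$ — although possibly exponential — is never written out: all queries are answered by generating, on demand, only polynomially many $\mu$-products of elements of $R_{t-1}$. Collecting one witnessing pair for each element of the new signature yields $R_t$ with $|R_t|\leq 2|\mathrm{Sig}(\mathbb{S}_t)|\leq 4n|A|^2$, so the representations stay small throughout; Theorem~\ref{alalksdjasd} guarantees $\langle R_t\rangle_\mu=\mathbb{S}_t$, i.e.\ the invariant is maintained.

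For the complexity bound: there are $m$ steps; within a step the signature has $O(n|A|^2)$ entries, each requiring an \textsc{NonEmpty} call; each such call runs in $n$ coordinate-stages, each stage performing a bounded search over the $O(n|A|^2)$ stored witnessing pairs and $O(1)$ evaluations of $\mu$. This gives a crude polynomial bound of roughly $O(m\cdot n^3\cdot |A|^{O(1)})$ operations — the exact exponent is unimportant for the statement — and all intermediate objects have polynomial size, so the whole procedure runs in time polynomial in $n$, $m$, and $|A|$ (with $|A|$ and the arities regarded as constants for a fixed template, this is polynomial in the input).

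The main obstacle, and the conceptual heart of the argument, is the correctness of the incremental step: one must show that it suffices to keep only a compact representation rather than the full (exponential) subalgebra, and that a compact representation of an intersection $\mathbb{B}\cap\mathbb{S}(C)$ can be recovered from a compact representation of $\mathbb{B}$ alone. This rests entirely on Theorem~\ref{alalksdjasd} — a representation generates the whole subalgebra under $\mu$ — which lets one answer membership/extension queries about $\mathbb{B}$ using only $R_{t-1}$, and on the fact that adding the constraint $C_t$ only restricts a bounded set of coordinates, so the new signature is computable by finitely many such queries. Verifying carefully that the witnessing pairs produced indeed lie in $\mathbb{S}_t$ and that no signature entry is missed is the technical core; the running-time analysis is then routine bookkeeping.
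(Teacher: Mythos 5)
Your proposal follows essentially the same route as the paper, which states this theorem as a citation to \cite{articlestdfh} and sketches the argument in Section~\ref{al;sdkgh;lggggg}: maintain a compact representation $R_l$ of the solution set of the first $l$ constraints, update it constraint-by-constraint via a {\sc Next} procedure built from {\sc NonEmpty}- and prefix-fixing subroutines, and conclude satisfiability/unsatisfiability from $R_m$ via Theorem~\ref{alalksdjasd}. The one imprecision is that you fold the prefix-fixing procedure ({\sc Fix-values}) and the bounded-coordinate {\sc NonEmpty} into a single subroutine and assert without justification that ``only polynomially many $\mu$-products'' suffice; the actual reason is that {\sc NonEmpty} only ever needs the closure of a projection onto $O(1)$ coordinates, which stabilizes after at most $|A|^{O(1)}$ rounds, while agreement on long prefixes is handled separately by iterating compact representations coordinate by coordinate.
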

Note that $\operatorname{Inv}(\mu)$ is an infinite object, while in the classical definition, we consider only finite constraint languages $\Gamma \subseteq \operatorname{Inv}(\mu)$. 

Analogously, we define CSP($\Gamma$) to be GMM, if there exists a GMM term preserving every relation in $\Gamma$. The motivation for combining near-unanimity (NU) algebras and Mal'tsev algebras into a single CSP subclass is that, in both types of algebras, every subalgebra of $\mathbb{A}^n$ has a small generating set. In \cite{lmcs:2237} Dalmau presented a polynomial-time algorithm for GMM CSPs. 

\begin{thm}[\cite{lmcs:2237}]
    Let $A$ be a finite set and $\varphi$ be a GMM operation on $A$. Then CSP($Inv(\varphi)$), where $Inv(\varphi)$ is the set of all relations on $A$ invariant under $\varphi$, is solvable in polynomial time.
\end{thm}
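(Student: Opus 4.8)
The plan is to adapt the Bulatov--Dalmau Mal'tsev algorithm from \cite{articlestdfh}, replacing \emph{compact representations} throughout by \emph{compact representations relative to} $\varphi$, and replacing the generating Theorem~\ref{alalksdjasd} by Theorem~\ref{(()()DJKGSKGLdlkjh}. Fix an instance of CSP($Inv(\varphi)$) with variables $x_1,\dots,x_n$ and constraints $C_1,\dots,C_m$; only finitely many relations from $Inv(\varphi)$ occur, each presented explicitly in the input. Write $C_\ell^{\uparrow}=\{t\in A^n:\pi_{x^\ell}(t)\in R_\ell\}$ for the $\ell$-th constraint relation lifted to all $n$ coordinates, so that the solution set is $\operatorname{Sol}=\bigcap_{\ell=1}^m C_\ell^{\uparrow}$. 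Since each $R_\ell\in Inv(\varphi)$, each $C_\ell^{\uparrow}$ is invariant under $\varphi$ applied coordinatewise, and hence so is every partial intersection $\operatorname{Sol}^{(\ell)}=\bigcap_{j\le\ell}C_j^{\uparrow}\le\A^n$.

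The algorithm maintains, for $\ell=0,1,\dots,m$, a compact representation $R^{(\ell)}$ of $\operatorname{Sol}^{(\ell)}$ relative to $\varphi$. For $\ell=0$ we have $\operatorname{Sol}^{(0)}=A^n$, for which a compact representation is written down explicitly in Example~\ref{++++____((JHJHJH}. The step from $R^{(\ell-1)}$ to $R^{(\ell)}$ intersects the current relation with $C_\ell^{\uparrow}$, i.e.\ forces the tuple of coordinates $x^\ell$ to lie in $R_\ell$. Using the standard pp-definition $\R\cap C_\ell^{\uparrow}=\pi_{1,\dots,n}\bigl(\{(t,u)\in\R\times R_\ell:\pi_{x^\ell}(t)=u\}\bigr)$, this reduces to a bounded number of elementary operations on compact representations: taking products with the explicitly given relation $R_\ell$, imposing binary equality constraints between coordinates, projecting away coordinates, and, underlying all of these, the subroutine $\mathsf{Fix}(R,i,a)$ which, given a compact representation $R$ of a subalgebra $\S\le\A^n$, returns a compact representation of $\{t\in\S:\pi_i(t)=a\}$. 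In $\mathsf{Fix}$ one generates from $R$, using only $\varphi$, a finite set of tuples of $\S$ realizing every element of the signature of the target relation together with every projection of the target onto at most $k$ coordinates, and then prunes down to a compact representation. Correctness of this step is precisely Theorem~\ref{(()()DJKGSKGLdlkjh}: the tuples obtainable from $R$ by iterating $\varphi$ already exhaust $\S$, so no element of the restricted relation, and in particular no element of its signature or of any of its low-arity projections, is lost.

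Polynomiality rests on two facts. First, every compact representation relative to $\varphi$ has at most $2n|A|^2+\sum_{i=1}^{k}\binom{n}{i}|A|^i$ tuples (with $k+1$ the arity of $\varphi$), which is polynomial in $n$ since $|A|$ and $k$ are fixed. Second, a single $\mathsf{Fix}$ call, and hence a single constraint-processing step, applies $\varphi$ only polynomially many times to the current polynomial-size representation and performs polynomially much bookkeeping on signatures and on $\le k$-ary projections; since there are $m$ constraints, each triggering a bounded number of such elementary operations, the entire run is polynomial time. Finally, the algorithm reports ``satisfiable'' if and only if $R^{(m)}\neq\emptyset$: because $\varphi$ is idempotent (Remark~\ref{alkdshfg;asgh}), the subalgebra generated by $R^{(m)}$ under $\varphi$ is empty if and only if $R^{(m)}=\emptyset$, so by Theorem~\ref{(()()DJKGSKGLdlkjh} this correctly decides whether $\operatorname{Sol}\neq\emptyset$; and when $R^{(m)}\neq\emptyset$, any tuple in $R^{(m)}\subseteq\operatorname{Sol}$ is a genuine solution, yielding a witness for satisfiable instances.

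The main obstacle is the $\mathsf{Fix}$ subroutine in the GMM setting. In the pure Mal'tsev case a representation is pinned down by its signature alone, whereas a representation relative to $\varphi$ must additionally record all projections onto at most $k$ coordinates, to account for majority pairs. Consequently, after fixing a coordinate one must regenerate, using $\varphi$, not only witnesses for the updated signature but also witnesses for every $\le k$-ary projection of the restricted relation, run the majority/minority case analysis for each pair $a,b\in A$ appearing in a coordinate (exactly as in the induction proving Theorem~\ref{(()()DJKGSKGLdlkjh}), and verify that the size bound survives the bounded chain of elementary fixes used to implement one constraint intersection. Establishing that this extra data remains consistent and polynomially small throughout is the delicate point; it is carried out in full by Dalmau in~\cite{lmcs:2237}, to which we refer for the details.
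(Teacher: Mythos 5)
Your sketch follows exactly the strategy the paper attributes to Dalmau (and outlines for the Mal'tsev case in Section~\ref{al;sdkgh;lggggg}): iteratively maintain a compact representation relative to $\varphi$ of each partial solution set, starting from the explicit representation of $A^n$ in Example~\ref{++++____((JHJHJH} and using Theorem~\ref{(()()DJKGSKGLdlkjh} to justify that no signature element or $\le k$-ary projection is lost, with emptiness of the final representation deciding satisfiability. The paper itself gives no proof of this theorem beyond the citation to \cite{lmcs:2237}, and your account correctly isolates the one genuinely GMM-specific difficulty (regenerating the $\le k$-ary projection data alongside the minority-pair signature in the fixing subroutine) while deferring its verification to the same reference, so this is consistent with and at least as detailed as the paper's treatment.
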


\subsection{The idea of Mal'tsev Algorithm}\label{al;sdkgh;lggggg}

The algorithm developed by Bulatov and Dalmau for solving Mal'tsev CSPs in \cite{doi:10.1137/050628957} based on the fact that any relation invariant under a Mal’tsev term can be generated from its compact representation. In this section, we give the idea of Mal'tsev algorithm. For a detailed description of the algorithm, we refer the reader to the original paper. 

Consider a CSP instance $\mathcal{P} = (X, A, C) = (\{x_1, \ldots, x_n\}, A, \{C_1, \ldots, C_m\})$, where every constraint relation in $C$ is invariant under a Mal'tsev term $\mu$. For each $l \in \{0, \ldots, m\}$, we define the instance $\mathcal{P}_l$ as the CSP instance consisting of the first $l$ constraints of $\mathcal{P}$, that is, $\mathcal{P}_l = (X, A, \{C_1, \ldots, C_l\})$. We denote by $\R_l$ the $n$-ary relation on $A$ defined by:
$$
\R_l = \{(s(x_1), \ldots, s(x_n)) : s \text{ is a solution of } \mathcal{P}_l\}.
$$

In essence, the algorithm constructs a compact representation $R_l$ of $\R_l$ for each $l \in \{0, \ldots, m\}$. In the base case $l = 0$, the instance $\mathcal{P}_0$ contains no constraints, so $\R_0 = \A^n$. A compact representation of $\R_0$ can therefore be constructed as shown in Example~\ref{alksdj;asjfegh}. The algorithm then proceeds iteratively: for each $l$, it computes a compact representation $R_{l+1}$ of $\R_{l+1}$ based on $R_l$ and the next constraint $C_{l+1}$. This step is carried out by invoking the procedure {\sc Next}$(R_l, C_{l+1})$, the appropriate simplification of which is described in Section \ref{+++)_)_)KJ}. 

Mal'tsev algorithm correctly decides whether an instance $\mathcal{P}$ of CSP($\operatorname{Inv}(\mu)$) is satisfiable in time $O(mn^8 + m(n + |S^*|)^4 |S^*| n^2)$, where $n$ is the number of variables in $\mathcal{P}$, $m$ is the number of constraints, and $S^*$ is the largest constraint relation occurring in $\mathcal{P}$.

\subsection{Theory $V^1$, Extended Frege proof system and Translation theorem}\label{==-asl;iyfg;iy}

In this section, we define the two-sorted theory $V^1$ and the extended Frege propositional proof system, and explain the correspondence between these two notions. Most definitions and results are adapted from~\cite{10.5555/1734064},~\cite{krajicek_1995}, and~\cite{krajicek2019proof}. 

Theories of bounded arithmetic known as \emph{second-order} (or \emph{two-sorted} first-order) theories are based on a structured two-sorted framework. They distinguish between two types of variables: variables such as $x, y, z, \ldots$ are \emph{number variables}, representing natural numbers, while variables such as $X, Y, Z, \ldots$ are \emph{set (or string) variables}, representing finite subsets of natural numbers - that is, binary strings. Function and predicate symbols may involve both kinds of variables. Functions are categorized as either \emph{number-valued} or \emph{set-valued}, depending on the type of their output. Similarly, quantifiers are divided into two kinds: \emph{number quantifiers}, which range over number variables, and \emph{string quantifiers}, which range over set variables. The language of these second-order theories extends the usual language of Peano arithmetic $\mathcal{L}_{\mathrm{PA}}$, and is denoted by:
$$
\mathcal{L}^2_{\mathrm{PA}} = \{0, 1, +, \cdot, \lvert\,\rvert, =_1, =_2, \leq, \in\}.
$$
In this notation, the symbols $0, 1, +, \cdot, =_1$, and $\leq$ are function and predicate symbols interpreted over number variables. The function $\lvert X \rvert$, referred to as the \emph{length of $X$}, is number-valued and returns the length (or an upper bound) of the binary string $X$. The binary predicate $\in$ expresses set membership between a number and a set variable; we often write $t \in X$ as $X(t)$, interpreting $X(i)$ as the $i$-th bit of the binary string $X$ of length $\lvert X \rvert$. The symbol $=_2$ denotes equality between sets. The basic properties of all symbols in $\mathcal{L}^2_{\mathrm{PA}}$ are captured by a set of axioms known as $2$-BASIC~\cite{10.5555/1734064}. While the axioms for symbols related to number variables capture the standard properties of natural numbers, the axiom subset in $2$-BASIC that considers the second-sorted variables includes the following axioms: 
\begin{enumerate}
\item $X(y)\to y<|X|$.
\item $y+1 =_1 |X|\to X(y)$.
\item $(|X|=_1|Y|\wedge \forall i<|X|(X(i)\leftrightarrow Y(i)))\iff X=_2Y$.
\end{enumerate}
When the variables $x$ and $X$ do not appear in a term $t$, the expression $\exists x \leq t\, \phi$ abbreviates $\exists x\, (x \leq t \wedge \phi)$, and $\forall x \leq t\, \phi$ abbreviates $\forall x\, (x \leq t \rightarrow \phi)$. Similarly, $\exists X \leq t\, \phi$ stands for $\exists X\, (\lvert X \rvert \leq t \wedge \phi)$, and $\forall X \leq t\, \phi$ stands for $\forall X\, (\lvert X \rvert \leq t \rightarrow \phi)$. Quantifiers written in this way are referred to as \emph{bounded} quantifiers. A formula in which all quantifiers are bounded is called a \emph{bounded formula}.

\begin{df}[Two-Sorted Definability] Let $T$ be a second-sorted theory over vocabulary $\mathcal{L}\supseteq\mathcal{L}^2_{\mathrm{PA}}$, and let $\Phi$ be a set of $\mathcal{L}$-formulas. 
\begin{enumerate}
    \item We say that a predicate symbol $P(\bar x, \bar X)$ not in $\mathcal{L}$ is $\Phi$-\emph{representable} in $T$ if there is an $\mathcal{L}$-formula $\varphi(\bar x,\bar X)$ in $\Phi$ such that
    \begin{gather}\label{ajajaaa}
       P(\bar x, \bar X) \iff \varphi(\bar x,\bar X).
    \end{gather}

    \item We say that a number function $f(\bar x, \bar X)$ not in $\mathcal{L}$ is $\Phi$-\emph{definable} in $T$ if there is an $\mathcal{L}$-formula $\varphi(\bar x,\bar X,y)$ in $\Phi$ such that
\begin{gather}
    T\vdash \forall \bar x\forall \bar X\exists !y\,\,\varphi(\bar x,\bar X, y),
\end{gather}
    meaning that for all $\bar x,\bar X$ theory $T$ proves that there is a unique $y$ such that $\varphi(\bar x,\bar X, y)$, and  
\begin{gather}\label{alalksldjlkh}
    y = f(\bar x, \bar X)\iff \varphi(\bar x,\bar X, y).
\end{gather} 

\item We say that a string function symbol $F(\bar x, \bar X)$ not in $\mathcal{L}$ is $\Phi$-\emph{definable} in $T$ if there is a an $\mathcal{L}$-formula $\varphi(\bar x,\bar X,Y)$ in $\Phi$ such that
\begin{gather}\label{erhgwhrwtjrtj}
    T\vdash \forall \bar x\forall \bar X\exists !Y\,\,\varphi(\bar x,\bar X, y),
\end{gather}
and  
\begin{gather}\label{alalksldjlkh}
    Y = F(\bar x, \bar X)\iff \varphi(\bar x,\bar X, Y).
\end{gather} 
\item  We say that a string function symbol $F(\bar x, \bar X)$ not in $\mathcal{L}$ is $\Phi$-\emph{bit-definable} from $\mathcal{L}$ if there is a formula $\varphi(i,\bar x, \bar X)$ in $\Phi$ and an $\mathcal{L}^2_{\mathrm{PA}}$-number term $t(\bar x, \bar X)$ such that the bit graph of $F$ satisfies 
\begin{equation}\label{s;kajhsd;fkhj}
    F(\bar x, \bar X)(i)\iff (i<t(\bar x, \bar X)\wedge \varphi(i,\bar x, \bar X)).
\end{equation}
\end{enumerate}
We say that (\ref{ajajaaa}) is a \emph{defining axiom} for $P(\bar x, \bar X)$, (\ref{alalksldjlkh}) is a \emph{defining axiom}
for $f(\bar x, \bar X)$ and (\ref{erhgwhrwtjrtj}) is a \emph{defining axiom}
for $F(\bar x, \bar X)$. We say that $f$ and $F$ are \emph{definable} in $T$ if it is $\Phi$-definable in $T$ for some $\Phi$. We say that the formula on the RHS of (\ref{s;kajhsd;fkhj}) is a \emph{bit-defining axiom}, or \emph{bit
definition}, of $F$. Note also that such function $F$ is polynomially bounded
in $T$, meaning that $|F(\bar x, \bar X)|\leq t(\bar x, \bar X)$, and $t$ is a bounding term for $F$.
    
\end{df}

\begin{df}[Number induction axiom scheme]
    Let $\Phi$ be a set of two-sorted formulas, and $\phi\in \Phi$. The \emph{number induction axiom scheme} for the set $\Phi$, denoted by $\Phi$-IND, is the set of formulas
\begin{equation}
  \phi(\bar{x},\bar{X},0) \wedge \forall y (\phi(\bar{x},\bar{X},y)\to \phi(\bar{x},\bar{X},y+1))\to \forall z \phi(\bar{x},\bar{X},z).
\end{equation}
\end{df}

The next axiom scheme allows one to define sets with formulas. 

\begin{df}[Comprehension axiom scheme]
    Let $\Phi$ be a set of two-sorted formulas, and $\phi\in \Phi$. The \emph{comprehension axiom scheme} $\Phi$-COMP is the set of all formulas
\begin{equation}
    \forall y\exists Y\leq y\,\forall z< y \, \phi(\bar{x},\bar{X},z)\iff Y(z).
\end{equation}
\end{df}

The class of formulas in which all number quantifiers are bounded and no string quantifiers appear is denoted by $\Sigma^{1,b}_0 = \Pi^{1,b}_0$. For each $i \geq 0$, the class $\Sigma^{1,b}_{i+1}$ (respectively, $\Pi^{1,b}_{i+1}$) consists of formulas of the form $\exists \bar{X} \leq \bar{t}\, \phi$ (respectively, $\forall \bar{X} \leq \bar{t}\, \phi$), where $\phi$ is a $\Pi^{1,b}_{i}$-formula (respectively, a $\Sigma^{1,b}_{i}$-formula), and $\bar{t}$ is a sequence of $\mathcal{L}^2_{\mathrm{PA}}$-terms that do not contain any variable from $\bar{X}$.

\begin{df}[Theory $V^1$]
Theory $V^1$ is a second-order theory axiomatized by $2$-BASIC, $\Sigma^{1,b}_0$-COMP, and $\Sigma^{1,b}_1$-IND.
\end{df}
Theory $V^1$ corresponds to a polynomial-time reasoning in the following sense.

\begin{thm}
    [\cite{buss1}] $\vspace{1pt}$
    \begin{itemize}
        \item A relation $R$ is in NP iff it is represented by some $\Sigma^{1,b}_1$-formula.
            \item A relation $R$ is in P iff $V^1$ proves that it is both $\Sigma^{1,b}_1$ and $\Pi^{1,b}_1$-definable.
        \item A function $f$ is in FP iff it is $\Sigma^{1,b}_1$-definable in $V^1$.   
    \end{itemize}
\end{thm}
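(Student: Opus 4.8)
The plan is to prove the three equivalences separately, noting that the characterization of $\mathrm P$ in the second bullet follows from the function characterization in the third once we pass to characteristic functions, so the real content is the first and third bullets. For the first bullet I would argue directly from the definitions. If $R(\bar x,\bar X)$ is in $\mathrm{NP}$, fix a deterministic polynomial-time verifier $M$ and a polynomial bound so that $R(\bar x,\bar X)\iff\exists Y\le t(\bar x,|\bar X|)\,(M\text{ accepts }(\bar x,\bar X,Y))$; the acceptance predicate of a fixed polynomial-time machine on an explicitly given input is $\mathrm{AC}^0$ and hence $\Sigma^{1,b}_0$-representable via the usual computation-table formalization, so prefixing the bounded string quantifier produces a $\Sigma^{1,b}_1$ formula. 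Conversely, a $\Sigma^{1,b}_1$ formula $\exists Y\le t\,\psi$ with $\psi\in\Sigma^{1,b}_0$ is decided in $\mathrm{NP}$ by guessing $Y$ and evaluating $\psi$, which lies in $\mathrm P$ (indeed in $\mathrm{AC}^0$) since $\Sigma^{1,b}_0$-predicates are $\mathrm{AC}^0$.

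For the easy direction of the third bullet -- every $f\in\mathrm{FP}$ is $\Sigma^{1,b}_1$-definable in $V^1$ -- I would encode a polynomial-time Turing machine $M$ computing $f$ by a string $Z$ listing all its configurations on a given input; each configuration has polynomial size and there are polynomially many, so $|Z|$ is polynomially bounded, and I let $\varphi(\bar x,\bar X,Y)$ assert ``some such string $Z$ is a valid halting computation of $M$ on $(\bar x,\bar X)$ with output $Y$'', which is $\Sigma^{1,b}_1$ because the ``valid-computation-with-output'' predicate of $Z$ is $\Sigma^{1,b}_0$. The work is to check $V^1\vdash\forall\bar x\forall\bar X\,\exists! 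Y\,\varphi$. Uniqueness is a routine induction on time steps using determinism of $M$. For existence I would run $\Sigma^{1,b}_1$-IND on a time parameter $s\le n^k$ applied to the $\Sigma^{1,b}_1$ statement ``there is a string correctly encoding configurations $0,\dots,s$''; the base case is immediate and the induction step appends one configuration, using that the one-step transition of $M$ is local and hence $\Sigma^{1,b}_0$-bit-definable, together with $\Sigma^{1,b}_0$-COMP to form the extended string -- this is exactly the sort of reasoning $V^1$ is designed to carry out.

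The substantive direction is the $\Sigma^{1,b}_1$-witnessing theorem: if $V^1\vdash\forall\bar x\forall\bar X\,\exists Y\,\varphi(\bar x,\bar X,Y)$ with $\varphi\in\Sigma^{1,b}_1$, then some $\mathrm{FP}$ function $F$ satisfies $\varphi(\bar x,\bar X,F(\bar x,\bar X))$ over $\mathbb N$ (and under the uniqueness hypothesis $F$ agrees with the defined function, so $f=F\in\mathrm{FP}$). I would obtain this by a proof-theoretic argument: present $V^1$ as a two-sorted sequent calculus with the $2$-BASIC and $\Sigma^{1,b}_0$-COMP axioms and a $\Sigma^{1,b}_1$-IND rule, and apply free-cut elimination so that every formula in a proof of a $\Sigma^{1,b}_1$ sequent is, up to provable equivalence, in $\Sigma^{1,b}_1\cup\Pi^{1,b}_1$. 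Then by induction on such an anchored proof, attach to each sequent polynomial-time witnessing functions mapping witnesses for the $\Sigma^{1,b}_1$-antecedent formulas to witnesses for the $\Sigma^{1,b}_1$-succedent formulas (treating $\Pi^{1,b}_1$-formulas through their $\Sigma^{1,b}_1$ matrices in Herbrand form). The propositional and comprehension steps are direct, the cut rule composes witnessing functions, and the hard point -- the main obstacle of the whole proof -- is the $\Sigma^{1,b}_1$-IND rule, where the witnessing function for the conclusion is obtained by iterating that of the premise polynomially many times; this stays in $\mathrm{FP}$ only because every string manipulated is polynomially length-bounded, so the iterated composition does not blow up, and keeping careful track of these bounds is the delicate part. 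Reading $F$ off at the root finishes the argument; see \cite{10.5555/1734064} for the full development. As a fallback I would instead transport the statement across the RSUV isomorphism between $V^1$ and Buss's single-sorted $S^1_2$ and invoke Buss's original witnessing theorem \cite{buss1} for $S^1_2$, where the same iteration-of-witnessing issue appears in the guise of the $\Sigma^b_1$-PIND rule.

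Finally, the second bullet follows: if $R\in\mathrm P$ then $\chi_R\in\mathrm{FP}$ is $\Sigma^{1,b}_1$-definable in $V^1$ by the easy direction above, so $R(\bar x,\bar X)\iff\chi_R(\bar x,\bar X)=1$ is provably equivalent both to a $\Sigma^{1,b}_1$ formula (``some computation of the $\chi_R$-machine outputs $1$'') and to a $\Pi^{1,b}_1$ formula (``every computation that is the $\chi_R$-computation outputs $1$''). Conversely, if $V^1\vdash R\iff\sigma$ and $V^1\vdash R\iff\pi$ with $\sigma\in\Sigma^{1,b}_1$ and $\pi\in\Pi^{1,b}_1$, then $V^1\vdash\forall\bar x\forall\bar X\,(\sigma\vee\neg\pi)$, which is equivalent to a provable $\Sigma^{1,b}_1$ statement, so the witnessing theorem yields a polynomial-time procedure returning on each input a witness for whichever of $\sigma$, $\neg\pi$ holds, thereby deciding $R$ in polynomial time.
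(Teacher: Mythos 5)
The paper does not prove this theorem at all: it is stated as imported background, cited to Buss (and developed in full in Cook--Nguyen \cite{10.5555/1734064}), and is used only as a black box to justify that polynomial-time constructions are available inside $V^1$. So there is no in-paper proof to compare against; what you have written is a sketch of the standard literature proof, and as such it is essentially sound: the NP characterization by bounded existential string quantification over a $\Sigma^{1,b}_0$ matrix, the $\Sigma^{1,b}_1$-definability of FP functions via provably total computation tables (existence by $\Sigma^{1,b}_1$-IND on the time parameter plus $\Sigma^{1,b}_0$-COMP, uniqueness by determinism), the witnessing theorem for the hard direction via anchored sequent-calculus proofs with the IND rule handled by polynomially iterated composition of witnessing functions, and the derivation of the P characterization from witnessing applied to $\sigma\vee\neg\pi$. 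You correctly identify the genuine crux (the iteration of witnesses through the induction rule while keeping all strings polynomially length-bounded), and the RSUV fallback to $S^1_2$ is a legitimate alternative.

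One point deserves correction. In the first bullet you assert that ``the acceptance predicate of a fixed polynomial-time machine on an explicitly given input is $\mathrm{AC}^0$.'' As literally stated this is false: deciding whether a fixed polynomial-time machine accepts a given input is P-complete, and P strictly contains uniform $\mathrm{AC}^0$. What is $\mathrm{AC}^0$ (hence $\Sigma^{1,b}_0$-representable) is the predicate ``$Z$ is a valid accepting computation table of $M$ on this input,'' where the table $Z$ is supplied explicitly; so the correct formula is $\exists Y\,\exists Z\,\theta(\bar x,\bar X,Y,Z)$ with both the nondeterministic witness and the computation table under the existential block, not just $\exists Y$ prefixed to an alleged $\Sigma^{1,b}_0$ acceptance predicate. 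Your phrase ``via the usual computation-table formalization'' shows you intend exactly this, but the sentence as written conflates the two predicates; the same clarification should be made explicit wherever you invoke it (it recurs in your $\Pi^{1,b}_1$ representation in the last paragraph). With that repaired, the sketch is the standard argument and is correct.
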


\begin{lem}(Extension by Bit Definition \cite{10.5555/1734064})
Any polynomially bounded number function whose graph is $\Sigma^{1,b}_0$-representable, or a string function whose bit graph is $\Sigma^{1,b}_0$-bit-definable, is $\Sigma^{1,b}_0$-definable in $V^1$. 
\end{lem}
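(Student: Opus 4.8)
The plan is to treat the two cases separately, in each case exhibiting an explicit $\Sigma^{1,b}_0$ formula and verifying, working inside $V^1$, that it has a provably unique witness. Existence will be reduced to a \emph{bounded} search --- handled by $\Sigma^{1,b}_0$-COMP for string functions and by the least number principle for number functions --- and uniqueness to the $2$-BASIC axioms. I expect the one genuinely delicate point to be bridging the gap between the hypothesis (the graph is merely $\Sigma^{1,b}_0$-\emph{representable}, with no provability attached) and the conclusion ($\Sigma^{1,b}_0$-\emph{definability}, which by the definition above requires $V^1\vdash\forall\bar x\forall\bar X\,\exists!\,y\,\varphi$): polynomial boundedness is exactly what closes this gap, turning an unbounded witness search into a bounded one that $V^1$ can actually carry out.

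For the string case I would start from a bit-defining formula $\varphi(i,\bar x,\bar X)\in\Sigma^{1,b}_0$ with bounding term $t$, so that $F(\bar x,\bar X)(i)\leftrightarrow(i<t\wedge\varphi(i,\bar x,\bar X))$ and $|F(\bar x,\bar X)|\le t$, and take as defining formula
$$\Psi(\bar x,\bar X,Y)\ :=\ |Y|\le t\ \wedge\ \forall i<t\,(Y(i)\leftrightarrow\varphi(i,\bar x,\bar X)),$$
which lies in $\Sigma^{1,b}_0$ since that class is closed under Boolean combinations and bounded number quantification. Existence of such a $Y$ is precisely an instance of $\Sigma^{1,b}_0$-COMP, with comprehension formula $\varphi$ and bound $t$, using the $2$-BASIC axiom $X(y)\to y<|X|$ to rule out bits at positions $\ge t$. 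Uniqueness follows from the extensionality axiom together with $y+1=|X|\to X(y)$, which pin down $|Y|$ from its bit pattern on $[0,t)$. Since $F(\bar x,\bar X)$ itself satisfies $\Psi(\bar x,\bar X,\cdot)$ by its bit definition and boundedness, the defining axiom $Y=_2 F(\bar x,\bar X)\iff\Psi(\bar x,\bar X,Y)$ holds, so $F$ is $\Sigma^{1,b}_0$-definable in $V^1$.

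For the number case, starting from a graph $\psi(\bar x,\bar X,y)\in\Sigma^{1,b}_0$ with $y=f(\bar x,\bar X)\iff\psi(\bar x,\bar X,y)$ and a bounding term with $f(\bar x,\bar X)\le t(\bar x,\bar X)$, I would take
$$\varphi(\bar x,\bar X,y)\ :=\ \bigl(y\le t\wedge\psi(\bar x,\bar X,y)\wedge\forall z<y\,\neg\psi(\bar x,\bar X,z)\bigr)\ \vee\ \bigl(y=0\wedge\forall z\le t\,\neg\psi(\bar x,\bar X,z)\bigr),$$
which is again $\Sigma^{1,b}_0$ because $\Sigma^{1,b}_0=\Pi^{1,b}_0$ is closed under negation. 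Uniqueness is immediate: the disjuncts are mutually exclusive, the first isolates the least witness of $\psi$, the second forces $y=0$. For existence, $V^1$ uses $\Sigma^{1,b}_0$-COMP to form the set $S=\{z\le t:\psi(\bar x,\bar X,z)\}$ and splits cases: if $S\neq\emptyset$ it has a least element (every nonempty bounded set of numbers does, a fact $V^1$ proves from $\Sigma^{1,b}_0$-IND, which it has as an instance of its $\Sigma^{1,b}_1$-IND axiom), and that element satisfies the first disjunct; otherwise $y=0$ satisfies the second. Because the given $f$ is a genuine total function with graph $\psi$ and value $\le t$, the first case always applies and the value produced is exactly $f(\bar x,\bar X)$, so the defining axiom $y=f(\bar x,\bar X)\iff\varphi(\bar x,\bar X,y)$ holds. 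An alternative route here would be to code $f(\bar x,\bar X)$ as a string (say its binary expansion), reduce to the already-settled string case, and then read the number off by one more $\Sigma^{1,b}_0$ definition.

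The hard part, as indicated, is making totality \emph{provable} rather than merely true: the defining formula must be written so that $V^1$ can certify it by a bounded search, via $\Sigma^{1,b}_0$-COMP in the string case and the least number principle for $\Sigma^{1,b}_0$ formulas in the number case, which is exactly where polynomial boundedness is indispensable. Once that is arranged, the remaining verifications --- closure of $\Sigma^{1,b}_0$ under the relevant connectives and bounded quantifiers, and uniqueness from the $2$-BASIC length and extensionality axioms --- are routine.
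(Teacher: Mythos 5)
Your proposal is correct and follows the standard argument from Cook--Nguyen, which is exactly what the paper relies on: the lemma is stated here only with a citation and no in-paper proof. Both halves are handled the right way --- $\Sigma^{1,b}_0$-COMP with bound $t$ plus the $2$-BASIC length/extensionality axioms for the string case, and the least-witness formula (with a default value to make unique existence provable) justified by $\Sigma^{1,b}_0$-COMP and the least number principle for the number case --- and your observation that polynomial boundedness is precisely what makes totality provable rather than merely true is the correct key point.
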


A \emph{propositional proof system} is any polynomial-time function $P$ whose range is exactly the set of all tautologies $TAUT$. For a tautology $\tau\in TAUT$, a string $w$ such that $P(w)=\tau$ is called a $P$-proof of $\tau$. Proof systems are typically specified by a finite set of inference rules of a fixed form, and proofs are constructed by applying these rules sequentially. The complexity of a proof is evaluated in terms of its size and the number of inference steps involved.

An $l$-ary \emph{Frege rule} is any $(l+1)$-tuple of propositional formulas $\varphi_0, \ldots, \varphi_l$ written as
\begin{gather*}
    \frac{\varphi_1, \ldots, \varphi_l}{\varphi_0},
\end{gather*}
such that $\varphi_1, \ldots, \varphi_l$ logically imply $\varphi_0$, denoted $\varphi_1, \ldots, \varphi_l \vDash \varphi_0$. A rule with $l = 0$ is called a \emph{Frege axiom scheme}. A well-known example of a Frege rule is modus ponens:
$$
\frac{\varphi, \,\,\, \varphi \to \psi}{\psi}.
$$
If $F$ is a finite set of Frege rules, an \emph{$F$-proof} of a formula $\psi$ from formulas $\varphi_1, \ldots, \varphi_t$ is any sequence of formulas $\theta_1, \ldots, \theta_k$ such that $\theta_k = \psi$, and for all $i = 1, \ldots, k$, either $\theta_i \in \{\varphi_1, \ldots, \varphi_t\}$, or it is inferred from some earlier $\theta_j$'s ($j < i$) by a rule from $F$. We denote by $\varphi_1, \ldots, \varphi_t \vdash_F \psi$ the fact that there exists an $F$-proof of $\psi$ from $\varphi_1, \ldots, \varphi_t$. A propositional \emph{Frege proof system} $F$ is a finite set of Frege rules that is sound and implicationally complete, meaning that for any propositional formulas $\varphi_1, \ldots, \varphi_n, \psi$,
$$
\varphi_1, \ldots, \varphi_n \vDash \psi \iff \varphi_1, \ldots, \varphi_n \vdash_F \psi.
$$

\begin{df}[Extended Frege proof system] 
If $F$ is a Frege proof system, an extended Frege proof is a sequence of formulas $\theta_1, \ldots, \theta_k$ such that each $\theta_i$ is either derived from some earlier $\theta_j$'s by a rule from $F$, or has the form $q \equiv \psi$, where:
\begin{enumerate}
    \item the atom $q$ appears in neither $\psi$ nor in any $\theta_j$ for $j < i$,
    \item the atom $q$ does not appear in $\theta_k$,
    \item $\alpha \equiv \beta$ abbreviates $(\alpha \wedge \beta) \vee (\neg \alpha \wedge \neg \beta)$.
\end{enumerate}
A formula of this form is called an \emph{extension axiom}, and $q$ is called an \emph{extension atom}. An \emph{extended Frege proof system} $EF$ is a proof system in which proofs are extended Frege proofs. 
\end{df}

There is a well-known translation of $\Sigma^{1,b}_0$-formulas, described for example in~\cite{10.5555/1734064}, which transforms any formula $\phi(\bar{x}, \bar{X}) \in \Sigma^{1,b}_0$ into a corresponding family of propositional formulas
\begin{equation}
||\phi(\bar{x},\bar{X})|| = \{\phi(\bar{x},\bar{X})[\bar{m},\bar{n}]: \bar{m},\bar{n} \in \mathbb{N}\}.
\end{equation}
We will not explain the translation in detail, but the basic idea is as follows: for every free set variable $X$ in $\varphi$ of length $n$, we introduce propositional variables $p^X_0, p^X_1, \ldots, p^X_{n - 1}$, where each $p^X_i$ is intended to represent $X(i)$. The corresponding propositional formula is then constructed using inductive rules. 

\begin{lem}[\cite{10.5555/1734064}]\label{LPropositionalTranslation} For every $\Sigma^{1,b}_0(\mathcal{L}^2_{\mathrm{PA}})$ formula $\phi(\bar{x}, \bar{X})$, there exists a constant $d \in \mathbb{N}$ and a polynomial $p(\bar{m}, \bar{n})$ such that for all $\bar{m}, \bar{n} \in \mathbb{N}$, the propositional formula $\phi(\bar{x}, \bar{X})[\bar{m}, \bar{n}]$ has depth at most $d$ and size at most $p(\bar{m}, \bar{n})$.
\end{lem}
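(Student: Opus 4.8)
The plan is to prove the statement by a single structural induction on the $\Sigma^{1,b}_0$-formula $\phi$, carrying both invariants at once: that there are a constant $d_\phi$ and a polynomial $p_\phi$ such that $\phi(\bar{x},\bar{X})[\bar{m},\bar{n}]$ has depth at most $d_\phi$ and size at most $p_\phi(\bar{m},\bar{n})$ for all $\bar{m},\bar{n}\in\mathbb{N}$. The only preliminary fact I would isolate first is that every $\mathcal{L}^2_{\mathrm{PA}}$ number term is polynomially bounded: since such a term $t(\bar{x},\bar{X})$ is built only from $0,1,+,\cdot,|\cdot|$ and variables, substituting the numerals $\bar{m}$ for $\bar{x}$ and the values $n_j$ for $|X_j|$ yields a natural number bounded by a fixed polynomial $q_t(\bar{m},\bar{n})$ depending only on $t$. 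This is exactly what guarantees that the conjunctions and disjunctions produced by bounded number quantifiers have only polynomially many conjuncts.

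For the base case, an atomic $\Sigma^{1,b}_0$-formula is, modulo its term structure, one of $t_1=t_2$, $t_1\leq t_2$, $X(t)$, or $X=_2 Y$. After the substitution $[\bar{m},\bar{n}]$ the terms name concrete numbers, so $t_1=t_2$ and $t_1\leq t_2$ become the constant $\top$ or $\bot$; $X(t)$ becomes the propositional variable $p^X_k$ when $k:=t(\bar{m},\bar{n})<n_X$ and $\bot$ otherwise; and $X=_2 Y$ becomes a bounded-depth combination of the length comparison with $\bigwedge_{i<\min(n_X,n_Y)}(p^X_i\leftrightarrow p^Y_i)$. In each case the depth is bounded by a small absolute constant and the size is polynomial in $\bar{n}$.

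For the inductive step I would follow the clauses of the definition of the translation. For $\neg\psi$ and for $\psi_1\wedge\psi_2$, $\psi_1\vee\psi_2$ the translation applies the matching connective to the translations of the immediate subformulas, so the depth grows by one and the size grows by an additive constant (plus, in the binary cases, the sum of the two subformula sizes). For a bounded number quantifier $\exists y\leq t\,\psi$ (respectively $\forall y\leq t\,\psi$) the translation is the unbounded-fan-in disjunction (respectively conjunction), over all $i\leq t(\bar{m},\bar{n})$, of the translations of $\psi$ with $i$ substituted for $y$; counting an unbounded-fan-in gate as one level of depth, this has depth $d_\psi+1$ and size at most $(t(\bar{m},\bar{n})+1)\cdot p_\psi(\bar{m},\bar{n})$ up to lower-order terms, which is still polynomial by the term bound above. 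Thus each syntactic constructor of $\phi$ raises the depth by a fixed constant and multiplies the size by at most a polynomial; since the syntax tree of $\phi$ is a fixed finite object, the total depth is a constant $d$ and the accumulated size is a single polynomial $p$.

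The one place the hypotheses are genuinely used — and the reason the translation is defined on $\Sigma^{1,b}_0$ rather than on all bounded formulas — is the uniformity of the depth bound in $\bar{m}$ and $\bar{n}$: it holds precisely because $\phi$ has no string quantifiers (a string quantifier would have no finite propositional counterpart) and every number quantifier is bounded by a term whose value is finite and polynomial, so each quantifier contributes exactly one more level of gates and only a polynomial factor to the size. Beyond this observation the argument is routine bookkeeping over the cases of the inductive definition of $\|\cdot\|$, so I do not expect a real obstacle — only the care needed to track depth and size simultaneously.
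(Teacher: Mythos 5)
The paper does not prove this lemma: it is stated as a cited result from the Cook--Nguyen monograph, and the text explicitly declines to spell out the translation $\|\cdot\|$ beyond its basic idea. So there is no in-paper proof to compare against; what you have written is, in substance, the standard textbook argument, and it is correct. Your structural induction, with the preliminary observation that every $\mathcal{L}^2_{\mathrm{PA}}$ number term evaluates to a value bounded by a fixed polynomial in $\bar{m},\bar{n}$, is exactly the mechanism that makes the lemma work: atoms translate to constants, propositional variables, or small fixed-depth formulas; connectives add one level of depth and an additive amount of size; and a bounded number quantifier becomes a single unbounded-fan-in conjunction or disjunction over polynomially many instances, adding one level of depth and a polynomial factor of size. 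Your closing remark correctly isolates why the hypothesis $\phi\in\Sigma^{1,b}_0$ is needed (no string quantifiers, all number quantifiers bounded by terms).

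One point deserves slightly more care than your phrase ``up to lower-order terms'': in the quantifier case the disjuncts are translations of $\psi$ with the numeral $i$ substituted for $y$, so the inductive size polynomial $p_\psi$ must be taken as a function of the values of \emph{all} free number variables of $\psi$, including $y$; the bound on the $i$-th disjunct is then $p_\psi(\bar{m}, i, \bar{n})$ with $i\le t(\bar{m},\bar{n})$, and the total size is controlled by composing $p_\psi$ with the polynomial bound $q_t$ on the term. This is routine (polynomials are closed under composition and monotone majorization) but it is the one bookkeeping step your induction hypothesis must be phrased to support; as written, your hypothesis quantifies $p_\phi$ only over $\bar{m},\bar{n}$ without making the dependence on the bound variable explicit. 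With that adjustment the argument is complete.
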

The correspondence between theory $V^1$ and the extended Frege proof system is captured by the following result. 

\begin{thm}[$V^1$ Translation \cite{10.5555/1734064}] \label{Translation} Suppose that $\varphi(\bar{x},\bar{X})$ is a $\Sigma^{1,b}_0$-formula such that $
V^1 \vdash \forall\bar{x}\forall\bar{X}\varphi(\bar{x},\bar{X})
$.
Then the family of propositional formulas $||\varphi(\vec{x},\vec{X})||= \{\varphi(\vec{x},\vec{X})[\vec{m},\vec{n}]: \vec{m},\vec{n} \in \mathbb{N}\}$ has polynomial size extended Frege proofs and these proofs can be constructed by a $p$-time algorithm.
\end{thm}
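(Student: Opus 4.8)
The plan is to follow the standard route for propositional translations of bounded arithmetic, due in the one-sorted setting to Cook for $PV$ and $EF$ and carried out for the two-sorted theories in \cite{10.5555/1734064} (a related route goes through an RSUV-type correspondence with first-order bounded arithmetic): put the given $V^1$-proof into a syntactic normal form, fix the numerical parameters $\bar m,\bar n$, translate the proof sequent by sequent, and verify that each inference becomes a polynomial-size extended Frege derivation once the needed subformulas and witnesses are abbreviated by extension atoms. Concretely, I would first replace $V^1$ by an equivalent two-sorted sequent calculus $LK^2$ with the $2$-BASIC and $\Sigma^{1,b}_0$-COMP initial sequents and a $\Sigma^{1,b}_1$-IND rule. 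Since $V^1$ is finitely axiomatizable, anchored cut-elimination (free-cut elimination) applies, producing a proof of the sequent form of $\forall\bar x\forall\bar X\,\varphi$ in which every formula occurring anywhere is a substitution instance of a subformula of an axiom, of the endsequent, or of an induction formula; all such formulas therefore have bounded string-quantifier complexity, i.e.\ lie in $\Sigma^{1,b}_1\cup\Pi^{1,b}_1$. The key structural point is that this normal-form proof is fixed once and for all, so the number of sequents in it is a constant that does not depend on $\bar m,\bar n$.

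Next I would extend the translation $||\cdot||$ from $\Sigma^{1,b}_0$ to $\Sigma^{1,b}_1$. Once $\bar m,\bar n$ are fixed, a formula $\exists Y\le t(\bar x,\bar X)\,\psi$ is translated by adjoining, for each of the polynomially many (because $t$ is an $\mathcal{L}^2_{\mathrm{PA}}$-term evaluated at fixed parameters) bit positions of the witness $Y$, a fresh propositional atom, together with the purely propositional translation of the $\Sigma^{1,b}_0$ matrix $\psi$. These witness atoms are treated as extension atoms, which keeps the target proof system inside extended Frege; equivalently, one first translates into the treelike quantified propositional system $G_1^*$ and then invokes its polynomial simulation by $EF$ on quantifier-free tautologies. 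With this extension of $||\cdot||$ fixed, each sequent $\Gamma\to\Delta$ of the normal-form proof is sent to the propositional sequent $\bigwedge||\Gamma||[\bar m,\bar n]\to\bigvee||\Delta||[\bar m,\bar n]$, whose size is polynomial in $\bar m,\bar n$ by Lemma \ref{LPropositionalTranslation} applied to the $\Sigma^{1,b}_0$ matrices and by the polynomial bound on the number of witness atoms.

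Then I would check, one rule at a time, that each inference of the normal-form calculus becomes a polynomial-size $EF$ derivation of the translated conclusion from the translated premises. The propositional and structural rules and the bounded-quantifier rules translate almost syntactically, with eigenvariable conditions preserved because the witness atoms introduced are fresh; the $2$-BASIC and $\Sigma^{1,b}_0$-COMP initial sequents translate to $EF$-provable tautologies of polynomial size, where COMP requires extension atoms naming, bit by bit, the characteristic string it asserts to exist. The only genuinely non-trivial case is the $\Sigma^{1,b}_1$-IND rule: inferring $\phi(\bar x,\bar X,t)$ from $\phi(\bar x,\bar X,0)$ and $\phi(\bar x,\bar X,y)\to\phi(\bar x,\bar X,y+1)$ is translated by replicating the translation of the subderivation of the induction step for the values $y=0,1,\ldots,t[\bar m,\bar n]-1$ and chaining the resulting implications with modus ponens through $||\phi||[\ldots,0],||\phi||[\ldots,1],\ldots,||\phi||[\ldots,t[\bar m,\bar n]]$. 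Because the normal-form proof is fixed, only constantly many such (possibly nested) replications occur, each multiplying the size by a factor polynomial in $\bar m,\bar n$, so the composite $EF$-proof of $||\varphi||[\bar m,\bar n]$ still has size polynomial in $\bar m,\bar n$; tracing the construction shows it is produced uniformly by a polynomial-time algorithm, which is the last clause of the theorem.

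The step I expect to be the main obstacle is the handling of the string (second-order) existential quantifier: unlike a bounded number quantifier it ranges over exponentially many sets and cannot be unfolded into a propositional disjunction. What must be gotten right is that in a free-cut-free $V^1$-proof of a $\Sigma^{1,b}_0$ conclusion these quantifiers occur only in the controlled positions produced by the induction and comprehension rules and that their witnesses are always length-bounded by a term, so that polynomially many extension atoms suffice to name the witness bits and the translation stays inside $EF$ — equivalently, the careful passage through $G_1^*$ together with its $EF$-simulation. The second delicate point is making the cut-elimination step deliver precisely the bounded-complexity invariant (every formula in $\Sigma^{1,b}_1\cup\Pi^{1,b}_1$) on which this whole translation rests.
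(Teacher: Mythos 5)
The paper does not prove this theorem: it is imported verbatim from \cite{10.5555/1734064} as background, so there is no in-paper argument to compare against. Your outline is the standard proof from that source — anchored (free-cut) elimination yielding a fixed normal-form $LK^2$-proof with all formulas in $\Sigma^{1,b}_1\cup\Pi^{1,b}_1$, sequent-by-sequent translation, unwinding $\Sigma^{1,b}_1$-IND into a polynomial-length modus-ponens chain, and handling string existentials via extension atoms (equivalently, passing through $G_1^*$ and its $EF$-simulation) — and you correctly flag the two genuinely delicate points. The only quibble is that free-cut elimination needs only that the individual proof uses finitely many axiom instances, not that $V^1$ is finitely axiomatizable.
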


\subsection{Auxiliary functions, relations, axioms and notation in theory $V^1$}\label{asdkjfg;lkg}

In this section, we introduce several general auxiliary functions and relations that are definable in theory $V^1$ and will be used in the formalization of Mal'tsev algorithm. Most definitions are adopted from \cite{10.5555/1734064}. 

\begin{notation}
    We will generally use $i, j, k, \ldots$ or $i_1, i_2, \ldots$ for number variables representing indices or array dimensions, and $a, b, c, \ldots$, $x, y, z, \ldots$, or $a_1, a_2, \ldots$, $x_1, x_2, \ldots$ for number variables representing elements of sets.
\end{notation}

For any $x,y \in \mathbb{N}$, we define \emph{pairing function} $\langle x,y\rangle$ as the following term.
\begin{equation}
\langle x,y\rangle = \frac{(x + y)(x + y + 1)}{2} + y.
\end{equation}
It is easy to show that $V^1$ proves the following properties of the pairing function:
\begin{enumerate}
 \item $\forall x_1\forall x_2\forall y_1\forall y_2\,\, (\langle x_1,y_1\rangle = \langle x_2,y_2\rangle \to x_1=x_2 \wedge y_1=y_2)$,
\item $\forall z \exists x\exists y\,\, (\langle x,y\rangle = z)$,
\item $\forall x\forall y\,\,$  $(x,y \leq \langle x,y\rangle < (x+y+1)^2)$.
\end{enumerate}
The pairing function can be iterated to encode tuples of arbitrary length. For any $k > 2$, $k$-tuples can be defined inductively by setting
\begin{equation}
\langle x_1,x_2,\ldots,x_k\rangle = \langle \ldots \langle \langle x_1, x_2\rangle, x_3 \rangle ,\ldots, x_k \rangle.
\end{equation}
We refer to the term $\langle x_1, x_2, \ldots, x_k \rangle$ as the \emph{tupling function}. Note that for any $x_i \in \{x_1, x_2, \ldots, x_k\}$,
\begin{equation}\label{alksjf}
\begin{gathered}
x_i \leq \langle x_1, x_2, \ldots, x_k \rangle < (x_1 + x_2 + \ldots + x_k + 1)^{2^k}.
\end{gathered}
\end{equation}
For any constant $m\geq 2$, we can use the tupling function to code a $k$-dimensional bit array by a single string $H$ by $ H(\langle x_1,\ldots,x_m\rangle)$; we will write $ H(\langle x_1,\ldots,x_m\rangle)$ as $H(x_1,\ldots,x_m)$.

We use the tupling function to encode functions as sets. To express that a set $H$ represents a function (i.e., a well-defined map) from sets $X_1, \ldots, X_n$ to a set $Y$, we assert that for all $x_1 \in X_1, \ldots, x_n \in X_n$, there exists a unique $y \in Y$ such that $H(x_1, \ldots, x_n, y)$ holds. For example, we say that a set $H$ is a well-defined binary map from sets $A$ and $B$ to $C$ if it satisfies the following relation:
\begin{equation}
\begin{gathered}
wMap_2(A, B, C, H) \iff \forall a \in A\, \forall b \in B\, \exists c \in C\, H(a, b, c)\, \wedge \\
\forall a \in A\, \forall b \in B\, \forall c_1 \in C\, \forall c_2 \in C\, (H(a, b, c_1) \wedge H(a, b, c_2) \rightarrow c_1 = c_2).
\end{gathered}
\end{equation}
We will then define the following two index functions:
\begin{enumerate}
\item The string function $row_1(i, Z)$, also written as $Z^{[i]}$, represents the $i$-th row of a binary array $Z$ and is defined by the following bit-definition:
\begin{equation}
\begin{gathered}
Z^{[i]}(a) = row_1(i, Z)(a) \iff (a<|Z|\wedge Z(i, a)).
\end{gathered}
\end{equation}

We use $row_1$ to encode a sequence of strings $Z^{[1]}, \ldots, Z^{[k]}$ as a single string $Z$. More generally, we can represent $m$-ary sub-arrays using the tupling function. For instance, to represent the family of sets $Z^{[i_1 \ldots i_n]}$, we define a similar function $row_n(i_1, \ldots, i_n, Z)$ such that:
\begin{equation}
\begin{gathered}
Z^{[i_1 \ldots i_n]}(a_1, \ldots, a_m) = row_n(i_1, \ldots, i_n, Z)(a_1, \ldots, a_m) \iff \\
\langle a_1, \ldots, a_m \rangle < |Z| \wedge Z(i_1, \ldots, i_n, a_1, \ldots, a_m).
\end{gathered}
\end{equation}

Using similar explicit definitions, we can mix any subsets of indices in a $k$-ary array to extract sub-arrays as needed.

\item Let $a$ be the smallest element of $Z^{[i]}$, or let $a = |Z|$ if $Z^{[i]}$ is empty. The number function $seq(i, Z)$ (also denoted by $Z^{(i)}$) is defined by the following axiom:
\begin{gather*}
a = seq(i, Z) \iff \left(a<|Z| \wedge Z(i, a) \wedge \forall b < a\, \neg Z(i, b) \right) \vee \\
\hspace{0pt} \vee \left( \forall b < |Z|\, \neg Z(i, b) \wedge a = |Z| \right).
\end{gather*}

We use the function $seq$ to allow $Z$ to encode a sequence $a_0, a_1, \ldots$ of numbers. In a similar way, we can define sequences of elements in $n$-ary arrays. For example, when $Z(i_1, \ldots, i_n, a)$ holds, we write $Z^{(i_1 \ldots i_n)}$ for the value $a$ such that $seq(i_1, \ldots, i_n, Z) = a$. Note that by explicitly combining coordinates, we can extract the smallest element corresponding to any subset of indices.
\end{enumerate}

\begin{notation}
We will always use square or round brackets in expressions like $Z^{[i]}$ or $Z^{(i)}$ to indicate a sub-dimension or an element of an array, in order to distinguish them from regular indices or indices used in abbreviations or other annotations. For sets representing functions, such as $H(x_1, \ldots, x_n, y)$, we will typically write $H(x_1, \ldots, x_n) = y$ - rather than $H^{(x_1, \ldots, x_n)} = y$ - to reflect the uniqueness of $y$.
\end{notation}
To define the maximum and minimum elements of a set $Z$, we introduce the number functions $max$ and $min$ as follows:
\begin{equation}\label{ala9s7d5tdhgdfr}
\begin{gathered}
\hspace{0pt} max(Z) = |Z| - 1, \\
min(Z) = x \iff \forall y < |Z|\, (Z(y) \rightarrow x \leq y).
\end{gathered}
\end{equation}
Given a set $X$, the \emph{census function} $\#X(n)$ is a number-valued function defined for all $n \leq |X|$, where $\#X(n)$ denotes the number of elements $x < n$ such that $x \in X$. In particular, $\#X(|X|)$ gives the total number of elements in $X$. The following relation specifies that $\#X$ is the census function for $X$:
\begin{equation}
\begin{gathered}
Census(X, \#X) \iff \#X \leq \langle |X|, |X| \rangle \wedge \#X(0) = 0 \wedge \forall x < |X|\, \big( \\
x \in X \rightarrow \#X(x+1) = \#X(x) + 1 \wedge 
x \notin X \rightarrow \#X(x+1) = \#X(x) \big).
\end{gathered}
\end{equation}
$V^1$ proves the \emph{counting axiom}, i.e. that for every set $X$ there exists such function $\#X$. Given any set $X$, consider $\Sigma^{1,b}_1$-induction on $n \leq |X|$ for the formula
\begin{equation}
\begin{gathered}
\varphi(n, X) = \exists \#X \leq \langle n, n \rangle\, \big( \#X(0) = 0 \wedge \forall\, 0 \leq x < n \\
(x \in X \rightarrow \#X(x+1) = \#X(x) + 1) \wedge 
(x \notin X \rightarrow \#X(x+1) = \#X(x)) \big).
\end{gathered}
\end{equation}

\section{Soundness of Mal'tsev algorithm in theory $V^1$}\label{alskdyf;kajdsghy;kgj}

The goal of this section is to establish an upper bound on the proof complexity of CSPs admitting a Mal'tsev term, in the sense that a family of propositional formulas expressing the unsatisfiability of CSP instances admit short proofs in the extended Frege proof system. To achieve this, we first prove the corresponding universal unsatisfiability statement in theory $V^1$, and then apply the translation theorem.

To prove the universal statement in $V^1$, it suffices to show that $V^1$ proves the soundness of Mal'tsev algorithm. Recall the idea of Mal'tsev algorithm from Section \ref{al;sdkgh;lggggg}. Assume that $V^1$ proves the following: for every $0 < l \leq m$, if $R_{l-1}$ is a compact representation of $\R_{l-1}$, then {\sc Next}$(R_{l-1}, C_l)$ outputs a compact representation of $\R_l$. Then the statement
\begin{gather*}
    R_0 = \{e_{i,a} : (i,a) \in [n] \times D_i\} \wedge R_m = \emptyset \wedge \\
    \wedge \forall\, 0 < l \leq m\,\, R_l := \text{{\sc Next}}(R_{l-1}, C_l)
\end{gather*}
constitutes a proof of unsatisfiability. 

Indeed, consider a CSP instance $\mathcal{P} = (X, A, \{C_1, \ldots, C_m\})$. If for all $0 < l \leq m$, the set $R_l$ is a compact representation of $\R_l$, the initial set $R_0$ is chosen as in Example~\ref{alksdj;asjfegh}, and the compact representation $R_m$ of the solution set to the original instance is empty, then the instance has no solution. Importantly, to establish unsatisfiability, we do not need to prove that each $R_l$ generates the full solution set $\R_l$. It is sufficient to show that the signature of $R_l$ coincides with the signature of $\R_l$ and $R_l\subseteq \R_l$. Then if $R_m$ is empty, the signature of the solution set $\R_m$ is also empty, which implies that the solution set itself is empty. This is a significant advantage, as it allows us to avoid replicating the full proof of Theorem~\ref{alalksdjasd}, which involves exponentially large sets.

To prove the soundness of Mal'tsev algorithm, we must first show that the procedure {\sc Next} can be formalized using the induction available in $V^1$ - that is, the sequence of sets $R_0, \ldots, R_m$ is constructible within $V^1$. Next, we need to show that $V^1$ proves the correctness of the {\sc Next} procedure. Specifically, for every $0 < l \leq m$, $V^1$ should prove that if $R_{l-1} \subseteq \R_{l-1}$ and $\operatorname{Sig}(R_{l-1}) = \operatorname{Sig}(\R_{l-1})$, then {\sc Next}$(R_{l-1}, C_l)$ produces a set $R_l \subseteq \R_l$ such that $\operatorname{Sig}(R_l) = \operatorname{Sig}(\R_l)$.

\subsection{Formalization of a CSP instance}\label{alskdj;lkys}

To simplify the formalization for the reader, we restrict our attention in this paper to constraint languages in which all constraints are at most binary. There are two justifications for this restriction. The first point is that the formalization presented in this paper can be easily extended to any finite constraint language $\Gamma$, since the maximum arity of the constraint relations can be fixed in advance. We will elaborate on this point in more detail during the formalization. 

The second point concerns classical log-space reductions between constraint languages, which are commonly referred to as \emph{$pp$-constructibility}. While we do not introduce this notion in detail here, an extensive treatment can be found in~\cite{barto_et_al}. These reductions preserve both computational complexity and proof complexity of CSP problems; see~\cite{10.1145/3265985}. It is known that for two constraint languages $\Gamma$ and $\Gamma'$, the language $\Gamma$ $pp$-constructs $\Gamma'$ if and only if $Pol(\Gamma')$ contains operations satisfying all height-$1$ identities satisfied by operations in $Pol(\Gamma)$. Since a Mal'tsev term is idempotent, Mal'tsev identities can be written in height-$1$ form: $\mu(x,y,y) = \mu(y,y,x) = \mu(x,x,x)$. 
\begin{thm}[\cite{barto_et_al}]
    For any constraint language $\Gamma$, there exists an idempotent constraint language $\Gamma'$ such that:
    \begin{enumerate}
        \item all relations in $\Gamma'$ are of arity at most two, and
        \item $\Gamma$ and $\Gamma'$ $pp$-construct each other.
    \end{enumerate}
\end{thm}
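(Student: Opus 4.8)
The plan is to obtain $\Gamma'$ by two successive reductions, each of which leaves the mutual $pp$-constructibility class unchanged: first make the language idempotent, then collapse every arity down to at most $2$. Throughout I use the standard closure properties of $pp$-constructibility (for which see \cite{barto_et_al}): it is preserved under adjoining a $pp$-definable relation, under passing to a reduct, under taking $pp$-powers, and under homomorphic equivalence of the associated relational structures. Equivalently, by the characterization recalled just above, $\Gamma$ and $\Gamma'$ $pp$-construct each other precisely when $\operatorname{Pol}(\Gamma)$ and $\operatorname{Pol}(\Gamma')$ satisfy exactly the same height-$1$ identities, and this is the invariant we must maintain.

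\emph{Step 1 (idempotency).} Let $\mathbb{A}=(A,\Gamma)$ and let $\mathbb{B}$ be a core of $\mathbb{A}$; since $\mathbb{B}$ is homomorphically equivalent to $\mathbb{A}$, the two structures $pp$-construct each other. Expand $\mathbb{B}$ by the singleton unary relations $\{b\}$ for every element $b$ of its universe, obtaining $\mathbb{B}^{+}$. The constant-relations theorem for cores (\cite{barto_et_al}) gives that $\mathbb{B}$ $pp$-constructs $\mathbb{B}^{+}$, while $\mathbb{B}^{+}$ $pp$-constructs $\mathbb{B}$ because $\mathbb{B}$ is a reduct of $\mathbb{B}^{+}$; hence $\mathbb{A}$, $\mathbb{B}$ and $\mathbb{B}^{+}$ all $pp$-construct one another. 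Moreover $\mathbb{B}^{+}$ is idempotent, since any polymorphism of $\mathbb{B}^{+}$ must preserve each singleton $\{b\}$. The one point requiring care here is that the core must be taken before the singletons are added: it is exactly the core hypothesis that makes the constant-relations theorem applicable, so one cannot simply adjoin the singletons to $\mathbb{A}$ itself.

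\emph{Step 2 (arity reduction).} Let $\Gamma_{1}$ be the (finite, idempotent) constraint language of $\mathbb{B}^{+}$ over the finite domain $A_{1}$, let $k$ be the largest arity occurring in $\Gamma_{1}$, and fix some $d\in A_{1}$. Work inside the $k$-th $pp$-power of $\mathbb{B}^{+}$, whose universe is $A_{1}^{k}$, and single out the following relations on it, each of arity at most $2$: for each $R\in\Gamma_{1}$ of arity $r$, the unary relation $R^{\circ}=\{(a_{1},\dots,a_{r},d,\dots,d):(a_{1},\dots,a_{r})\in R\}$; for each $s$ with $1\le s\le k$, the binary relation $E_{s}=\{(\bar u,\bar v)\in A_{1}^{k}\times A_{1}^{k}: u_{1}=v_{s}\}$; and every singleton $\{\bar a\}$ for $\bar a\in A_{1}^{k}$. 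Each of these is $pp$-definable over $\mathbb{B}^{+}$ in the block variables, using only equality atoms, the relations of $\Gamma_{1}$, and the singletons of $\Gamma_{1}$ (for the $d$-padding in $R^{\circ}$ and for the $\{\bar a\}$). Let $\mathbb{C}$ be the structure with exactly these relations; it is a $pp$-power of $\mathbb{B}^{+}$ with a $pp$-definable choice of relations, so $\mathbb{B}^{+}$ $pp$-constructs $\mathbb{C}$, every relation of $\mathbb{C}$ is at most binary, and $\mathbb{C}$ is idempotent because it contains all singletons. For the converse, to see that $\mathbb{C}$ $pp$-constructs $\mathbb{B}^{+}$, work in the first $pp$-power of $\mathbb{C}$ (universe again $A_{1}^{k}$): for each $R\in\Gamma_{1}$ of arity $r$, $pp$-define the $r$-ary relation $R^{\natural}$ by declaring that $R^{\natural}(y^{1},\dots,y^{r})$ holds iff there is $w$ with $R^{\circ}(w)$ and $E_{s}(y^{s},w)$ for all $s\le r$; unwinding, this says exactly $(y^{1}_{1},\dots,y^{r}_{1})\in R$. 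Let $\mathbb{A}_{1}^{\natural}$ be the structure carrying these relations $R^{\natural}$. Then the first-coordinate map $\bar u\mapsto u_{1}$ is a homomorphism $\mathbb{A}_{1}^{\natural}\to\mathbb{B}^{+}$ and the diagonal map $a\mapsto(a,\dots,a)$ is a homomorphism $\mathbb{B}^{+}\to\mathbb{A}_{1}^{\natural}$, so $\mathbb{A}_{1}^{\natural}$ and $\mathbb{B}^{+}$ are homomorphically equivalent; since $\mathbb{C}$ $pp$-constructs $\mathbb{A}_{1}^{\natural}$, it $pp$-constructs $\mathbb{B}^{+}$.

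Combining the two steps, $\mathbb{C}$ and $\mathbb{A}$ $pp$-construct each other, $\mathbb{C}$ is idempotent, and all its relations are at most binary, so taking $\Gamma'$ to be the constraint language of $\mathbb{C}$ proves the theorem. I expect the main obstacle to lie in Step 2: one has to choose $R^{\circ}$, the $E_{s}$ and the $\natural$-definitions so that \emph{both} directions go through simultaneously, and in particular to recognise that $\mathbb{B}^{+}$ is recovered from $\mathbb{C}$ only up to homomorphic equivalence (the domain must shrink back from $A_{1}^{k}$ to $A_{1}$), not on the nose — this is where the homomorphic-equivalence move is genuinely needed. The subtlety in Step 1 is comparatively minor: it is just that the core must be formed first so that the constant-relations theorem applies.
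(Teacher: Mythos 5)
The paper does not prove this statement; it is quoted as a known result from \cite{barto_et_al}, so there is no internal proof to compare against. Your argument is the standard one from that source and, as far as I can check, it is correct: the core-plus-singletons step correctly reduces to the idempotent case (and you rightly observe that the core must be formed before adjoining singletons, since that is what licenses the constant-relations theorem), and the arity-reduction step via the $k$-th $pp$-power with the padded unary relations $R^{\circ}$, the coordinate-linking binary relations $E_{s}$, and all singletons of $A_{1}^{k}$ is exactly the usual encoding; the recovery of $\mathbb{B}^{+}$ up to homomorphic equivalence via the first-coordinate and diagonal maps is the right mechanism, and including the singletons of $\Gamma_{1}$ is what makes the $d$-padding in $R^{\circ}$ $pp$-definable. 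The only point worth flagging is bookkeeping rather than substance: the unary relations of $\Gamma_{1}$ (including the adjoined singletons) must themselves be run through the $R^{\circ}/R^{\natural}$ translation so that both homomorphisms of the equivalence respect the full signature of $\mathbb{B}^{+}$, which your construction does handle since arity-$1$ relations are a special case of your scheme.
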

This implies that for any constraint language $\Gamma$ preserved by a Mal'tsev term, one can construct an idempotent constraint language $\Gamma'$ with only binary relations such that there exists an operation $\mu'\in Pol(\Gamma')$ satisfying $\mu'(x,y,y) = \mu'(y,y,x) = \mu'(x,x,x) = x$, which is a Mal'tsev term.

\begin{notation}
To simplify notation, we use capital letters to denote relations on sets and numbers, and lowercase letters to denote functions. When arguments of a relation are clear from context and do not affect the meaning, we may omit them. Elements of sets are indexed starting from $0$, while indices not referring to elements of sets (e.g., indices of a sequence of relations) start from $1$.
\end{notation}

\begin{notation}
We denote by $[s]$ the set $X$ of size $s$ such that $X(i)$ holds for every $i < s$.
\end{notation}

Consider any constraint language $\Gamma$ with at most binary relations. For an instance of CSP($\Gamma$) with $n$ variables $X=\{x_1,\ldots,x_n\}$, every constraint is either of the form $C = (x_i, D)$, where $D$ is a unary relation in $\Gamma$, or $C = (x_i, x_j, E)$, where $E$ is a binary relation in $\Gamma$. We assume that for each variable $x_i \in X$ there is exactly one unary relation, and for each ordered pair $(x_i, x_j)$ there is at most one binary relation. (In cases with multiple constraints for the same variables and pairs of variables, we can reduce them by computing intersections.) Each unary relation can be interpreted as the domain of the variable $x_i$, and each binary constraint as a directed edge between the variables $(x_i, x_j)$. This representation allows us to treat any CSP instance as a homomorphism problem between relational structures that are specific types of directed graphs. 

\begin{df}\label{DIGRAPHSS}
A \emph{directed input graph} of size $n$ is a pair $\mathcal{X} = ([n], E_\mathcal{X})$, where $E_\mathcal{X}(i,j)$ is a binary relation on the vertex set $[n]$ indicating the presence of an edge from $i$ to $j$. A \emph{target digraph with domains} is a tuple of sets $\ddot{\mathcal{A}} = ([q], V_{\ddot{\mathcal{A}}}, E_{\ddot{\mathcal{A}}})$, where:
\begin{itemize}
    \item $V_{\ddot{\mathcal{A}}} < \langle n, q \rangle$ is the superdomain. The predicate $V_{\ddot{\mathcal{A}}}(i, a)$ indicates that the domain for variable $x_i$ contains the element $a$. For brevity, we write $D_i := V_{\ddot{\mathcal{A}}}^{[i]}$.
    
    \item $E_{\ddot{\mathcal{A}}} < \langle n, n, q, q \rangle$ encodes the allowed edges between domains. That is, $E_{\ddot{\mathcal{A}}}^{[ij]}(a,b)$ holds only if
    \begin{equation}
        \begin{gathered}
            E_{\ddot{\mathcal{A}}}^{[ij]}(a,b) \rightarrow D_i(a) \wedge D_j(b).
        \end{gathered}
    \end{equation}
\end{itemize}
We denote by the predicate DG$(\Theta)$ that a pair of sets $\Theta = (\mathcal{X}, \ddot{\mathcal{A}})$ satisfies all the above conditions. This representation allows us to model homomorphisms from $\mathcal{X}$ to $\ddot{\mathcal{A}}$, using edge relations $E_{\ddot{\mathcal{A}}}^{[ij]}$ and domain sets $D_i$ for all vertices $x_1, \ldots, x_n$.
\end{df}

Let us fix $b_h := \langle n, q \rangle$ as the upper bound for sets encoding homomorphisms.

\begin{df}[Homomorphism from a digraph $\mathcal{X}$ to a digraph with domains $\ddot{\mathcal{A}}$]\label{krjfhpakrhfgdrg}
A well-defined map $H < b_h$ from $[n]$ to $[q]$ is called a \emph{homomorphism between the input digraph} $\mathcal{X} = ([n], E_{\mathcal{X}})$ of size $n$ and the \emph{target digraph with domains} $\ddot{\mathcal{A}} = ([q], V_{\ddot{\mathcal{A}}}, E_{\ddot{\mathcal{A}}})$, where $V_{\ddot{\mathcal{A}}} < \langle n, q \rangle$, if $H$ maps each $i \in V_{\mathcal{X}}$ to its corresponding domain $D_i$ and preserves edges.
\begin{equation}\label{HOMINSTAG}
\begin{gathered}
Hom(\mathcal{X}, \ddot{\mathcal{A}}, H) \iff  
wMap_1([n], [q], H)\,\wedge (\forall i < n\, \forall a < q\, (H(i) = a \rightarrow D_i(a)))\,\wedge \\
\forall i, j < n\, \forall a, b < q\, 
(E_{\mathcal{X}}(i, j) \wedge H(i) = a \wedge H(j) = b \rightarrow E^{[ij]}_{\ddot{\mathcal{A}}}(a, b))
\end{gathered}
\end{equation}
The existence of such $H$ can be expressed by the $\Sigma^{1,b}_{1}$-formula $\exists H < b_h\,Hom(\mathcal{X}, \ddot{\mathcal{A}}, H)$. We will sometimes abbreviate the expression $\exists H < b_h,Hom(\mathcal{X}, \ddot{\mathcal{A}}, H)$ as $Hom(\mathcal{X}, \ddot{\mathcal{A}})$.
\end{df}

\begin{rem}
    Consider any finite constraint language $\Gamma$, and let $s$ be the maximum arity of relations in $\Gamma$. Then, any instance of CSP($\Gamma$) can be viewed as a homomorphism problem between an input hypergraph $\mathcal{X}$ and a target hypergraph with domains $\ddot{\mathcal{A}}$, where the cardinality of each edge is at most $s$. Specifically, $\mathcal{X}$ can be represented as a tuple of sets $([n], E^2_{\mathcal{X}},\ldots,E^s_{\mathcal{X}})$, and $\ddot{\mathcal{A}}$ as $([q], V_{\ddot{\mathcal{A}}}, E^2_{\ddot{\mathcal{A}}},\ldots,E^s_{\ddot{\mathcal{A}}})$. The definition of a homomorphism then extends in a straightforward manner to this setting.
\end{rem}

Note that with Definitions \ref{DIGRAPHSS} and \ref{krjfhpakrhfgdrg}, we no longer need to treat unary constraints as explicit constraints - they are now directly encoded in the structure of the target digraph. Therefore, we can focus solely on binary constraints. We denote the number of edges in the relation $E_{\mathcal{X}}$, which can be counted using the census function $\#E_{\mathcal{X}}$, by $m$.

The solution set for the instance $\Theta = (\mathcal{X}, \ddot{\mathcal{A}})$ consists of all homomorphisms from $\mathcal{X}$ to $\ddot{\mathcal{A}}$, which we denote by $\mathscr{R}_{\Theta}$. Since this set can be exponentially large, we define it in $V^1$ only indirectly, using a $\Sigma^{1,b}_0$-formula:
\begin{equation}
H \in \mathscr{R}_{\Theta} \iff Hom(\mathcal{X}, \ddot{\mathcal{A}}, H).
\end{equation}

\subsection{Formalization of universal algebra notions}\label{askfdh;qweyi}

The algorithm applies to any finite algebra that possesses a Mal'tsev term. A fixed algebra $\mathbb{A}$ of size $q$ with a Mal'tsev term $\mu$ can be represented as a pair of sets $([q], M)$, where $M$ encodes the ternary operation $\mu$. Formally, we define:
\begin{equation}
\begin{gathered}
Maltsev([q], M) \iff wMap_3([q],[q],[q],[q], M) \wedge \forall a<q \forall b<q\,\, M(a,a,b) = M(b,a,a) = b.
\end{gathered}
\end{equation}

For any subset $R'\subseteq A^n$, its signature can be encoded by a set $S(i,a,b)$ of size at most $b_s:=\langle n,$ $q,$ $q\rangle< (n+2q+1)^8$. For a compact representation $R$ of $R'$, for any tuple $(i,a,b)$ in a signature, we need to select just two elements $T_a, T_b$ from $R'$ witnessing the tuple, which we can represent as well-defined maps from $[n]$ to $[q]$. In the case where $a=b$, we assume that for the tuple $(i,a,a)$ we consider a pair $(T,T)$ with $T(i)=a$. Thus, we can encode a compact representation $R$ as a set $R(i,a,b,j,c)$, where the first three indices correspond to a tuple $(i,a,b)$ witnessed by maps $T_a, T_b$ from $[n]$ to $[q]$, and the last two indices define the map $T_a$. For the dual tuple $(i,b,a)$, the last two indices of $R(i,b,a,j,c)$ define the map $T_b$. With this encoding, every index $\langle i,a,b\rangle $ corresponds to exactly one map.

\begin{rem}\label{a=++++}
Note that in Definition \ref{askdjfh;lkasf} of the signature, two tuples $(i,a,b)$ and $(i,b,a)$ may correspond to two different pairs $(t_a,t_b)$ and $(t_b',t_a')$. In our case, for both tuples we consider the same pair of maps $(T_a, T_b)$ and $(T_b, T_a)$, so we ensure they are stored under unique indices. In the case where $a=b$, the indices $\langle i,a,b\rangle = \langle i,b,a\rangle$, so we must ensure that the same map is indeed stored under both tuples.
\end{rem}
Then we say that a set $S$ is a signature for a compact representation $R$ if, for every tuple $(i,a,b)$ such that $S(i,a,b)$ holds, the sets $R_{[iab]}$ and $R_{[iba]}$ contain well-defined maps from $[n]$ to $[q]$ that witness this tuple.
\begin{equation}\label{eq1232346}
\begin{gathered}
    Sig_{CR}(S, R) \iff \forall i<n\forall a<q\forall b<q\,\, \big[ S(i,a,b)\longleftrightarrow
R^{[iab]}(i)=a \wedge R^{[iba]}(i)=b \wedge \\
\forall j<i\,\,R^{[iab]}(j) = R^{[iba]}(j)\wedge wMap_1([n], [q], R^{[iab]})\wedge wMap_1([n], [q], R^{[iba]})\big].
\end{gathered}
\end{equation}
For any CSP instance $\Theta=(\mathcal{X},\ddot{\mathcal{A}})$, its solution set $\mathscr{R}_{\Theta}$ consists of all homomorphisms from $\mathcal{X}$ to $\ddot{\mathcal{A}}$. We say that a set $S$ is a signature for the solution set $\mathscr{R}_{\Theta}$ if, for every tuple $(i,a,b)$ such that $S(i,a,b)$ holds, there exist two homomorphisms $T_a, T_b$ from $\mathcal{X}$ to $\ddot{\mathcal{A}}$ that witness this tuple.
\begin{equation}\label{eqj3jkr}
\begin{gathered}
   Sig_{SS}(S, \mathcal{X},\ddot{\mathcal{A}}) \iff \forall i<n\forall a<q\forall b<q\,\, \big[S(i,a,b)\longleftrightarrow
    \exists T_a,T_b\leq b_h\\
T_a(i)=a \wedge T_b(i)=b \wedge \forall j<i\,\, T_a(j) = T_b(j)\wedge \\
\wedge Hom(\mathcal{X},\ddot{\mathcal{A}}, T_a)\wedge Hom(\mathcal{X},\ddot{\mathcal{A}}, T_b)\big].
\end{gathered}
\end{equation}
We say that $R$ is a subset of $\mathscr{R}_{\Theta}$ if
\begin{equation}
    \begin{gathered}
        R\subseteq \mathscr{R}_{\Theta} \iff \forall i<n\forall a<q\forall b<q \,\, Hom(\mathcal{X},\ddot{\mathcal{A}}, R^{[iab]}).
    \end{gathered}
\end{equation}
Finally, we say that $Sig(R) = Sig(\mathscr{R}_{\Theta})$ if
\begin{equation}\label{akajsgdfl7}
    \begin{gathered}
        Sig(R)=Sig(\mathscr{R}_{\Theta}) \iff \exists S< b_s\, Sig_{CR}(S,R)\wedge Sig_{SS}(S,\Theta).
    \end{gathered}
\end{equation}
Further, for any $n$, we introduce a string function $\mu$ that takes any three maps from $[n]$ to $[q]$ and returns a new map $H$, defined by its bit-definition for all $i < n$ and $a < q$:
\begin{equation}\label{alalkdhjhfr}
  \begin{gathered}
     \mu(H_1, H_2, H_3)(i,a)\iff \exists a_1<q\exists a_2<q\exists a_3<q\,M(a_1,a_2,a_3)=a\wedge\\
       \hspace {0pt}H_1(i)=a_1\wedge H_2(i)=a_2\wedge H_3(i)=a_3\wedge\\
       wMap_1([n],[q],H_1) \wedge wMap_1([n],[q],H_2)\wedge wMap_1([n],[q],H_3)
  \end{gathered}
\end{equation}
Note that $\mu$ encodes a well-defined map rather than an arbitrary binary array. It is based on the fixed set $M$ representing the Mal'tsev polymorphism, and 
\begin{equation}
    \begin{gathered}
V^1\vdash\forall i\exists ! a\, \big( \exists a_1<q\exists a_2<q\exists a_3<q\,M(a_1,a_2,a_3)=a\wedge H_1(i)=a_1\wedge H_2(i)=a_2\wedge H_3(i)=a_3\big),
    \end{gathered}
\end{equation}
since all $H_1, H_2, H_3$, and $M$ are well-defined maps.

\subsection{Correspondence between CSP instance and Mal'tsev algebra}\label{lh;alkdhg;lih;}

We say that a set $F$ is a well-defined map of arity $3$ on the set $[q]$ that preserves a $2$-ary relation $R$ on $[q] \times [q]$ if it satisfies the following relation.
\begin{equation}
 \begin{gathered}
 Pol_{3,2}([q], F, R) \iff wMap_3([q],[q],[q],[q],F) \wedge \forall a_1,a_2,a_3<q\forall b_1,b_2,b_3<q, \\
 \hspace{0pt}\forall c_1,c_2<q\,\,R(a_1,b_1)\wedge R(a_2,b_2) \wedge R(a_3,b_3) \wedge\\
 \hspace{0pt}F(a_1,a_2,a_3)=c_1\wedge F(b_1,b_2,b_3)=c_2 \rightarrow R(c_1,c_2).
 \end{gathered}
\end{equation}
In the same fashion, we can define a ternary polymorphism for unary relations. Finally, we introduce the $\Sigma^{1,b}_0$-relation indicating that a CSP instance $\Theta = (\mathcal{X}, \ddot{\mathcal{A}})$ is an instance of a relational structure that corresponds to the algebra $\A = ([q], M)$.
\begin{equation}
    \begin{gathered}
         Inst(\mathcal{X}, [l], V_{\ddot{\mathcal{A}}},E_{\ddot{\mathcal{A}}},[q], M)\iff DG(\mathcal{X},\ddot{\mathcal{A}})\wedge [l]=[q]\wedge\\
\forall i, j<n \,Pol_{3,2}([q],M,E^{[ij]}_{\ddot{\mathcal{A}}})\wedge \forall i<n \,Pol_{3,1}([q],M,D_i).
    \end{gathered}
\end{equation}
The following lemma is very simple to prove.
\begin{lem}\label{simplemm}
$V^1$ proves that for any CSP instance $\Theta$ corresponding to algebra $([q], M)$, and any homomorphisms $H_1, H_2, H_3$ from $\mathcal{X}$ to $\ddot{\mathcal{A}}$, the map $H = \mu(H_1, H_2, H_3)$ is again a homomorphism from $\mathcal{X}$ to $\ddot{\mathcal{A}}$.
\end{lem}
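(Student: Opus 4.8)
The plan is to verify directly that $H := \mu(H_1, H_2, H_3)$ satisfies all three conjuncts in the definition (\ref{HOMINSTAG}) of $Hom(\mathcal{X}, \ddot{\mathcal{A}}, H)$, using only the bit-definition (\ref{alalkdhjhfr}) of $\mu$, the assumption that each $H_k$ is a homomorphism, and the two polymorphism clauses packaged into $Inst$. All the reasoning is $\Sigma^{1,b}_0$ with witnesses bounded by $q$, so no induction is needed and the argument goes through in $V^1$.

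First, $H$ is a well-defined map from $[n]$ to $[q]$ with $H < b_h$: this is exactly the observation recorded right after (\ref{alalkdhjhfr}), namely that $V^1$ proves $\forall i\, \exists! a\,\big(\exists a_1,a_2,a_3<q\; M(a_1,a_2,a_3)=a \wedge H_1(i)=a_1 \wedge H_2(i)=a_2 \wedge H_3(i)=a_3\big)$, which holds because $M, H_1, H_2, H_3$ are themselves well-defined maps; the bound $H < b_h$ is the bounding term of the bit-definition. Hence $wMap_1([n],[q],H)$.

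Second, fix $i < n$ and set $a := H(i)$. By the bit-definition of $\mu$ there are (unique) $a_1, a_2, a_3 < q$ with $H_k(i) = a_k$ for $k=1,2,3$ and $M(a_1,a_2,a_3) = a$. Since each $H_k$ is a homomorphism, the middle conjunct of (\ref{HOMINSTAG}) gives $D_i(a_k)$ for each $k$. Since $\Theta$ corresponds to $([q], M)$, the clause $Pol_{3,1}([q], M, D_i)$ of $Inst$ applies, so from $D_i(a_1) \wedge D_i(a_2) \wedge D_i(a_3)$ and $M(a_1,a_2,a_3) = a$ we conclude $D_i(a)$, which is the middle conjunct for $H$. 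Third, fix $i,j < n$ with $E_{\mathcal{X}}(i,j)$, and set $a := H(i)$, $b := H(j)$; extract coordinate witnesses $a_k := H_k(i)$ and $b_k := H_k(j)$, so $M(a_1,a_2,a_3)=a$ and $M(b_1,b_2,b_3)=b$. Since each $H_k$ is a homomorphism and $E_{\mathcal{X}}(i,j)$ holds, the last conjunct of (\ref{HOMINSTAG}) yields $E^{[ij]}_{\ddot{\mathcal{A}}}(a_k,b_k)$ for $k=1,2,3$. The clause $Pol_{3,2}([q], M, E^{[ij]}_{\ddot{\mathcal{A}}})$ of $Inst$ then gives $E^{[ij]}_{\ddot{\mathcal{A}}}(a,b)$ from these three edge-memberships together with $M(a_1,a_2,a_3)=a$ and $M(b_1,b_2,b_3)=b$. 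This is the last conjunct of $Hom(\mathcal{X}, \ddot{\mathcal{A}}, H)$, completing the verification.

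There is essentially no obstacle here: the only points requiring a word of care are (i) that $H$ really is a total function bounded by $b_h$, which is the already-noted consequence of $M, H_1, H_2, H_3$ being well-defined maps and of the bounding term in the bit-definition of $\mu$, and (ii) the clean extraction of the coordinate witnesses $a_1,a_2,a_3$ (resp. $b_1,b_2,b_3$) from (\ref{alalkdhjhfr}), which relies on their uniqueness. Everything else is a routine application of the polymorphism conditions $Pol_{3,1}$ and $Pol_{3,2}$ contained in $Inst$.
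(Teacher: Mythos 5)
Your proof is correct and follows essentially the same route as the paper's: both reduce the claim to the polymorphism conditions $Pol_{3,1}([q],M,D_i)$ and $Pol_{3,2}([q],M,E^{[ij]}_{\ddot{\mathcal{A}}})$ packaged in $Inst$, applied coordinatewise to the values extracted from the bit-definition of $\mu$ (the paper merely phrases the edge case contrapositively, as a contradiction). Your version is in fact somewhat more complete, since you also verify $wMap_1([n],[q],H)$ and the domain conjunct explicitly rather than leaving them implicit.
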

\begin{proof}
Since $M$ encodes a polymorphism, it follows that all domains $D_0, \ldots, D_{n-1}$ are subalgebras of $\A$, and each relation $E_{\mathcal{A}}^{[ij]}$ - the set of edges from $D_i$ to $D_j$ - is a subalgebra of $D_i \times D_j$, because it is compatible with $M$. The proof proceeds by contradiction: suppose there exists an edge $(x_i, x_j) \in E_{\mathcal{X}}$ such that $H$ does not map it to an edge in $\ddot{\mathcal{A}}$. However, since all homomorphisms $H_1$, $H_2$, and $H_3$ map $(x_i, x_j)$ to edges in $E^{[ij]}_{\ddot{\mathcal{A}}}$, this failure can occur only if $M$ does not preserve the relation $E^{[ij]}_{\ddot{\mathcal{A}}}$.
\end{proof}

\subsection{Formalization of Mal'tsev algorithm in $V^1$}\label{alskhf;shgf}

Consider any CSP instance $\Theta=(\mathcal{X},\ddot{\mathcal{A}})$ such that $\Theta$ is an instance of a relational structure corresponding to the algebra $\A=([q], M)$. Then consider the sequence of CSP instances $\Theta_0, \ldots, \Theta_m$ with $\ddot{\mathcal{A}}$ as the target digraph for all $m$,
$\mathcal{X}_0:=([n], E_{\mathcal{X}_0}), \ldots, \mathcal{X}_m:=([n], E_{\mathcal{X}_m}) = \mathcal{X}$, where $E_{\mathcal{X}_0}$ is an empty set, and each $\mathcal{X}_{i-1}$ is constructed from $\mathcal{X}_{i}$ by removing one edge, i.e., for all $0 < l \leq m$:
\begin{equation}
    \begin{gathered}
        \forall i,j<n, \,\, E_{\mathcal{X}_{l-1}}(i,j) \iff E_{\mathcal{X}_{l}}(i,j) \wedge min(E_{\mathcal{X}_l}) < \langle i,j\rangle.
    \end{gathered}
\end{equation}
We remove edges one by one, always selecting the smallest remaining edge. We gather all compact representations $R_0, \ldots, R_m$ of $\mathscr{R}_{\Theta_0}, \ldots, \mathscr{R}_{\Theta_m}$ into a single set $R$ such that, for all $i < n$ and for all $a, b \in D_i$ and $d_j = \min(D_j)$,
\begin{equation}
    \begin{gathered}\label{askjfh;ashfd}
R^{[0]}(i, a,b ,i,a)\wedge \forall j \neq i < n \, R^{[0]}( i, a,b,j,d_j), \\
\forall 0 < l \leq m\,\, R^{[l]} = next(R^{[l-1]}, \min(E_{\mathcal{X}_l}), E^{[\min(E_{\mathcal{X}_l})]}_{\ddot{\mathcal{A}}}),
    \end{gathered}
\end{equation}
where the function $next(R^{[l-1]}, \min(E_{\mathcal{X}_l}), E^{[\min(E_{\mathcal{X}_l})]}_{\ddot{\mathcal{A}}})$ will be defined in the following sections.

\begin{rem}\label{&&^*&}
In this encoding, for every pair of indices $\langle i,a,b\rangle$ and $\langle i,b,a\rangle$, we store two maps $T_a$ and $T_b$ that agree on all coordinates $j < i$, with $T_a(i) = a$ and $T_b(i) = b$. In the case where $a = b$, both indices $\langle i,a,b\rangle$ and $\langle i,b,a\rangle$ correspond to a single map $T$ with $T(i) = a$. We will maintain this convention, as we always work with well-defined maps, and all subsequent sets will be constructed accordingly.
\end{rem}

\begin{rem}
In the construction (\ref{askjfh;ashfd}) of $R^{[0]}$, for every $i < n$ we enforced that $a, b \in D_i$. From this point onward, we construct further levels $R^{[l]}$ based on the previous ones using the Mal'tsev operation, which preserves each $D_i$. Therefore, in what follows, instead of $a, b \in D_i$, we can simply write $a, b < q$.
\end{rem}

\begin{rem}
    For reacher languages $\Gamma$, where $s$ is the fixed maximum arity of relations in $\Gamma$ and $\#E^2_{\mathcal{X}} = m_2,\ldots,\#E^s_{\mathcal{X}} = m_s$, we would define a sequence of instances 
\begin{gather*}
    \mathcal{X}_0 = ([n], E^2_{\mathcal{X}_0},E^3_{\mathcal{X}_0},\ldots,E^s_{\mathcal{X}_0}),\ldots,\mathcal{X}_{m_2} = ([n], E^2_{\mathcal{X}_{m_2}},E^3_{\mathcal{X}_0},\ldots,E^s_{\mathcal{X}_0}),\\
    \mathcal{X}_{m_2+1} = ([n], E^2_{\mathcal{X}_{m_2}},E^3_{\mathcal{X}_1},\ldots,E^s_{\mathcal{X}_0}),\ldots,\mathcal{X}_{m_2+m_3} = ([n], E^2_{\mathcal{X}_{m_2}},E^3_{\mathcal{X}_{m_3}},\ldots,E^s_{\mathcal{X}_0}),\\
    \ldots \\
\mathcal{X}_{m_2+\ldots m_{s-1}+1} = ([n], E^2_{\mathcal{X}_{m_2}},E^3_{\mathcal{X}_{m_3}},\ldots,E^s_{\mathcal{X}_0}),\ldots,\mathcal{X}_{m_2+\ldots +m_s} = ([n], E^2_{\mathcal{X}_{m_2}},E^3_{\mathcal{X}_{m_3}},\ldots,E^s_{\mathcal{X}_{m_s}}).
    \end{gather*}    
We begin with the same set $R^{[0]}$, and construct it incrementally: first up to level $m_2$, then from level $m_2$ to $m_3$, and so on, from $m_{s-1}$ to $m_s$, using $s$ different functions $next^2, \ldots, next^s$. These functions are structurally similar, and each can be formalized within polynomially large sets, as will become clear later. Therefore, we will not return to this point in the subsequent discussion. 
\end{rem}

 \begin{lem}
      $V^1$ proves that $R^{[0]}$ generates $\mathscr{R}_{\Theta_0}$.
 \end{lem}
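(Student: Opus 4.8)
The plan is to show that $R^{[0]}$, as constructed in~(\ref{askjfh;ashfd}), is a representation of $\mathscr{R}_{\Theta_0}$ in the sense formalized in Section~\ref{askfdh;qweyi}; that is, $V^1$ proves $R^{[0]}\subseteq\mathscr{R}_{\Theta_0}$ and $\operatorname{Sig}(R^{[0]})=\operatorname{Sig}(\mathscr{R}_{\Theta_0})$. (As explained in the text at the start of Section~\ref{alskdyf;kajdsghy;kgj}, this is exactly what "generates" should mean here, since we deliberately avoid replicating Theorem~\ref{alalksdjasd} and its exponentially large objects; the genuine generation statement of Theorem~\ref{alalksdjasd} applied to $\R_0=\A^n$ and the representation from Example~\ref{alksdj;asjfegh} is the mathematical content being mirrored.) First I would note that since $\mathcal{X}_0=([n],E_{\mathcal{X}_0})$ has no edges, the homomorphism condition~(\ref{HOMINSTAG}) reduces, for $\Theta_0$, to the single requirement that $H$ is a well-defined map from $[n]$ to $[q]$ with $H(i)\in D_i$ for all $i<n$; there is no edge constraint to check. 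This is a $\Sigma^{1,b}_0$ fact that $V^1$ verifies immediately from the definitions.

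Next I would verify $R^{[0]}\subseteq\mathscr{R}_{\Theta_0}$. By the encoding convention (Remarks~\ref{a=++++}, \ref{&&^*&}), for each triple $(i,a,b)$ with $a,b\in D_i$ the row $R^{[0][iab]}$ is the map sending $i\mapsto a$ and $j\mapsto d_j=\min(D_j)$ for $j\ne i$. Using that $d_j\in D_j$ (which follows from $\min$ picking an element of a nonempty set — here $D_j\ne\emptyset$ because the instance corresponds to an algebra and is assumed consistent, or more directly because $a\in D_i$ is assumed and each $D_j$ is a subalgebra hence nonempty) and that $a\in D_i$, each such row satisfies the reduced homomorphism condition above, so $\operatorname{Hom}(\mathcal{X},\ddot{\mathcal{A}},R^{[0][iab]})$ holds. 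This is again $\Sigma^{1,b}_0$ reasoning, uniformly in $i,a,b$, so $V^1$ concludes $R^{[0]}\subseteq\mathscr{R}_{\Theta_0}$.

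For the signature equality I would exhibit the witnessing set $S<b_s$ required by~(\ref{akajsgdfl7}): take $S(i,a,b)$ to hold iff $i<n$, $a<q$, $b<q$, and $a,b\in D_i$. Then I would check the two conjuncts. For $\operatorname{Sig}_{CR}(S,R^{[0]})$~(\ref{eq1232346}): given $S(i,a,b)$, the rows $R^{[0][iab]}$ and $R^{[0][iba]}$ are the maps described above, which agree on all $j<i$ (both equal $d_j$ there, since $j\ne i$), have value $a$ resp.\ $b$ at coordinate $i$, and are well-defined maps into $[q]$; in the degenerate case $a=b$ the two indices coincide and store a single map $T$ with $T(i)=a$, consistent with the convention; conversely if those row conditions hold then in particular $R^{[0][iab]}(i)=a$ forces $a\in D_i$ and likewise $b\in D_i$, so $S(i,a,b)$. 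For $\operatorname{Sig}_{SS}(S,\mathcal{X},\ddot{\mathcal{A}})$~(\ref{eqj3jkr}): if $S(i,a,b)$ then $T_a:=R^{[0][iab]}$ and $T_b:=R^{[0][iba]}$ are the needed homomorphisms witnessing $(i,a,b)$, by the previous paragraph; conversely, if there exist homomorphisms $T_a,T_b$ to $\ddot{\mathcal{A}}$ with $T_a(i)=a$, $T_b(i)=b$, then $a=T_a(i)\in D_i$ and $b=T_b(i)\in D_i$ by the domain-preservation clause of~(\ref{HOMINSTAG}), so $S(i,a,b)$ holds. Both implications are $\Sigma^{1,b}_0$, and $\exists S<b_s$ is witnessed by $\Sigma^{1,b}_0$-COMP applied to the defining formula of $S$; hence~(\ref{akajsgdfl7}) holds and $V^1$ proves $\operatorname{Sig}(R^{[0]})=\operatorname{Sig}(\mathscr{R}_{\Theta_0})$.

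The only mild obstacle is bookkeeping rather than mathematics: one must make sure the $a=b$ degenerate case of the indexing convention is handled consistently everywhere, and that the set $S$ one writes down, together with the rows of $R^{[0]}$, genuinely falls under $\Sigma^{1,b}_0$-COMP with the stated bounds $b_s=\langle n,q,q\rangle$ and $b_h=\langle n,q\rangle$ (this uses the polynomial bounds on the tupling function from~(\ref{alksjf}) and the length bounds in Definition~\ref{DIGRAPHSS}). I do not expect any induction beyond the comprehension already available in $V^1$; in particular no $\Sigma^{1,b}_1$-IND is needed here, since $R^{[0]}$ is given by an explicit bit-definition and the whole statement is $\Sigma^{1,b}_0$.
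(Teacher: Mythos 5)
You have proved a different statement from the one the lemma asserts. The lemma claims that $R^{[0]}$ \emph{generates} $\mathscr{R}_{\Theta_0}$, i.e.\ that every homomorphism $H\in\mathscr{R}_{\Theta_0}$ lies in the closure of $R^{[0]}$ under the Mal'tsev operation $\mu$, and the paper's proof establishes exactly that: writing $H(i)=a_i$ and $d_j=\min(D_j)$, it defines $Q^{[0]}:=R^{[00a_0a_0]}$ and $Q^{[t]}:=\mu(Q^{[t-1]},R^{[0td_td_t]},R^{[0ta_ta_t]})$ for $0<t<n$, observes (using the Mal'tsev identities and idempotence) that $Q^{[t]}$ sends $0,\ldots,t$ to $a_0,\ldots,a_t$ and every $j>t$ to $d_j$, and concludes $Q^{[n-1]}=H$; the existence of the set $Q$ is obtained by $\Sigma^{1,b}_1$-induction. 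Your proposal instead proves that $R^{[0]}$ is a compact representation of $\mathscr{R}_{\Theta_0}$ (i.e.\ $R^{[0]}\subseteq\mathscr{R}_{\Theta_0}$ and $\mathrm{Sig}(R^{[0]})=\mathrm{Sig}(\mathscr{R}_{\Theta_0})$), on the grounds that "generates" should be read that way because the soundness argument avoids generation. But that remark in the paper concerns the later sets $R^{[l]}$, for which generation would require exponentially large objects; it does not apply to this lemma, whose content really is the generation statement and which is provable for $l=0$ precisely because each element of $\mathscr{R}_{\Theta_0}$ needs only $n$ applications of $\mu$. Your closing claim that no $\Sigma^{1,b}_1$-induction is needed is the concrete symptom of the mismatch: the intended proof uses it to build the generating sequence $Q$.

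That said, nothing in your argument is mathematically wrong: the representation property you verify is true, your treatment of $\mathrm{Sig}_{CR}$, $\mathrm{Sig}_{SS}$ and the $a=b$ indexing convention is careful, and it is in fact the base-case property invoked later in the main soundness theorem (where the paper says it "follows directly from the construction"). So your work discharges a fact the paper needs, just not the fact this lemma states. To repair the proof, add the explicit generation argument sketched above.
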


 \begin{proof}
     Consider any map $H$ from $[n]$ to $[q]$. We aim to generate $H$ from elements of the set $R^{[0]}$. Let $d_j = \min(D_j)$. Note that for every $i < n$ and every $a_i < q$ such that $H(i) = a_i$, we have $R^{[0ia_ia_i]} \neq \emptyset$. Moreover, for all $i, j < n$ and $b < q$, it holds that $R^{[0id_id_i]}(j,b)$ if and only if $b = d_j$.

Now define the set $Q$ as follows, for every $0 < t < n$:
\begin{equation}
    \begin{gathered}
    \forall j < n\, \forall b < q \quad Q^{[0]}(j,b) \iff R^{[00a_0a_0]}(j,b), \\
    \forall j < n\, \forall b < q \quad Q^{[t]}(j,b) \iff \mu(Q^{[t-1]}, R^{[0td_td_t]}, R^{[0ta_ta_t]}).
    \end{gathered}
\end{equation}

That is, $Q^{[0]}$ is a map from $R^{[0]}$ that sends $i = 0$ to $a_0$. To obtain $Q^{[1]}$, we apply $\mu$ to $Q^{[0]}$, to a map that sends all $i <n$ to $d_i$, and to a map that sends $i = 1$ to $a_1$. As a result, we obtain a map that sends all coordinates $1 < j < n$ to $d_j$, sends $i = 0$ to $a_0$, and $i = 1$ to $a_1$. We then repeat this process for all $i < n$ to construct a map in $Q$ that coincides with $H$. 

The existence of such a set follows from $\Sigma^{1,b}_1$-induction, and we have $H = Q^{[n-1]}$.
 \end{proof}

We now introduce Mal'tsev algorithm, already adapted to our setting (for a detailed description of the original version, we refer the reader to~\cite{doi:10.1137/050628957} and, for example, to \cite{https://doi.org/10.48550/arxiv.2210.07383}). The algorithm uses the function $solve(\Theta)$, the first line of which we have formalized above.

\begin{algorithm}[H]
\caption{: $solve(\Theta)$}\label{alg:cap}
\begin{algorithmic}[1] 
\For{ all $i<n, \forall a,b\in D_i,$ $\forall d_j=min(D_j)$} $R^{[0]}( i, a,b , i,a)\wedge \forall j\neq i<n \,R^{[0]}( i, a,b, j,d_j)$
\EndFor{}
\For{ all $1\leq l\leq m$} 
   \State $R^{[l]}:=next(R^{[l-1]}, min(E_{\mathcal{X}_l}), E^{[min(E_{\mathcal{X}_l})]}_{\ddot{\mathcal{A}}})$\Comment{\color{gray}add the minimum edge $(i,j)$ of  instance $E_{\mathcal{X}_l}$\bl}
\EndFor{}
\If{$R^{[m]}\neq \emptyset$}
   \Return{\text{yes}}
\Else{}
\Return{\text{no}}
\EndIf{}
\end{algorithmic}
\end{algorithm}

The only subroutine of $solve(\Theta)$ is the function $next(R^{[l-1]}, \min(E_{\mathcal{X}_l}), E^{[\min(E_{\mathcal{X}_l})]})$, which takes as input a compact representation $R^{[l-1]}$ of the solution set $\mathscr{R}_{\Theta_{l-1}}$, along with the minimal edge of the instance $\Theta_l$, and outputs a compact representation $R^{[l]}$ of the solution set $\mathscr{R}_{\Theta_l}$. To describe this function, we first introduce several of its subroutines. We provide a formalization for each of these immediately, to facilitate the reader's understanding.

\subsubsection{Formalization of the function $nonempty^1$}
The function $nonempty$ appears as a subroutine in the algorithm three times. Its general behavior is as follows: it takes as input a compact representation $R$ of some relation $\mathbb{R}$ that is invariant under a Mal'tsev term $\mu$, a sequence of indices $\{i_1, \ldots, i_j\} \subseteq \{0, \ldots, n-1\}$, and a $j$-ary relation $\S$, also invariant under $\mu$. The output is either a tuple $t \in \mathbb{R}$ such that $\pi_{i_1, \ldots, i_j}t \in \S$, or the value \emph{no}, indicating that no such tuple exists. In our case, $j$ is either $2$ or $3$.

We now describe and formalize the first occurrence of the function $nonempty$ (denoted $nonempty^1$). Here, the input consists of a compact representation $R^{[l-1]}$ of $\mathscr{R}_{\Theta_{l-1}}$ and a small relation $E^{[ij]}_{\ddot{\mathcal{A}}} \times \{a\}$, for some arbitrary $a \in A$. The output is either a tuple $T \in \mathscr{R}_{\Theta_{l-1}}$ such that the projection of $T$ onto $\{i,j,k\}$ belongs to $E^{[ij]}_{\ddot{\mathcal{A}}} \times \{a\}$, or the empty set if no such $T$ exists.

\begin{algorithm}[H]
\caption{: $nonempty^1(R^{[l-1]},  i,j,k, E^{[ij]}_{\ddot{\mathcal{A}}}\times\{a\})$}\label{alg:cap}
\begin{algorithmic}[1] 
\State $V:=R^{[l-1]}$
\While{$\exists T_1,T_2,T_3\in V$\text{ such that }$\pi_{i,j,k}(\mu(T_1,T_2,T_3))\notin \pi_{i,j,k}V$}
   \State $V:=V\cup \{\mu(T_1,T_2,T_2)\}$
\EndWhile{}\Comment{\color{gray}bounded by $q^3$\bl}
\If{$\exists T\in V$  such that $\pi_{i,j,k}(T)\in  E^{[ij]}_{\ddot{\mathcal{A}}}\times\{a\}$}
   \Return{T}
\Else{} 
    \Return{$\emptyset$}
\EndIf{}
\end{algorithmic}
\end{algorithm}
We need to show that the set $V$ can be generated using $\Sigma^{1,b}_1$-induction. To this end, for every $0 \leq t \leq q^3$, we define the set $V^{[t]}$ as follows. We fix the coordinates $i, j, k$ and the element $a$. For the initial layer $V^{[0]}$, we simply copy the set $R^{[l-1]}$, where $g$ is the index of a homomorphism in $R^{[l-1]}$ corresponding to some tuple $(i, a, b)$ that is witnessed by the homomorphism. Since each such $g$ corresponds to a tuple, it is bounded by $b_s$.
\begin{equation}
    \begin{gathered}
\forall g<b_s \forall r<n\forall c<q \,\, V^{[0]}( g,r,c)\iff R^{[l-1]}(g,r,c).       
    \end{gathered}
\end{equation}
For layers $1 \leq t \leq q^3$, we transfer all homomorphisms from the previous layer and add any new homomorphisms that can be generated using the function $\mu$ from those in layer $(t-1)$, provided they differ in at least one of the coordinates $i, j, k$ from those on layer $(t-1)$. To properly store a homomorphism generated from any three homomorphisms on the previous layer, we need to keep track of its identifier (as well as the identifiers of the homomorphisms used to produce it). If we apply $\mu$ to homomorphisms with identifiers $g_1, g_2, g_3$ and choose to store the resulting homomorphism, its identifier is set to $g = \langle g_1, g_2, g_3 \rangle$ (so that this identifier indeed refers to a valid homomorphism, not arbitrary values).

Therefore, we must compute an upper bound on such identifiers. To achieve this, in parallel with constructing the set $V$, we define a set $B$ that stores the upper bounds for each layer, based on the previous one. The function $seq(t, B)$ returns the minimal value stored in $B^{[t]}$.
\begin{equation}
    \begin{gathered}
        B(0,b_s)\wedge\forall b\neq b_s\,\neg B(0,b)\\
        \forall 0\leq t\leq q^3\,\,
        B(t+1,u) \iff u =\langle seq(t,B),seq(t,B),seq(t,B)\rangle.
    \end{gathered}
\end{equation}
Since $\langle b_s, b_s, b_s \rangle < (3b_s + 1)^8$, it follows that $seq(q^3 + 1, B) \leq c \cdot b_s^{8^{(q^3 + 1)}}$ for some appropriate constant $c$. Therefore, the final upper bound remains polynomial in the number of variables.
\begin{equation}\label{aklsdjfglg}
    \begin{gathered}
\forall g<seq(t+1,B) \forall r<n\forall b<q \,\, V^{[t+1]} ( g, r,b) \iff V^{[t]} (g,r,b) \vee\\ \vee\big[\exists g_1,g_2,g_3<seq(t,B) \forall g'<seq(t,B)\wedge g=\langle g_1,g_2,g_3\rangle\wedge \\
\mu(V^{[tg_1]},V^{[tg_2]},V^{[tg_3]})( r,b)\wedge \\
    \big(\mu(V^{[tg_1]},V^{[tg_2]},V^{[tg_3]})(i)\neq V^{[tg']}(i)\vee\mu(V^{[tg_1]},V^{[tg_2]},V^{[t,g_3]})(j)\neq V^{[tg']}(j)\vee\\
\vee\mu(V^{[tg_1]},V^{[tg_2]},V^{[tg_3]})(k)\neq V^{[tg']}(k)\big)\big].
    \end{gathered}
\end{equation}
Finally, in the last layer $t = (q^3 + 1)$, we store only those homomorphisms from $V^{[q^3]}$ whose projection onto the coordinates $i, j, k$ belongs to $E^{[ij]}_{\ddot{\mathcal{A}}} \times \{a\}$.
\begin{equation}
    \begin{gathered}
\forall g<seq(q^3+1,B) \forall r<n\forall b<q \,\, V^{[q^3+1]} ( g\bl, r,b) \iff V^{[q^3]} (g,r,b)\wedge \\
\wedge (\forall a_i,a_j<q\,\,V^{[q^3g]}(i)=a_i\wedge V^{[q^3g]}(j) =a_j\rightarrow E^{[ij]}_{\ddot{\mathcal{A}}}(a_i,a_j))\wedge V^{[q^3g]}(k)=a.
    \end{gathered}
\end{equation}
The existence of both sets $V$ and $B$ follows from $\Sigma^{1,b}_1$-induction. We now define the function $nonempty^1$ via its bit-definition, for all $r < n$ and $b < q$:
   \begin{equation*}
       \begin{gathered}
           nonempty^1(R^{[l-1]}, i,j,k, E^{[ij]}_{\ddot{\mathcal{A}}}\times\{a\})(r,b) = T(r,b)\iff \exists g<seq(q^3+1,B)\\
       V^{[q^3+1][g]}(r)=b \wedge \forall g'<seq(q^3+1,B)\,\, (g'<g \rightarrow V^{[q^3+1][g']} = \emptyset).
       \end{gathered}
   \end{equation*}
That is, we always return a homomorphism from $V^{[q^3+1]}$ with the minimal identifier $g$ that satisfies the required projection onto the coordinates $i, j, k$. Hence, $nonempty^1$ is a well-defined function that returns a unique value determined by its arguments. Note also that if no such $g$ exists, then $nonempty^1$ returns the empty set.
\begin{lem}[Correctness of $nonempty^1$]\label{djksfasgfljfsf}
Assume that $R^{[l-1]} \subseteq \mathscr{R}_{\Theta_{l-1}}$ and $\mathrm{Sig}(R^{[l-1]}) = \mathrm{Sig}(\mathscr{R}_{\Theta_{l-1}})$. Then 
$V^1$ proves that $nonempty^1(R^{[l-1]}, i, j, k, E^{[ij]}_{\ddot{\mathcal{A}}} \times \{a\})$ outputs a homomorphism $T$ from $\mathcal{X}_{l-1}$ to $\ddot{\mathcal{A}}$ such that $\pi_{i,j,k}T \subseteq E^{[ij]}_{\ddot{\mathcal{A}}} \times \{a\}$, or, if 
\[
nonempty^1(R^{[l-1]}, i, j, k, E^{[ij]}_{\ddot{\mathcal{A}}} \times \{a\}) = \emptyset,
\]
then there is no homomorphism $T \in \mathscr{R}_{\Theta_{l-1}}$ such that $\pi_{i,j,k}T \subseteq E^{[ij]}_{\ddot{\mathcal{A}}} \times \{a\}$.
\end{lem}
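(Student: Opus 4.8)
The plan is to unfold the bit-definition of $nonempty^1$ and verify the two promised behaviours against the layered sets $V^{[0]},\ldots,V^{[q^3+1]}$ constructed above, using $\Sigma^{1,b}_1$-induction on the layer index $t$ to transport the relevant invariants. The first invariant to maintain is that every homomorphism stored in $V^{[t]}$ is a genuine homomorphism from $\mathcal{X}_{l-1}$ to $\ddot{\mathcal{A}}$: this holds for $V^{[0]}$ because $V^{[0]}=R^{[l-1]}\subseteq\mathscr{R}_{\Theta_{l-1}}$ by hypothesis, and the induction step from $t$ to $t+1$ follows from Lemma~\ref{simplemm}, since every new element of $V^{[t+1]}$ has the form $\mu(V^{[tg_1]},V^{[tg_2]},V^{[tg_3]})$ for indices $g_1,g_2,g_3$ that, by the induction hypothesis, name homomorphisms. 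The last-layer filter only removes elements, so every $T\in V^{[q^3+1]}$ is still a homomorphism, and moreover by construction satisfies $\pi_{i,j,k}T\in E^{[ij]}_{\ddot{\mathcal{A}}}\times\{a\}$; hence whenever $nonempty^1$ returns a nonempty $T$, that $T$ is as claimed. This handles the positive case.

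For the negative case I must show that if $nonempty^1=\emptyset$ then no homomorphism $T\in\mathscr{R}_{\Theta_{l-1}}$ has $\pi_{i,j,k}T\in E^{[ij]}_{\ddot{\mathcal{A}}}\times\{a\}$. The key is a saturation claim: $V^1$ proves that $\pi_{i,j,k}(V^{[q^3]})=\pi_{i,j,k}(\mathscr{R}_{\Theta_{l-1}})$. The inclusion $\subseteq$ is immediate from the first invariant plus the hypothesis $R^{[l-1]}\subseteq\mathscr{R}_{\Theta_{l-1}}$. For $\supseteq$, the argument mirrors the proof of Theorem~\ref{alalksdjasd}, but projected onto the three fixed coordinates $i,j,k$: starting from $R^{[l-1]}$, whose signature equals that of $\mathscr{R}_{\Theta_{l-1}}$, one builds up any desired triple of values on coordinates $i,j,k$ by at most three successive applications of $\mu$ against witnessing pairs drawn from $R^{[l-1]}$ (one per coordinate), which is why $q^3$ layers — actually far fewer — suffice to exhaust all at most $q^3$ reachable triples on those coordinates. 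One then argues that the ``add a new $(i,j,k)$-triple'' loop must terminate: each layer that is not already saturated strictly enlarges $\pi_{i,j,k}(V^{[t]})$, a subset of $[q]^3$, so after at most $q^3$ steps no new triple appears, i.e. $\pi_{i,j,k}(V^{[t+1]})=\pi_{i,j,k}(V^{[t]})$, and this stable set is closed under $\mu$ on those coordinates, hence — using $\mathrm{Sig}(R^{[l-1]})=\mathrm{Sig}(\mathscr{R}_{\Theta_{l-1}})$ and the coordinate-wise reconstruction above — equals $\pi_{i,j,k}(\mathscr{R}_{\Theta_{l-1}})$. Given this, if some homomorphism of $\mathscr{R}_{\Theta_{l-1}}$ had its $(i,j,k)$-projection in $E^{[ij]}_{\ddot{\mathcal{A}}}\times\{a\}$, then some $T\in V^{[q^3]}$ would have the same projection, so $V^{[q^3+1]}$ would be nonempty and $nonempty^1$ would not return $\emptyset$, a contradiction.

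The main obstacle I expect is the saturation claim and its termination argument carried out \emph{inside $V^1$}: one must phrase ``$\pi_{i,j,k}(V^{[t]})$ strictly grows until it stabilises'' as a $\Sigma^{1,b}_1$ (or $\Pi^{1,b}_1$) induction with an explicit polynomial bound, since $V^1$ only has induction for such formulas and cannot reason about the exponentially large $\mathscr{R}_{\Theta_{l-1}}$ directly. The point that makes this feasible is that $\pi_{i,j,k}(V^{[t]})\subseteq[q]^3$ is a \emph{small} object of size at most $q^3$, so its census is available (via the counting axiom of Section~\ref{asdkjfg;lkg}) and can serve as a measure; and the reconstruction-of-a-triple step needs only three $\mu$-applications, which is a bounded computation expressible by a $\Sigma^{1,b}_0$ formula. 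A secondary technical nuisance is bookkeeping the identifiers $g=\langle g_1,g_2,g_3\rangle$ and verifying, by $\Sigma^{1,b}_1$-induction mirroring the definition of $B$, that they stay below the polynomial bound $seq(q^3+1,B)$ so that all the bounded quantifiers in the bit-definition of $nonempty^1$ are legitimate; this is routine but must be stated. Once saturation, termination, and the identifier bound are in hand, both halves of the lemma follow by chasing the bit-definitions, and the whole argument is visibly $\Sigma^{1,b}_1$, hence formalizable in $V^1$.
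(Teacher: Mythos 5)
Your overall strategy coincides with the paper's: the positive case is handled exactly as in the paper (layer-wise induction, via Lemma~\ref{simplemm}, that every element of $V^{[t]}$ is a homomorphism, followed by the final filtering layer), and for the negative case the paper likewise invokes the signature equality to re-run the construction of Theorem~\ref{alalksdjasd} restricted to the three coordinates inside $V$. Your explicit saturation-and-census argument --- that $\pi_{i,j,k}(V^{[t]})$ is a subset of $[q]^3$ which strictly grows until it stabilizes within $q^3$ layers, and that the stabilized set is closed under $\mu$ --- is a useful piece of bookkeeping that the paper leaves implicit in the bound on the while-loop.

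However, one step of your reconstruction would fail as stated: the claim that any triple $\pi_{i,j,k}(T)$ for $T\in\mathscr{R}_{\Theta_{l-1}}$ is reachable from $R^{[l-1]}$ by at most three applications of $\mu$, ``one per coordinate''. Say $i<j<k$ and you have produced $t'$ with $t'(i)=T(i)$. To correct coordinate $j$ in the style of Theorem~\ref{alalksdjasd} you need the tuple $(j,t'(j),T(j))$ to lie in $\operatorname{Sig}(R^{[l-1]})=\operatorname{Sig}(\mathscr{R}_{\Theta_{l-1}})$, and the natural witness is the pair $(t',T)$ --- which witnesses that tuple only if $t'$ and $T$ agree on \emph{all} coordinates $<j$, not merely on the single coordinate $i$ fixed so far. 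So that tuple need not be in the signature at all, and the per-coordinate shortcut breaks; nor does $\operatorname{Sig}(R^{[l-1]})=\operatorname{Sig}(\mathscr{R}_{\Theta_{l-1}})$ imply that $\pi_{i,j,k}(R^{[l-1]})$ has the same signature as $\pi_{i,j,k}(\mathscr{R}_{\Theta_{l-1}})$, since projecting can create new witnessing pairs. (The paper's own ``in just two steps'' is equally terse on this point.) The repair uses only ingredients you already have: run the full prefix construction of Theorem~\ref{alalksdjasd} up to coordinate $\max(i,j,k)$, obtaining maps $T_0,T_1,\dots$ with $T_p$ agreeing with $T$ on coordinates $0,\dots,p-1$, each produced from its predecessor and a witnessing pair in $R^{[l-1]}$ by one application of $\mu$ (a $\Sigma^{1,b}_1$-definable sequence of polynomial-size maps); then show by induction on $p$ that $\pi_{i,j,k}(T_p)\in\pi_{i,j,k}(V^{[q^3]})$, using exactly the closure of the stabilized projection set under $\mu$ that you established. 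Once closure is in hand, the number of $\mu$-applications in the reconstruction is irrelevant, so the $q^3$ bound on the layers is justified by your census argument alone and not by the ``three applications suffice'' claim.
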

\begin{proof}
    Since $V^{[0]} = R^{[l-1]}$ and $R^{[l-1]} \subseteq \mathscr{R}_{\Theta_{l-1}}$, it follows that for every $g < b_s$, the map $V^{[0g]}$ is a homomorphism from $\mathcal{X}$ to $\ddot{\mathcal{A}}$. Moreover, for every $0 < t \leq q^3$, we add to $V^{[t]}$ only those maps that are generated from homomorphisms using the function $\mu$. By Lemma~\ref{simplemm}, it follows that for every $g < \mathrm{seq}(q^3, B)$, the map $V^{[q^3g]}$ is also a homomorphism, and we have $V^{[(q^3+1)g]} \subseteq V^{[q^3g]}$.

    Now assume that there exists a homomorphism $T \in \mathscr{R}_{\Theta_{l-1}}$ such that $T(i) = b$, $T(j) = c$, and $E^{[ij]}_{\ddot{\mathcal{A}}}(b, c)$, and further $T(k) = a$. Since $\mathrm{Sig}(R^{[l-1]}) = \mathrm{Sig}(\mathscr{R}_{\Theta_{l-1}})$, it follows that there exists a set $S$ such that $\mathrm{Sig}_{CR}(S, R) \wedge \mathrm{Sig}_{SS}(S, \Theta)$. This implies that $S(i, b, b)$ holds, and therefore $R^{[(l-1)ibb]}(i) = b$. Consequently, $V^{[0ibb]}$ contains a homomorphism with $V^{[0ibb]}(i) = b$.

We can now formalize the same construction as in Theorem~\ref{alalksdjasd}. Since we are only concerned with the projection onto three coordinates, the required homomorphism can be constructed and stored in $V$ in just two steps.

\end{proof}

\subsubsection{Formalization of the function $fixvalues$}
The next function we consider is called $fixvalues$. This function takes as input a compact representation $R$ of a relation $\mathbb{R}$ that is invariant under a Mal'tsev term $\mu$, along with a sequence of values $a_0, \ldots, a_{k-1}$ from the domain $A$, for some $k \leq n - 1$. It returns a compact representation $U$ of the relation defined by
$$
\mathbb{U} = \{t \in \mathbb{R} : \pi_0 t = a_0, \ldots, \pi_{k-1} t = a_{k-1}\}.
$$
In the main algorithm, we use this function for any tuple $(k, a, b) \in [n] \times D_k^2$ after the execution of the function $nonempty^1$, in the case where it outputs some element $T_a$, in order to find a corresponding element $T_b$ such that together they witness the tuple.

For convenience, we start the indexing of the sequence of sets $U_j$ from $U_{-1}$ (since we add the constraint $a_j$ at level $U_j$). For every $j < k$, each $U_j$ is a compact representation of the relation
$$
\mathbb{U} = \{t \in \mathbb{R} : \pi_0 t = a_0, \ldots, \pi_j t = a_j\}.
$$
\begin{algorithm}[H]\label{asjd643}
\caption{$fixvalues(R^{[l-1]}, a_0, \ldots, a_{k-1})$}\label{aaasp}
\begin{algorithmic}[1] 
\State $U_{-1} := R^{[l-1]}$ \Comment{\color{gray}set $U_{-1} = R^{[l-1]}$; for every $0 \leq j \leq k-1$, gradually add the constraint $x_j = a_j$ to the previous set $U_{j-1}$ to obtain a new set $U_j$, which is a compact representation of the relation $\{T \in \mathscr{R}_{\Theta_{l-1}} : T(0) = a_0, \ldots, T(j) = a_j\}$\bl}

\For{$j$ from $0$ to $k-1$}
   \If{$(j, a_j, a_j) \notin \mathrm{Sig}(U_{j-1})$}
   \State $U_j := \emptyset$ \Comment{ \color{gray}if $\mathrm{Sig}(U_{j-1})$ does not contain such a tuple, then there is no solution $T$ to the instance with $T(0) = a_0, \ldots, T(j) = a_j$, and we eventually return an empty set $U_{k-1}$\bl}
   \Else
   \State $U_j := \{T\}$ where $T \in U_{j-1}$ and $T(j) = a_j$ \Comment{\color{gray}in each $U_{j-1}$, if $U_{j-1} \neq \emptyset$, all elements satisfy $T(0) = a_0, \ldots, T(j-1) = a_{j-1}$\bl}
     \For{all $(i, a, b) \in \mathrm{Sig}(U_{j-1})$ with $i > j$}
        \State Let $T_a, T_b \in U_{j-1}$ witness the tuple $(i, a, b)$; if $a = b$, choose $T_a = T_b$.
        \State Let $T := nonempty^3(U_{j-1}, j, i, \{(a_j, a)\})$ \Comment{\color{gray} find an element $T$ in the relation $\{T \in \mathscr{R}_{\Theta_{l-1}} : T(0) = a_0, \ldots, T(j-1) = a_{j-1}\}$ such that $T(j) = a_j$ and $T(i) = a$\bl}
        \If{$T \neq \emptyset$} 
           \State $U_j := U_j \cup \{T, \mu(T, T_a, T_b)\}$ \Comment{\color{gray} if there exists an element $T$ in the relation $\{T \in \mathscr{R}_{\Theta_{l-1}} : T(0) = a_0, \ldots, T(j-1) = a_{j-1}\}$ such that $T(j) = a_j$ and $T(i) = a$, then add $T$ to $U_j$, along with the value obtained by applying $\mu$ to $T, T_a, T_b$; this ensures that the pair $(T, \mu(T, T_a, T_b))$ witnesses the tuple $(i, a, b)$\bl}
        \EndIf
     \EndFor
   \EndIf
\EndFor
\\
\Return{$U_{k-1}$}
\end{algorithmic}
\end{algorithm}
Note that in each iteration of lines $7-13$, we add at most two elements to $U_j$. Therefore, $U_j$ remains a compact representation. 

We now need to show that the set $U_{k-1}$ can be formalized for any $R^{[l-1]}$, any $k \leq n - 1$, and any sequence $a_0, \ldots, a_{k-1}$ using $\Sigma^{1,b}_1$-induction. Note that since this set is later used in the procedure $nonempty^2$ (which is structurally similar to $nonempty^1$), it must have the same form as $R^{[l-1]}$ itself - that is, for every pair of tuples in the signature $(i, a, b)$ and $(i, b, a)$, it must contain corresponding unique maps $T_a$ and $T_b$ under indices $\langle i, a, b\rangle$ and $\langle i, b, a\rangle$ that witness them.

Thus, the function $fixvalues$ must return the set $U^{[k-1]}$. We cannot define $fixvalues$ exactly as described in the algorithm because the number of arguments to the function is variable and can be as large as $n$. However, since we use it in the function $next$ for any tuple $(k, a, b)$ to find a pair for some map $H_a$ that witnesses the tuple, we can simply pass the following set of arguments to $fixvalues$:
\begin{equation}
    \begin{gathered}
        fixvalues(R^{[l-1]}, H_a, k-1) = U^{[k-1]}.
    \end{gathered}
\end{equation}
In this way, we can access the values $a_0, \ldots, a_{k-1}$ through a single set as $H_a(0), \ldots, H_a(k-1)$. In the subsequent formalization, for simplicity of presentation, we will use the first option.

Let us first describe the function $nonempty^3(U^{[j-1]}, j, i, \{(a_j, a)\})$, which we use as a subroutine at every level $j < k$ for the set $U$. The function returns either a map $T$ from $\mathbb{U}^{[j-1]}$ such that $\pi_{j,i}T = (a_j, a)$, or the empty set. In fact, its definition differs only slightly from that of the function $nonempty^1$. Instead of the ternary relation $E^{[ij]}_{\ddot{\mathcal{A}}} \times \{a\}$ on coordinates $i, j, k$, here we work with a binary relation $(a_j, a)$ on coordinates $j, i$. The construction of the set $V$ will be bounded by level $(q^2 + 1)$.

\begin{equation*}
\begin{gathered}
nonempty^3(U^{[j-1]}, j, i, \{(a_j, a)\})(r, b) = T(r, b) \iff \exists g < seq(q^2 + 1, B) \\
V^{[(q^2 + 1)g]}(r) = b \wedge \forall g' < seq(q^2 + 1, B)\,\, (g' < g \rightarrow V^{[(q^2 + 1)g']} = \emptyset).
\end{gathered}
\end{equation*}
Thus, to construct the set $U^{[k-1]}$, for every level $j < k$, for every tuple $(i,a,b)\in Sig(U_{j-1})$ we simultaneously construct a set $V^{[(q^2 + 1)]}$ and a set of upper bounds $B^{[(q^2 + 1)]}$. 
For every layer $j < k$, we run $nonempty^3$ on at most $n \times q^2$ tuples. Thus, for a single run of $fixvalues$, we can store both $V^{[(q^2 + 1)]}$ and $B^{[(q^2 + 1)]}$ in two supersets, namely $V^{[k-1][nqq][q^2 + 1]}$ and $B^{[k-1][nqq][q^2 + 1]}$. Both sets are bounded by a polynomial in $n$, and since their definitions use only $\Sigma^{1,b}_0$-formulas, their existence follows from $\Sigma^{1,b}_1$-induction. Therefore, we will not explicitly repeat the construction for $nonempty^3$.

Let us now move to the definition of the set $U$. For the initial layer $U^{[-1]}$, we copy the set $R^{[l-1]}$. (While we cannot formally use $-1$ as an index in the formalization, this can be handled by a simple shift by $1$.)
\begin{equation}
    \begin{gathered}
\forall i<n\forall a, b<q \bl\forall r<n\forall c<q \,\, U^{[-1]}( i,a,b\bl,r,c)\iff R^{[l-1]}( i,a,b\bl,r,c).       \end{gathered}
\end{equation} 
For each $U^{[j]}$, under the index $\langle j, a_j, a_j\rangle $ we copy the map $T$ from $U^{[j-1]}$ using the same identifier. The reason is that on the previous level $(j-1)$, when we added the constraint $x_{j-1} = a_{j-1}$, if there existed a map $T$ in $\mathbb{U}^{[j-2]}$ with $T(j-1) = a_{j-1}$ and $T(j) = a_j$, then the function $nonempty^3(U^{[j-2]}, j-1, j, \{(a_{j-1}, a_j)\})$ already returned it, and we stored that map under the index $\langle j, a_j, a_j\rangle$.
\begin{equation}
    \begin{gathered}
     \bl\forall r<n\forall c<q \,\, U^{[j]}(j,a_{j},a_{j}, r, c) \iff U^{[j-1]}(j,a_{j},a_{j}, r, c).
    \end{gathered}
\end{equation}
Note also that by construction, any map $T$ in $U^{[j-1]}$ satisfies $T(0) = a_0, \ldots, T(j-1) = a_{j-1}$. Since we require $U^{[j]}$ to be a compact representation, for every $i < j$ and tuple $(i, a_i, a_i)$, it suffices to store a single map in $U^{[j]}$. We can choose this map to be the same one stored under the index $\langle j, a_j, a_j\rangle$.
\begin{equation}
    \begin{gathered}
        \forall i<j\forall r<n\forall c<q\,\,U^{[j]}(i, a_i, a_i, r,c ) \iff U^{[j-1]}(j, a_j,a_j, r,c ).
    \end{gathered}
\end{equation}
Furthermore, for every $i \leq j$ and every tuple $(i, a, b)$ such that $a, b \neq a_i$, the set $U^{[j]}$ cannot contain any additional maps, as such tuples cannot appear in the signature.
\begin{equation}
    \begin{gathered}
        \forall i\leq j \forall a,b< q \forall r<n\forall c<q\,\,\,\,a,b\neq a_i \rightarrow \neg U^{[j]}(i, a, b, r,c ).
    \end{gathered}
\end{equation}
Finally, we need to store in $U^{[j]}$ all maps that witness the indices $(i, a, b)$ and $(i, b, a)$ for $i > j$, such that the maps send $j$ to $a_j$. To achieve this, we consider the maps $T_a$ and $T_b$ from $U^{[j-1]}$, stored under the same indices, and check whether we can generate new maps $T_a'$ and $T_b'$ from them.

First, we apply the function $nonempty^3(U^{[j-1]}, j, i, \{(a_j, a)\})$ to obtain a map $T$ such that $T(j) = a_j$ and $T(i) = a$. Note that since $U^{[j-1]}$ is a representation of the set $\mathbb{U}^{[j-1]}$, and since $\mathbb{U}^{[j]} \subseteq \mathbb{U}^{[j-1]}$, if such a map exists, the function $nonempty^3$ will find it.

We then apply $\mu(T, T_a, T_b)$, which yields a map satisfying $\mu(T, T_a, T_b)(j) = a_j$ and $\mu(T, T_a, T_b)(i) = b$. Since we intend to store both $T$ and $\mu(T, T_a, T_b)$ in $U^{[j]}$, and since they are generated via different constructions, we require a case distinction. We begin with the case $a \leq b$:
\begin{equation}
    \begin{gathered}
    \forall j<i<n\forall a\leq b< q \forall r<n\forall c<q \,\, U^{[j]}(i,a,b, r, c) \iff \forall r_1,r_2<n\exists c_1,c_2 <q\\
    U^{[(j-1)iab]}(r_1) = c_1\wedge U^{[(j-1)iba]}(r_2) = c_2 \wedge \\
 nonempty^3(U^{[j-1]}, j,i,\{(a_{j},a)\})(r,c).\\
    \end{gathered}
\end{equation}
The second case for $b>a$:
\begin{equation}
    \begin{gathered}
    \forall j<i<n\forall a< b< q \forall r<n\forall c<q \,\, U^{[j]}(i,b,a, r, c) \iff \forall r_1,r_2<n\exists c_1,c_2 <q\\
    U^{[(j-1)iab]}(r_1) = c_1\wedge U^{[(j-1)iba]}(r_2) = c_2 \wedge \\
\mu(nonempty^3(U^{[j-1]}, j,i,\{(a_{j},a)\}), U^{[(j-1)iab]},  U^{[(j-1)iba]})(r,c).
    \end{gathered}
\end{equation}
The second lines in both definitions indicate that there are indeed two maps, $U^{[(j-1)iab]}$ and $U^{[(j-1)iba]}$, from the previous layer, stored under the unique indices $\langle i, a, b\rangle$ and $\langle i, b, a\rangle$, which witness these two tuples. Thus, we store $T$ under the index $\langle i, a, b \rangle$ for $a \leq b$, and $\mu(T, T_a, T_b)$ under the index $\langle i, b, a \rangle$ for $b > a$.

This approach works due to the uniqueness of indices, the fact that $nonempty^3$ is a well-defined function, and the validity of Remarks~\ref{a=++++},~\ref{&&^*&}. Note that if $nonempty^3(U^{[j-1]}, j, i, \{(a_j, a)\}) = T$ returns the empty set, then $\mu$ will also return the empty set, as $T$ will not be a well-defined map. In that case, we do not store any map at level $U^{[j]}$ under the indices $\langle i, a, b\rangle$ and $\langle i, b, a\rangle$.

This concludes the construction of the set $U^{[k-1]}$. In the definition, we used only $\Sigma^{1,b}_0$-formulas; therefore, the existence of such a set follows from $\Sigma^{1,b}_1$-induction. Moreover, the set is unique, since $nonempty^3$ is a well-defined function. Consequently, $fixvalues$, which returns $U^{[k-1]}$, is also a well-defined set function.

Now let us prove the correctness of the function $fixvalues$. We need to show that if $fixvalues$ takes as input a compact representation $R^{[l-1]}$ of $\mathscr{R}_{\Theta_{l-1}}$, a map $H_a$, and a number $(k-1)$, then it outputs a compact representation of $\mathscr{R}_{\Theta'_{l-1}}$, where $\Theta'_{l-1}$ is the same as the instance $\Theta_{l-1}$ except that for all $i < k$, the domains are restricted to $D_i' = \{H_a(i)\}$. We denote $U^{[-1]} = R^{[l-1]}$ as the compact representation of the instance $\mathscr{U}_{\Theta'_{-1}} = \mathscr{R}_{\Theta_{l-1}}$. For each $0 \leq j \leq k - 1$, we define a CSP instance $\Theta'_j$ such that $\mathcal{X}'_j = \mathcal{X}_{l-1}$ and $\ddot{\mathcal{A}}'_j$ satisfies the following domain conditions:
\[
D_0' = \{a_0\}, \ldots, D_j' = \{a_j\}, \quad D_{j+1}' = D_{j+1}, \ldots, D_{n-1}' = D_{n-1},
\]
where $H_a(0) = a_0, \ldots, H_a(j) = a_j$. Additionally, for all $s, t < n$, we require that
\begin{equation}
    \begin{gathered}
        E_{\ddot{\mathcal{A}}'_j}^{[st]}(a, b) \rightarrow E_{\ddot{\mathcal{A}}}^{[st]}(a, b) \wedge D'_s(a) \wedge D'_t(b).
    \end{gathered}
\end{equation}
The following lemma is proved in a way similar to Lemma~\ref{djksfasgfljfsf}.

\begin{lem}[Correctness of $nonempty^3$]\label{=a=-sify34gtth}
Assume that $U^{[j-1]} \subseteq \mathscr{U}_{\Theta'_{j-1}}$ and $\mathrm{Sig}(U^{[j-1]}) = \mathrm{Sig}(\mathscr{U}_{\Theta'_{j-1}})$. Then 
$V^1$ proves that the function $nonempty^3(U^{[j-1]}, j, i, \{(a_j, a)\})$ for any $i > j$, any $a_j \in D_j$, and any $a \in D_i$ either outputs a homomorphism $T$ from $\mathcal{X}'_{j-1}$ to $\ddot{\mathcal{A}}'$ such that $\pi_{j,i}T = (a_j, a)$, or, if
\[
nonempty^3(U^{[j-1]}, j, i, \{(a_j, a)\}) = \emptyset,
\]
then there is no homomorphism $T \in \mathscr{U}_{\Theta'_{j-1}}$ such that $\pi_{j,i}T = (a_j, a)$.
\end{lem}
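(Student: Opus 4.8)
The plan is to follow the proof of Lemma~\ref{djksfasgfljfsf} (correctness of $nonempty^1$) almost line for line, changing only the two places where $nonempty^3$ differs from $nonempty^1$: it tests membership of $\pi_{j,i}T$ in the binary relation $\{(a_j,a)\}$ on coordinates $j,i$ instead of membership of $\pi_{i,j,k}T$ in a ternary relation, and its internal closure sequence $V^{[0]},\ldots,V^{[q^2+1]}$, together with the bound array $B$, stops at layer $q^2+1$ rather than $q^3+1$. Since $q$ (the size of the algebra) is a constant, $seq(q^2+1,B)\le c\cdot b_s^{8^{(q^2+1)}}$ is still polynomial in $n$; hence $V$ is polynomially bounded and the existence of both $V$ and $B$ follows from $\Sigma^{1,b}_1$-induction, exactly as in the case of $nonempty^1$. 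One preliminary observation, used throughout: restricting the domains $D'_0,\ldots,D'_{j-1}$ to singletons does not destroy the polymorphism property of $M$, so $\Theta'_{j-1}$ is again an instance corresponding to the Mal'tsev algebra $([q],M)$, and Lemma~\ref{simplemm} applies to it, so that the $\mu$-image of homomorphisms from $\mathcal{X}'_{j-1}$ to $\ddot{\mathcal{A}}'$ is again such a homomorphism.

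I would first dispatch the \emph{soundness} direction. By hypothesis $V^{[0]}=U^{[j-1]}\subseteq\mathscr{U}_{\Theta'_{j-1}}$, so every row of $V^{[0]}$ that encodes a map is a homomorphism from $\mathcal{X}'_{j-1}$ to $\ddot{\mathcal{A}}'$. For $1\le t\le q^2$, every new row of layer $t$ has the form $\mu(V^{[(t-1)g_1]},V^{[(t-1)g_2]},V^{[(t-1)g_3]})$, so by Lemma~\ref{simplemm} and $\Sigma^{1,b}_1$-induction on $t$ every map-row of $V^{[q^2]}$ is still a homomorphism. The last layer $V^{[q^2+1]}$ retains only rows whose $j$-th and $i$-th coordinates equal $a_j$ and $a$, and $V^{[(q^2+1)g]}\subseteq V^{[q^2g]}$; hence the map picked out by the minimal-identifier clause in the bit-definition of $nonempty^3$, if there is one, is a homomorphism with $\pi_{j,i}T=(a_j,a)$, and otherwise the bit-definition yields $\emptyset$.

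For the \emph{completeness} direction I would argue by contraposition: suppose some $T\in\mathscr{U}_{\Theta'_{j-1}}$ has $T(j)=a_j$ and $T(i)=a$; I show $nonempty^3$ does not return $\emptyset$. From $\mathrm{Sig}(U^{[j-1]})=\mathrm{Sig}(\mathscr{U}_{\Theta'_{j-1}})$, fix $S$ with $\mathrm{Sig}_{CR}(S,U^{[j-1]})\wedge\mathrm{Sig}_{SS}(S,\Theta'_{j-1})$. The tuple $(j,a_j,a_j)$ is witnessed in $\mathscr{U}_{\Theta'_{j-1}}$ by $(T,T)$, so $S(j,a_j,a_j)$ holds and $V^{[0ja_ja_j]}=U^{[(j-1)ja_ja_j]}$ is a homomorphism whose $j$-th coordinate is $a_j$ and which, lying in $\mathscr{U}_{\Theta'_{j-1}}$, agrees with $T$ on coordinates $0,\ldots,j$. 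Starting from this seed, I replay inside $V$ the generation step of Theorem~\ref{alalksdjasd} confined to the outstanding coordinate $i$: the signature witnesses recorded in $S$ provide, inside $U^{[j-1]}$, the pair needed to realize the required tuple on coordinate $i$, and — since only the coordinates $\{j,i\}$ matter — a bounded number of $\mu$-applications (as in the proof of Lemma~\ref{djksfasgfljfsf}) produces a homomorphism with $\pi_{j,i}=(a_j,a)$, which is stored in $V$ within the $q^2+1$ available layers under an identifier below $seq(q^2+1,B)$. Hence $nonempty^3$ returns a non-empty map.

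The main obstacle, exactly as for $nonempty^1$, is the completeness direction: the Theorem~\ref{alalksdjasd}-style generation must be carried out \emph{within} $V^1$ without ever materialising the exponentially large relation $\mathscr{U}_{\Theta'_{j-1}}$, using only $U^{[j-1]}$, the signature set $S$ and Lemma~\ref{simplemm}; and one must verify that the bounded number of $\mu$-steps, together with the identifiers $g=\langle g_1,g_2,g_3\rangle$ they induce, respect the layer bound $q^2+1$ and the bound array $B$. Both checks are the direct analogues of the corresponding verifications for $nonempty^1$, with $q^2$ in place of $q^3$, so no new idea beyond the proof of Lemma~\ref{djksfasgfljfsf} is required.
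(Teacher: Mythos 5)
Your proposal is correct and takes essentially the same route as the paper: the paper's own proof of this lemma is just the one-line remark that it "is proved in a way similar to Lemma~\ref{djksfasgfljfsf}", and your write-up is precisely that adaptation (soundness of the layers $V^{[t]}$ via Lemma~\ref{simplemm}, completeness via the signature set $S$ and a Theorem~\ref{alalksdjasd}-style generation tracked only on the coordinates $j,i$, with the bound $q^2+1$ replacing $q^3+1$). Your explicit preliminary check that the restricted instance $\Theta'_{j-1}$ still corresponds to the algebra $([q],M)$ (using idempotency of $\mu$ so that singleton domains remain subalgebras) is a point the paper leaves implicit, and is a welcome addition.
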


\begin{lem}[Correctness of $fixvalues$]\label{alksjdca;syfrg}
Assume that $R^{[l-1]} \subseteq \mathscr{R}_{\Theta_{l-1}}$ and $\mathrm{Sig}(R^{[l-1]}) = \mathrm{Sig}(\mathscr{R}_{\Theta_{l-1}})$, $H_a\in \mathscr{R}_{\Theta_{l-1}}$, and $k < n$. Then $V^1$ proves that the function $fixvalues(R^{[l-1]}, H_a, k-1)$ outputs a compact representation $U^{[k-1]}$ of the instance $\mathscr{U}_{\Theta'_{k-1}}$, i.e., $U^{[k-1]} \subseteq \mathscr{U}_{\Theta'_{k-1}}$ and $\mathrm{Sig}(U^{[k-1]}) = \mathrm{Sig}(\mathscr{U}_{\Theta'_{k-1}})$.
\end{lem}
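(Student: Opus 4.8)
The plan is to prove, by $\Sigma^{1,b}_1$-induction on $j$ from $-1$ to $k-1$, the invariant that $U^{[j]}$ is a compact representation of $\mathscr{U}_{\Theta'_j}$, i.e. $U^{[j]}\subseteq\mathscr{U}_{\Theta'_j}$ together with $\mathrm{Sig}(U^{[j]})=\mathrm{Sig}(\mathscr{U}_{\Theta'_j})$; the lemma is then the instance $j=k-1$. I would use two observations throughout. First, from the definition of $\Theta'_j$ (taking its edge relations to be the restrictions of those of $\ddot{\mathcal{A}}$, which $M$ still preserves) one gets $\mathscr{U}_{\Theta'_j}=\{T\in\mathscr{U}_{\Theta'_{j-1}}:T(j)=a_j\}$. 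Second, by the inductive hypothesis every map stored in $U^{[j-1]}$ lies in $\mathscr{U}_{\Theta'_{j-1}}$, and hence already satisfies $T(0)=a_0,\dots,T(j-1)=a_{j-1}$. The base case $j=-1$ is exactly the hypothesis on $R^{[l-1]}$, since $U^{[-1]}=R^{[l-1]}$ and $\mathscr{U}_{\Theta'_{-1}}=\mathscr{R}_{\Theta_{l-1}}$.

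For the inductive step I would split on the two branches of the algorithm. If $(j,a_j,a_j)\notin\mathrm{Sig}(U^{[j-1]})$, then by the hypothesis $(j,a_j,a_j)\notin\mathrm{Sig}(\mathscr{U}_{\Theta'_{j-1}})$; since any pair $(T,T)$ with $T(j)=a_j$ witnesses that tuple, no $T\in\mathscr{U}_{\Theta'_{j-1}}$ has $T(j)=a_j$, so $\mathscr{U}_{\Theta'_j}=\emptyset$ and the invariant holds with $U^{[j]}=\emptyset$. Otherwise, the map copied under $\langle j,a_j,a_j\rangle$ (and, for $i<j$, under $\langle i,a_i,a_i\rangle$) lies in $\mathscr{U}_{\Theta'_{j-1}}$ with value $a_j$ at coordinate $j$, hence in $\mathscr{U}_{\Theta'_j}$. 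For $i>j$ the maps stored are $T:=nonempty^3(U^{[j-1]},j,i,\{(a_j,a)\})$ and $\mu(T,U^{[(j-1)iab]},U^{[(j-1)iba]})$; by Lemmas~\ref{=a=-sify34gtth} and~\ref{simplemm} these are homomorphisms, and a short computation using that $\mu$ is Mal'tsev and idempotent, that $T\in\mathscr{U}_{\Theta'_{j-1}}$ has $T(j)=a_j$, and that $U^{[(j-1)iab]}$ and $U^{[(j-1)iba]}$ agree on all coordinates $<i$ (in particular at $j$), shows that $\mu(T,\dots)$ agrees with $T$ on all coordinates $<i$, sends $0,\dots,j$ to $a_0,\dots,a_j$, and sends $i$ to $b$ while $T$ sends $i$ to $a$. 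Hence $U^{[j]}\subseteq\mathscr{U}_{\Theta'_j}$, so $\mathrm{Sig}(U^{[j]})\subseteq\mathrm{Sig}(\mathscr{U}_{\Theta'_j})$, and $U^{[j]}$ stays compact because at most two maps are added per tuple of $\mathrm{Sig}(U^{[j-1]})$ with $i>j$.

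For the reverse inclusion I would take $(i,a,b)\in\mathrm{Sig}(\mathscr{U}_{\Theta'_j})$, witnessed by $T_a,T_b\in\mathscr{U}_{\Theta'_j}$. If $i\le j$, then $a=b=a_i$ and $(i,a_i,a_i)\in\mathrm{Sig}(U^{[j]})$ because in the non-empty branch $U^{[j]}$ stores a map under $\langle i,a_i,a_i\rangle$. If $i>j$, then since $\mathscr{U}_{\Theta'_j}\subseteq\mathscr{U}_{\Theta'_{j-1}}$ we get $(i,a,b)\in\mathrm{Sig}(U^{[j-1]})$, so $U^{[j-1]}$ holds witnesses $U^{[(j-1)iab]},U^{[(j-1)iba]}$ for it; moreover $T_a\in\mathscr{U}_{\Theta'_{j-1}}$ has $T_a(j)=a_j$ and $T_a(i)=a$, so by Lemma~\ref{=a=-sify34gtth} the call $nonempty^3(U^{[j-1]},j,i,\{(a_j,a)\})$ returns a map $T$ with $T(j)=a_j$, $T(i)=a$. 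By the computation above, $T$ and $\mu(T,U^{[(j-1)iab]},U^{[(j-1)iba]})$ are exactly the maps the construction writes into $U^{[j]}$ under $\langle i,a,b\rangle$ and $\langle i,b,a\rangle$ (using the $a\le b$ / $b>a$ case split, and the conventions of Remarks~\ref{a=++++} and~\ref{&&^*&} when $a=b$), and together they witness $(i,a,b)$ in $U^{[j]}$. This yields $\mathrm{Sig}(U^{[j]})=\mathrm{Sig}(\mathscr{U}_{\Theta'_j})$, closing the induction.

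The step I expect to be the main obstacle is not any single algebraic verification but making the induction go through inside $V^1$: the invariant must be phrased as a $\Sigma^{1,b}_1$-formula that carries along the witnessing signature set $S$ from~(\ref{akajsgdfl7}), and one has to check that the maps actually written into $U^{[j]}$ under the indices $\langle i,a,b\rangle$ and $\langle i,b,a\rangle$ coincide with those produced by the Mal'tsev computation. This is exactly where the uniqueness-of-index conventions of Remarks~\ref{a=++++} and~\ref{&&^*&}, the fact that $nonempty^3$ is a genuine (well-defined) function, and the separate treatment of the $a=b$ corner case become essential.
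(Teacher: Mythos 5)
Your proposal is correct and follows essentially the same route as the paper: induction on $j$ with the invariant that $U^{[j]}$ is a compact representation of $\mathscr{U}_{\Theta'_j}$, using Lemma~\ref{=a=-sify34gtth} for the maps returned by $nonempty^3$ and Lemma~\ref{simplemm} for the maps produced by $\mu$. Your treatment is somewhat more explicit than the paper's (the empty branch, the reverse signature inclusion, and the Mal'tsev computation showing $\mu(T,T_a,T_b)$ agrees with $T$ below coordinate $i$ and sends $i$ to $b$), but these are elaborations of the same argument rather than a different one.
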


\begin{proof}
First, by construction, we have $U^{[-1]} \subseteq \mathscr{U}_{\Theta'_{-1}}$ and $\mathrm{Sig}(U^{[-1]}) = \mathrm{Sig}(\mathscr{U}_{\Theta'_{-1}})$. To prove the lemma, it suffices to show that for every $0 \leq j < k-1$, if 
\[
U^{[j-1]} \subseteq \mathscr{U}_{\Theta'_{j-1}} \quad \text{and} \quad \mathrm{Sig}(U^{[j-1]}) = \mathrm{Sig}(\mathscr{U}_{\Theta'_{j-1}}),
\]
then
\[
U^{[j]} \subseteq \mathscr{U}_{\Theta'_{j}} \quad \text{and} \quad \mathrm{Sig}(U^{[j]}) = \mathrm{Sig}(\mathscr{U}_{\Theta'_{j}}).
\]

By the construction of the instances, it is clear that $\mathscr{U}_{\Theta'_{j}} \subseteq \mathscr{U}_{\Theta'_{j-1}}$, and hence $\mathrm{Sig}(\mathscr{U}_{\Theta'_{j}}) \subseteq \mathrm{Sig}(\mathscr{U}_{\Theta'_{j-1}})$. The only maps that can belong to $\mathscr{U}_{\Theta'_{j}}$ are those that send $0$ to $a_0$, \ldots , and $j$ to $a_j$. Therefore, if $\mathscr{U}_{\Theta'_{j}}$ is non-empty, the tuple $(j, a_j, a_j)$ must belong to $\mathrm{Sig}(U^{[j-1]})$. We copy to $U^{[j]}$ a map $T$ with this index only if such a map exists in $U^{[j-1]}$. Since $U^{[j-1]} \subseteq \mathscr{U}_{\Theta'_{j-1}}$ and $T(j) = a_j$, it follows that $T \in \mathscr{U}_{\Theta'_{j}}$. This is the only map we copy to $U^{[j]}$ from $U^{[j-1]}$ for $i \leq j$. Note that no other maps are needed for these coordinates, as we do not need to generate additional values.

For all $(i, a, b) \in \mathrm{Sig}(\mathscr{U}_{\Theta'_{j-1}})$ with $i > j$, we store in $U^{[j i a b]}$ the map $$nonempty^3(U^{[j-1]}, j, i, \{(a_j, a)\}),$$ and in $U^{[j i b a]}$ the map 
\[
\mu(nonempty^3(U^{[j-1]}, j, i, \{(a_j, a)\}), U^{[(j-1) i a b]}, U^{[(j-1) i b a]}).
\]
In the first case, by Lemma~\ref{=a=-sify34gtth}, this is a homomorphism $T$ from $\mathscr{U}_{\Theta'_{j-1}}$ such that $\pi_{j i} T = (a_j, a)$, and hence $T \in \mathscr{U}_{\Theta'_{j}}$. In the second case, by Lemma~\ref{simplemm}, the result is a homomorphism $T$ from $\mathscr{U}_{\Theta'_{j}}$ such that $\pi_{j i} T = (a_j, b)$. Thus, $U^{[j]} \subseteq \mathscr{U}_{\Theta'_{j}}$. The equality $\mathrm{Sig}(U^{[j]}) = \mathrm{Sig}(\mathscr{U}_{\Theta'_{j}})$ follows from the correctness of the function $nonempty^3$.
\end{proof}

 \subsubsection{Formalization of the function $next$}\label{+++)_)_)KJ}

It remains to formalize the main procedure of the algorithm, which incorporates all the previously defined subroutines. This function takes a compact representation $R^{[l-1]}$ of the solution set $\mathscr{R}_{\Theta_{l-1}}$, extracts the minimal edge of the instance $\Theta_l$, and outputs a compact representation $R^{[l]}$ of the solution set $\mathscr{R}_{\Theta_l}$.

\begin{algorithm}[H]
\caption{: $next(R^{[l-1]}, \langle i,j\rangle, E^{[\langle i,j\rangle]}_{\ddot{\mathcal{A}}})$}\label{alg:cap}
\begin{algorithmic}[1] 
\State $W:=\emptyset$
\For{ all $(k,a,b)\in n\times D_i^2$} 
  \State $H_a:=nonempty^1(R^{[l-1]},  i,j\,k, E^{[ij]}_{\ddot{\mathcal{A}}}\times\{a\})$
  \If{$H_a\neq \emptyset $}
       \State $H_b:=nonempty^2(fixvalues(R^{[l-1]},H_a, k-1),  i,j,k, E^{[ij]}_{\ddot{\mathcal{A}}}\times\{b\})$
       \If{$H_b\neq \emptyset$}
          \State $W:=W\cup\{H_a,H_b\} $      
       \EndIf{}
  \EndIf{}  
\EndFor \\

\Return{$W$}
\end{algorithmic}
\end{algorithm}

We note that in the original paper~\cite{doi:10.1137/050628957}, this function is referred to as $next-beta$ and is used as a subroutine within a broader function called $next$. The separation was introduced to avoid potentially exponential running time, as the original setting did not restrict the arity of constraints. However, this concern does not arise in our case, so we simplify the construction and refer to this function directly as $next$.

The formalization of this function is straightforward and builds directly on the components introduced above. The function $nonempty^2$ is defined in exactly the same way as $nonempty^1$, with the sole difference that in $V^{[0]}$ we copy the set $U^{[k-1]}$ returned by the function $fixvalues$. Then the function $next(R^{[l-1]}, \langle i,j\rangle, E^{[\langle i,j\rangle]})$ returns the set $W$ where
\begin{equation}
    \begin{gathered}
\forall i<n\forall a \leq b< q \forall r<n\forall c<q \\W(i,a,b, r, c) \iff nonempty^1(R^{[l-1]},  i,j\,k, E^{[ij]}_{\ddot{\mathcal{A}}}\times\{a\})(r,c)\wedge \\
nonempty^2(fixvalues(R^{[l-1]},nonempty^1(R^{[l-1]},  i,j\,k, E^{[ij]}_{\ddot{\mathcal{A}}}\times\{a\}), k-1), \\ i,j,k, E^{[ij]}_{\ddot{\mathcal{A}}}\times\{b\}) \neq \emptyset.
    \end{gathered}
\end{equation}
and 
\begin{equation}
    \begin{gathered}
\forall i<n\forall a < b< q \forall r<n\forall c<q \\ W(i,b,a, r, c) \iff nonempty^1(R^{[l-1]},  i,j\,k, E^{[ij]}_{\ddot{\mathcal{A}}}\times\{a\})\neq \emptyset\\
nonempty^2(fixvalues(R^{[l-1]},nonempty^1(R^{[l-1]},  i,j\,k, E^{[ij]}_{\ddot{\mathcal{A}}}\times\{a\}), k-1), \\ i,j,k, E^{[ij]}_{\ddot{\mathcal{A}}}\times\{b\})(r,c).
    \end{gathered}
\end{equation}
We again use case distinction since $H_a$ and $H_b$ are constructed differently. The function $next$ is well-defined, as all of its constituent functions are well-defined. The formalization of this function completes the formalization of the algorithm.

To carry out the formalization, we need to construct only a few sets:
\begin{itemize}
    \item $V_1$, $B_1$ for $nonempty^1$; $V_2$, $B_2$ for $nonempty^2$; and $V_3$, $B_3$ for $nonempty^3$,
    \item $U$ for $fixvalues$, and
    \item $W$ for $next$.
\end{itemize}

All of these sets are constructed in parallel using $\Sigma^{1,b}_1$-induction, and each new layer of every set depends only on the previous layers of that set or on earlier layers of other sets. For example, the set $W^{[m]}$ consists of only $m$ layers. The innermost set, $V_3$, has the following structure: 
\[
V_3^{[m][nqq][n][nqq][q^2+1]}.
\]
This reflects that, during a single run of $nonempty^3$, we construct at most $(q^2 + 1)$ layers of the set $V$. For each inner call to $fixvalues$, we invoke $nonempty^3$ for at most $n \times q^2$ tuples, across at most $n$ values. Subsequently, we call $nonempty^2$ for up to $n \times q^2$ tuples, over at most $m$ layers of $W$.

It is clear that the sizes of all these sets are polynomial in the number of variables. Therefore, this construction remains valid for instances of arbitrary size.

\begin{lem}[Correctness of $nonempty^2$]\label{lalaskksdjjd}
Assume that $U^{[k-1]} \subseteq \mathscr{U}_{\Theta'_{k-1}}$ and $\mathrm{Sig}(U^{[k-1]}) = \mathrm{Sig}(\mathscr{U}_{\Theta'_{k-1}})$. Then 
$V^1$ proves that $nonempty^2(U^{[k-1]}, i, j, k, E^{[ij]}_{\ddot{\mathcal{A}}} \times \{a\})$ either outputs a homomorphism $T$ from $\mathcal{X}'_{k-1}$ to $\ddot{\mathcal{A}}'_{k-1}$ such that $\pi_{i, j, k}T \subseteq E^{[ij]}_{\ddot{\mathcal{A}}} \times \{a\}$, or, if
\[
nonempty^2(U^{[k-1]}, i, j, k, E^{[ij]}_{\ddot{\mathcal{A}}} \times \{a\}) = \emptyset,
\]
then there is no homomorphism $T \in \mathscr{U}_{\Theta'_{k-1}}$ such that $\pi_{i, j, k}T \subseteq E^{[ij]}_{\ddot{\mathcal{A}}} \times \{a\}$.
\end{lem}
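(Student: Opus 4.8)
The plan is to follow the proof of Lemma~\ref{djksfasgfljfsf} almost verbatim. As observed when $nonempty^2$ was introduced, it is defined exactly like $nonempty^1$ except that the base layer $V^{[0]}$ of $nonempty^2$ copies the set $U^{[k-1]}$ returned by $fixvalues$ instead of $R^{[l-1]}$, and the closure is again bounded by level $q^3$ since we still restrict attention to three coordinates $i,j,k$. Accordingly, throughout the argument $R^{[l-1]}$, $\mathscr{R}_{\Theta_{l-1}}$, $\mathcal{X}_{l-1}$, $\ddot{\mathcal{A}}$ are replaced by $U^{[k-1]}$, $\mathscr{U}_{\Theta'_{k-1}}$, $\mathcal{X}'_{k-1}$, $\ddot{\mathcal{A}}'_{k-1}$, and the two hypotheses $U^{[k-1]}\subseteq\mathscr{U}_{\Theta'_{k-1}}$ and $\mathrm{Sig}(U^{[k-1]})=\mathrm{Sig}(\mathscr{U}_{\Theta'_{k-1}})$ take over the roles of the corresponding hypotheses on $R^{[l-1]}$. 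The sets $V_2$ and $B_2$ used here were already shown, in the discussion preceding this lemma, to be constructible by $\Sigma^{1,b}_1$-induction, so all the reasoning below stays inside $V^1$.

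For the soundness direction, I would first unfold $V^{[0]}=U^{[k-1]}$ together with $U^{[k-1]}\subseteq\mathscr{U}_{\Theta'_{k-1}}$ to get that every $V^{[0g]}$ (for $g<b_s$) is a homomorphism from $\mathcal{X}'_{k-1}$ to $\ddot{\mathcal{A}}'_{k-1}$. Then a $\Sigma^{1,b}_1$-induction on $t\leq q^3$, applying Lemma~\ref{simplemm} at each step (every element added to $V^{[t]}$ has the form $\mu(V^{[(t-1)g_1]},V^{[(t-1)g_2]},V^{[(t-1)g_3]})$ evaluated on homomorphisms), yields that every $V^{[q^3 g]}$ is a homomorphism. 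Since the final layer only retains those maps whose values on $i,j$ lie in $E^{[ij]}_{\ddot{\mathcal{A}}}$ and whose value on $k$ is $a$, it follows directly from the bit-definition of $nonempty^2$ that whenever the output $T$ is nonempty it is a homomorphism with $\pi_{i,j,k}T\subseteq E^{[ij]}_{\ddot{\mathcal{A}}}\times\{a\}$.

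For the completeness direction I would argue the contrapositive: assuming some $T\in\mathscr{U}_{\Theta'_{k-1}}$ with $T(i)=b$, $T(j)=c$, $E^{[ij]}_{\ddot{\mathcal{A}}}(b,c)$ and $T(k)=a$, show that $V^{[q^3+1]}$ is nonempty. From $\mathrm{Sig}(U^{[k-1]})=\mathrm{Sig}(\mathscr{U}_{\Theta'_{k-1}})$ one extracts the witnessing $S$ with $\mathrm{Sig}_{CR}(S,U^{[k-1]})$ and $\mathrm{Sig}_{SS}(S,\Theta')$, which forces the relevant signature tuples (beginning with $(i,b,b)$) to be present and the corresponding maps to be stored in $U^{[k-1]}$ under the appropriate indices; then one replays the generation argument of Theorem~\ref{alalksdjasd}, restricted to the three coordinates $i,j,k$, so that a bounded number of applications of $\mu$ (two, once the right element of the representation is chosen) produces a homomorphism inside $V^{[q^3]}$ whose projection onto $i,j,k$ is exactly $(b,c,a)$, whence $V^{[q^3+1]}\neq\emptyset$. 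The only mildly delicate point --- and it is exactly the one already handled inside Lemma~\ref{djksfasgfljfsf}, so it transfers unchanged --- is checking that these finitely many intermediate $\mu$-applications are actually added to $V$ (i.e.\ each genuinely differs on $\{i,j,k\}$ from what is already stored) and that the closure bound $q^3$ suffices for them to appear by level $q^3$. Formalizing this ``three-coordinate replay of Theorem~\ref{alalksdjasd}'' in $V^1$ --- pinning down which indices of $S$ and $U^{[k-1]}$ to feed to $\mu$, and verifying the prefix-agreement bookkeeping that makes the Mal'tsev identities fire --- is where essentially all of the (small) work lies; the rest is routine unfolding of $\Sigma^{1,b}_0$ definitions together with the $\Sigma^{1,b}_1$-induction already available in $V^1$.
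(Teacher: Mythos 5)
Your proposal is correct and follows exactly the route the paper intends: the paper states this lemma without proof, relying on the observation that $nonempty^2$ differs from $nonempty^1$ only in that its base layer $V^{[0]}$ copies $U^{[k-1]}$ instead of $R^{[l-1]}$, so the proof of Lemma~\ref{djksfasgfljfsf} carries over with $\mathscr{R}_{\Theta_{l-1}}$, $\mathcal{X}_{l-1}$ replaced by $\mathscr{U}_{\Theta'_{k-1}}$, $\mathcal{X}'_{k-1}$. Your two directions (soundness via Lemma~\ref{simplemm} plus $\Sigma^{1,b}_1$-induction on the layers $t\leq q^3$, and completeness via the witnessing signature set $S$ and the three-coordinate replay of Theorem~\ref{alalksdjasd}) are precisely the argument given there, so nothing further is needed.
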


The following theorem follows immediately from Lemmas~\ref{djksfasgfljfsf},~\ref{=a=-sify34gtth},~\ref{alksjdca;syfrg}, and~\ref{lalaskksdjjd}.

\begin{thm}[Correctness of $next$]\label{alalksldkjk}
Assume that $R^{[l-1]} \subseteq \mathscr{R}_{\Theta_{l-1}}$ and $\mathrm{Sig}(R^{[l-1]}) = \mathrm{Sig}(\mathscr{R}_{\Theta_{l-1}})$. Then 
$V^1$ proves that 
\[
next(R^{[l-1]}, \langle i, j \rangle, E^{[\langle i, j \rangle]}_{\ddot{\mathcal{A}}})
\]
outputs a compact representation of $\mathscr{R}_{\Theta_l}$, that is, a set $R^{[l]} \subseteq \mathscr{R}_{\Theta_l}$ such that $\mathrm{Sig}(R^{[l]}) = \mathrm{Sig}(\mathscr{R}_{\Theta_l})$.

\end{thm}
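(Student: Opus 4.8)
The plan is to read the statement off the four subroutine correctness lemmas (Lemmas~\ref{djksfasgfljfsf}, \ref{=a=-sify34gtth}, \ref{alksjdca;syfrg} and~\ref{lalaskksdjjd}) by unwinding the definition of $next$. Recall that $\Theta_l$ arises from $\Theta_{l-1}$ by adding the single edge $\langle i,j\rangle=\min(E_{\mathcal{X}_l})$, so that a map is a homomorphism from $\mathcal{X}_l$ to $\ddot{\mathcal{A}}$ exactly when it is a homomorphism from $\mathcal{X}_{l-1}$ to $\ddot{\mathcal{A}}$ whose pair of values at $i,j$ lies in $E^{[ij]}_{\ddot{\mathcal{A}}}$; also all the instances $\Theta_0,\ldots,\Theta_m$ share the target $\ddot{\mathcal{A}}$ and hence correspond to the same algebra $([q],M)$. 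For each triple $(k,a,b)$ with $k<n$ the function $next$ computes $H_a:=nonempty^1(R^{[l-1]},i,j,k,E^{[ij]}_{\ddot{\mathcal{A}}}\times\{a\})$ and, if $H_a\neq\emptyset$, $H_b:=nonempty^2(fixvalues(R^{[l-1]},H_a,k-1),i,j,k,E^{[ij]}_{\ddot{\mathcal{A}}}\times\{b\})$; when both are non-empty it stores $H_a$ and $H_b$ under the two indices $\langle k,a,b\rangle$ and $\langle k,b,a\rangle$ of $W$. Writing $R^{[l]}$ for the output $W$, two things must be proved inside $V^1$: $R^{[l]}\subseteq\mathscr{R}_{\Theta_l}$ and $\mathrm{Sig}(R^{[l]})=\mathrm{Sig}(\mathscr{R}_{\Theta_l})$; compactness will be immediate.

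For $R^{[l]}\subseteq\mathscr{R}_{\Theta_l}$: every map stored in $W$ is an output of $nonempty^1$, or of $nonempty^2$ applied to the output of $fixvalues$, and (using $\mathcal{X}'_{k-1}=\mathcal{X}_{l-1}$) Lemmas~\ref{djksfasgfljfsf}, \ref{alksjdca;syfrg} and~\ref{lalaskksdjjd} guarantee that any such map is a homomorphism from $\mathcal{X}_{l-1}$ to $\ddot{\mathcal{A}}$ whose projection onto $i,j,k$ lies in $E^{[ij]}_{\ddot{\mathcal{A}}}\times\{a\}$ (respectively $\times\{b\}$); in particular it respects the edge $\langle i,j\rangle$, so it lies in $\mathscr{R}_{\Theta_l}$. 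Since $W$ stores at most one map per index $\langle k,a,b\rangle$, and its definition makes $W^{[kab]}$ empty whenever $nonempty^1$ or $nonempty^2$ returns $\emptyset$, the set $W$ holds maps only at indices lying in $\mathrm{Sig}(\mathscr{R}_{\Theta_l})$ and at most $2\,|\mathrm{Sig}(\mathscr{R}_{\Theta_l})|$ of them; hence $W$ is a compact representation and $\mathrm{Sig}(W)\subseteq\mathrm{Sig}(\mathscr{R}_{\Theta_l})$.

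The substance is $\mathrm{Sig}(\mathscr{R}_{\Theta_l})\subseteq\mathrm{Sig}(W)$. Fix $(k,a,b)\in\mathrm{Sig}(\mathscr{R}_{\Theta_l})$, witnessed by homomorphisms $T_a,T_b\in\mathscr{R}_{\Theta_l}$ that agree on all coordinates below $k$, with $T_a(k)=a$, $T_b(k)=b$. Since $T_a\in\mathscr{R}_{\Theta_l}\subseteq\mathscr{R}_{\Theta_{l-1}}$ satisfies $(T_a(i),T_a(j))\in E^{[ij]}_{\ddot{\mathcal{A}}}$ and $T_a(k)=a$, it is a witness ruling out the empty-output case of Lemma~\ref{djksfasgfljfsf}, so $nonempty^1$ returns a non-empty $H_a$, a homomorphism with $(H_a(i),H_a(j))\in E^{[ij]}_{\ddot{\mathcal{A}}}$ and $H_a(k)=a$; in particular $H_a\in\mathscr{R}_{\Theta_l}$. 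By Lemma~\ref{alksjdca;syfrg}, $fixvalues(R^{[l-1]},H_a,k-1)$ is a compact representation of $\mathscr{U}_{\Theta'_{k-1}}$, the instance obtained from $\Theta_{l-1}$ by freezing $D'_r=\{H_a(r)\}$ for $r<k$. Now put $T:=\mu(H_a,T_a,T_b)$. Since $H_a,T_a,T_b\in\mathscr{R}_{\Theta_l}$ and $\Theta_l$ corresponds to $([q],M)$, Lemma~\ref{simplemm} gives $T\in\mathscr{R}_{\Theta_l}$; and as $T_a,T_b$ agree below $k$, the Mal'tsev identities give $T(r)=\mu(H_a(r),T_a(r),T_a(r))=H_a(r)$ for $r<k$ and $T(k)=\mu(a,a,b)=b$. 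Thus $T\in\mathscr{U}_{\Theta'_{k-1}}$ with $(T(i),T(j))\in E^{[ij]}_{\ddot{\mathcal{A}}}$ and $T(k)=b$, so $T$ witnesses that the empty-output case of Lemma~\ref{lalaskksdjjd} cannot occur, and $nonempty^2(fixvalues(R^{[l-1]},H_a,k-1),i,j,k,E^{[ij]}_{\ddot{\mathcal{A}}}\times\{b\})$ returns a non-empty $H_b$. Hence $next$ stores both $H_a$ and $H_b$ in $W$; as $H_b\in\mathscr{U}_{\Theta'_{k-1}}$ we have $H_b(r)=H_a(r)$ for $r<k$, while $H_a(k)=a$ and $H_b(k)=b$, so $(H_a,H_b)$ witnesses $(k,a,b)$ and $(k,a,b)\in\mathrm{Sig}(W)$. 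Every step quantifies only over sets polynomially bounded in $n$ with $\Sigma^{1,b}_0$ matrices, and each lemma invoked is already available in $V^1$, so the whole argument formalizes in $V^1$.

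The one non-routine point — the main obstacle — is the correction $T:=\mu(H_a,T_a,T_b)$ in the previous paragraph. One cannot simply feed $T_a,T_b$ into the call to $nonempty^2$, because $nonempty^1$ is only promised to return \emph{some} homomorphism $H_a$ with the required values at $i,j,k$, and this $H_a$ need not agree with $T_a$ on the coordinates below $k$ that $fixvalues$ has frozen. The Mal'tsev trick repairs $T_b$ relative to $H_a$, and it is legitimate precisely because $\mathscr{R}_{\Theta_l}$ is invariant under $\mu$ — exactly the statement of Lemma~\ref{simplemm}. This is also why the correctness argument never needs the full generating statement of Theorem~\ref{alalksdjasd}, whose proof manipulates exponentially large sets; everything else here is bookkeeping over the four subroutine lemmas.
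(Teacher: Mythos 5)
Your proof is correct and takes essentially the same route as the paper, whose own proof of Theorem~\ref{alalksldkjk} is just the assertion that it follows immediately from Lemmas~\ref{djksfasgfljfsf}, \ref{=a=-sify34gtth}, \ref{alksjdca;syfrg} and~\ref{lalaskksdjjd}. The detail you supply explicitly --- the witness repair $T:=\mu(H_a,T_a,T_b)$, needed because the $H_a$ returned by $nonempty^1$ need not agree with $T_a$ below coordinate $k$, justified via Lemma~\ref{simplemm} --- is precisely the glue the paper leaves implicit, and is the standard Mal'tsev trick from Theorem~\ref{alalksdjasd}.
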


\subsection{Mal’tsev Upper Bound}\label{alskdjf';lahdg'a}

\begin{thm}\label{jkjkajsflageee}
For any relational structure $\mathcal{A}$ that corresponds to an algebra with a Mal'tsev operation, theory $V^1$ proves the soundness of Mal'tsev algorithm:
\[
V^1 \vdash \forall \mathcal{X} \, \forall \ddot{\mathcal{A}} \, \forall R  \;\; \left( \mathrm{Comp}(R, \mathcal{X}, \ddot{\mathcal{A}}) \wedge R^{[\#E_{\mathcal{X}}]} = \emptyset \longrightarrow \neg \mathrm{Hom}(\mathcal{X}, \ddot{\mathcal{A}}) \right),
\]
where
\begin{gather*}
\mathrm{Comp}(R, \mathcal{X}, \ddot{\mathcal{A}}) \iff \forall i < n \; \forall a, b \in V^{[i]}_{\ddot{\mathcal{A}}} \;\;
R^{[0]}(i, a, b\bl, i, a\bl) \wedge \forall j \neq i < n \;\; R^{[0]}(i, a, b\bl, j, d_j\bl), \\
\forall 0 < l \leq \#E_{\mathcal{X}} \;\; R^{[l]} = next(R^{[l-1]}, \min(E_{\mathcal{X}_l}), E^{[\min(E_{\mathcal{X}_l})]}_{\ddot{\mathcal{A}}}).
\end{gather*}
\end{thm}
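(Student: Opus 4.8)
The plan is to argue inside $V^1$, with $\mathcal{X}, \ddot{\mathcal{A}}, R$ free and the two hypotheses $\mathrm{Comp}(R,\mathcal{X},\ddot{\mathcal{A}})$ and $R^{[\#E_{\mathcal{X}}]} = \emptyset$ assumed, and to prove by induction on $l$ the invariant $\Phi(l)$ stating that $R^{[l]} \subseteq \mathscr{R}_{\Theta_l}$ and $\mathrm{Sig}(R^{[l]}) = \mathrm{Sig}(\mathscr{R}_{\Theta_l})$, for $0 \le l \le m := \#E_{\mathcal{X}}$, where $\Theta_0, \ldots, \Theta_m$ is the sequence of instances fixed in Section~\ref{alskhf;shgf} (so $\mathcal{X}_0 = ([n], \emptyset)$ and $\mathcal{X}_m = \mathcal{X}$). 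Nothing has to be constructed here: the layers $R^{[l]}$ come with the outermost $\forall R$, and $\mathrm{Comp}(R, \mathcal{X}, \ddot{\mathcal{A}})$ says precisely that $R^{[0]}$ is the explicit set whose row indexed $(i,a,b)$ (for $a, b \in D_i$) is the map sending $i \mapsto a$ and $j \mapsto \min(D_j)$ for $j \neq i$ — the domain-aware form of Example~\ref{alksdj;asjfegh} — and that $R^{[l]} = next\bigl(R^{[l-1]}, \min(E_{\mathcal{X}_l}), E^{[\min(E_{\mathcal{X}_l})]}_{\ddot{\mathcal{A}}}\bigr)$ for $0 < l \le m$.

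For the base case, $E_{\mathcal{X}_0} = \emptyset$, so $\mathscr{R}_{\Theta_0}$ is exactly the set of well-defined maps $[n] \to [q]$ respecting all the domains $D_i$; the rows of $R^{[0]}$ are such maps, giving $R^{[0]} \subseteq \mathscr{R}_{\Theta_0}$, and taking $S := \{(i,a,b) : a, b \in D_i\}$ (available by $\Sigma^{1,b}_0$-COMP, bounded by $b_s$) one verifies $\mathrm{Sig}_{CR}(S, R^{[0]})$ and $\mathrm{Sig}_{SS}(S, \Theta_0)$ directly, using that for $a, b \in D_i$ the two corresponding rows form a pair of homomorphisms of $\Theta_0$ witnessing $(i,a,b)$, while every homomorphism of $\Theta_0$ takes values in the domains (alternatively one may invoke the earlier lemma that $V^1$ proves $R^{[0]}$ generates $\mathscr{R}_{\Theta_0}$). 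For the inductive step, assume $\Phi(l-1)$ with $0 < l \le m$: by the construction of the $\mathcal{X}_l$'s, $\mathcal{X}_l$ is $\mathcal{X}_{l-1}$ with the single edge $\langle i,j\rangle := \min(E_{\mathcal{X}_l})$ adjoined, the fixed target $\ddot{\mathcal{A}}$ supplies $E^{[ij]}_{\ddot{\mathcal{A}}}$, so $\Theta_l$ is $\Theta_{l-1}$ with one constraint added; since $\mathrm{Comp}$ gives $R^{[l]} = next(R^{[l-1]}, \langle i,j\rangle, E^{[ij]}_{\ddot{\mathcal{A}}})$, Theorem~\ref{alalksldkjk} (correctness of $next$) applies verbatim and yields $\Phi(l)$. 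Induction on $l$ then gives $\Phi(m)$.

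The conclusion is then read off $\Phi(m)$ and the hypothesis $R^{[m]} = \emptyset$ by contradiction. Suppose $H < b_h$ with $Hom(\mathcal{X}, \ddot{\mathcal{A}}, H)$. Since $\mathcal{X}_m = \mathcal{X}$, we have $H \in \mathscr{R}_{\Theta_m}$, so the pair $(H, H)$ witnesses $(0, H(0), H(0)) \in \mathrm{Sig}(\mathscr{R}_{\Theta_m})$. On the other hand $\Phi(m)$ supplies $S < b_s$ with $\mathrm{Sig}_{CR}(S, R^{[m]}) \wedge \mathrm{Sig}_{SS}(S, \Theta_m)$; because $R^{[m]} = \emptyset$, every row $R^{[m\,iab]}$ is empty, so the right-hand side of the $\mathrm{Sig}_{CR}$ biconditional fails for every $(i,a,b)$, forcing $S$ to have no elements, whereupon $\mathrm{Sig}_{SS}(\emptyset, \Theta_m)$ states that no tuple is witnessed by a pair of homomorphisms of $\Theta_m$ — contradicting $(0, H(0), H(0))$. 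Hence $\neg\,\mathrm{Hom}(\mathcal{X}, \ddot{\mathcal{A}})$, which is the assertion of the theorem (the extended Frege upper bound then follows by the translation machinery recalled in Section~\ref{==-asl;iyfg;iy}).

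The step I expect to be the main obstacle is confirming that the induction runs for a formula whose induction scheme is available in $V^1$. As written in~(\ref{akajsgdfl7}), $\mathrm{Sig}(R^{[l]}) = \mathrm{Sig}(\mathscr{R}_{\Theta_l})$ carries an existential set quantifier and hides inside $\mathrm{Sig}_{SS}$ a biconditional with a bounded set quantifier on one side, so $\Phi(l)$ is not literally $\Sigma^{1,b}_1$. The way around this: the witness $S$ is uniquely $\Sigma^{1,b}_0$-definable from $R^{[l]}$ through $\mathrm{Sig}_{CR}$, so $\Sigma^{1,b}_0$-COMP eliminates the outer $\exists S$; and, granted the other conjunct $R^{[l]} \subseteq \mathscr{R}_{\Theta_l}$, the implication $S(i,a,b) \to \exists T_a,T_b(\ldots)$ inside $\mathrm{Sig}_{SS}$ becomes automatic, since the two rows of $R^{[l]}$ indexed by $(i,a,b)$ and $(i,b,a)$ are the required witnessing homomorphisms; what is left is the converse implication, which is $\Pi^{1,b}_1$. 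Thus $\Phi(l)$ is $V^1$-provably equivalent to a $\Pi^{1,b}_1$ formula, and $V^1$ proves $\Pi^{1,b}_1$-induction (interderivable with $\Sigma^{1,b}_1$-IND over $2$-BASIC using limited subtraction), so the induction is legitimate. Everything else is routine bookkeeping, as the constructibility of the auxiliary sets and the correctness of $next$ (Theorem~\ref{alalksldkjk}, built on Lemmas~\ref{djksfasgfljfsf}, \ref{=a=-sify34gtth}, \ref{alksjdca;syfrg} and~\ref{lalaskksdjjd}) are already in place.
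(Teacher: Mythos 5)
Your proposal is correct and follows essentially the same route as the paper: establish $R^{[0]} \subseteq \mathscr{R}_{\Theta_0}$ and $\mathrm{Sig}(R^{[0]}) = \mathrm{Sig}(\mathscr{R}_{\Theta_0})$ from the construction, propagate the invariant through the levels via Theorem~\ref{alalksldkjk}, and conclude from $R^{[\#E_{\mathcal{X}}]} = \emptyset$ that the signature of $\mathscr{R}_{\Theta_m}$ is empty, which rules out any homomorphism since $(T,T)$ would witness $(i,T(i),T(i))$. Your closing observation --- that the invariant is not literally $\Sigma^{1,b}_1$ but becomes $\Pi^{1,b}_1$ once the unique witness $S$ is eliminated via $\Sigma^{1,b}_0$-COMP and the forward direction of $\mathrm{Sig}_{SS}$ is discharged by $R^{[l]} \subseteq \mathscr{R}_{\Theta_l}$ --- is a correct and worthwhile refinement of a point the paper leaves implicit.
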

\begin{proof}
Consider any unsatisfiable CSP instance $\Theta = (\mathcal{X}, \ddot{\mathcal{A}})$ corresponding to an algebra with a Mal'tsev operation. Let $R$ be the set generated as in Section \ref{alskhf;shgf} during the execution of the algorithm on input $\mathcal{X}$. Then the fact that $R^{[0]} \subseteq \mathscr{R}_{\Theta_0}$ and $\mathrm{Sig}(R^{[0]}) = \mathrm{Sig}(\mathscr{R}_{\Theta_0})$ follows directly from the construction. Lemmas~\ref{djksfasgfljfsf},~\ref{=a=-sify34gtth},~\ref{lalaskksdjjd},~\ref{alksjdca;syfrg}, and~\ref{alalksldkjk} establish the correctness of the procedures $nonempty^1$, $nonempty^2$, $nonempty^3$, $fixvalues$, and $next$. Thus, theory $V^1$ proves the following: if $R^{[l-1]} \subseteq \mathscr{R}_{\Theta_{l-1}}$ and $\mathrm{Sig}(R^{[l-1]}) = \mathrm{Sig}(\mathscr{R}_{\Theta_{l-1}})$, then
$$next(R^{[l-1]}, \min(E_{\mathcal{X}_l}), E^{[\min(E_{\mathcal{X}_l})]}_{\ddot{\mathcal{A}}})$$
outputs a set $R^{[l]} \subseteq \mathscr{R}_{\Theta_l}$ such that $\mathrm{Sig}(R^{[l]}) = \mathrm{Sig}(\mathscr{R}_{\Theta_l})$.

Now, if $R^{[\#E_{\mathcal{X}}]} = \emptyset$, then from formula~\eqref{eq1232346}, it follows that for any set $S$ satisfying $\mathrm{Sig}_{CR}(S, R^{[\#E_{\mathcal{X}}]})$, we have $\neg S(i, a, b)$ for all $i < n$ and all $a, b \in D_i$. Therefore, from formulas~\eqref{eqj3jkr} and~\eqref{akajsgdfl7}, we obtain:
\begin{gather*}
\forall i < n \, \forall a<q \, \forall b < q \;\; 
\forall T_a, T_b \leq b_h \\
T_a(i) \neq a \vee T_b(i) \neq b \vee \exists j < i \;\; T_a(j) \neq T_b(j) \vee \neg Hom(\mathcal{X}, \ddot{\mathcal{A}}, T_a) \vee \neg Hom(\mathcal{X}, \ddot{\mathcal{A}}, T_b).
\end{gather*}
Since for any map $T : [n] \to [q]$ and for all $i < n$ there exists $a < q$ such that $T(i) = a$ and $T(j) = T(j)$ for all $j < i$, it follows that $\neg Hom(\mathcal{X}, \ddot{\mathcal{A}}, T)$. Hence, the instance $\Theta$ is unsatisfiable.

\end{proof}

Note that we have shown that the following formula is $\Sigma^{1,b}_0$.
\begin{equation}
    \begin{gathered}
        \varphi(\mathcal{X} ,\ddot{\mathcal{A}}, R, T ) = Comp_{}(R,\mathcal{X},\ddot{\mathcal{A}})\wedge  R^{[\#E_{\mathcal{X}}]}=\emptyset \longrightarrow \neg Hom(\mathcal{X},\ddot{\mathcal{A}}, T).
    \end{gathered}
\end{equation}
The upper bound for Mal'tsev CSPs then follows from Theorems \ref{Translation} and \ref{jkjkajsflageee}. 

\begin{thm}[Mal'tsev Upper Bound]
For any relational structure $\mathcal{A}$ that corresponds to an algebra with a Mal'tsev operation, there exists a polynomial-time algorithm $P$ such that, for any unsatisfiable instance $\mathcal{X}$ of CSP($\mathcal{A}$), the output $P(\mathcal{X})$ is a propositional proof in the extended Frege proof system of the propositional translation $||\neg Hom(\mathcal{X}, \mathcal{A})||$.
\end{thm}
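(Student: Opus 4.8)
This statement is an immediate consequence of Theorem~\ref{jkjkajsflageee} together with the $V^1$-to-$EF$ translation of Theorem~\ref{Translation}; no new mathematical content is needed, so the plan is simply to exhibit the polynomial-time proof-producing algorithm $P$ and to check that the propositional formulas line up.

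Recall from the paragraph following Theorem~\ref{jkjkajsflageee} that
\[
\varphi(\mathcal{X},\ddot{\mathcal{A}},R,T)\;=\;\bigl(\mathrm{Comp}(R,\mathcal{X},\ddot{\mathcal{A}})\wedge R^{[\#E_{\mathcal{X}}]}=\emptyset\bigr)\longrightarrow\neg Hom(\mathcal{X},\ddot{\mathcal{A}},T)
\]
is a $\Sigma^{1,b}_0$-formula with free variables $\mathcal{X},\ddot{\mathcal{A}},R,T$, and that Theorem~\ref{jkjkajsflageee} gives $V^1\vdash\forall\mathcal{X}\,\forall\ddot{\mathcal{A}}\,\forall R\,\forall T\,\varphi(\mathcal{X},\ddot{\mathcal{A}},R,T)$. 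By Theorem~\ref{Translation} there is a $p$-time algorithm that, for every setting of the numerical parameters, outputs a polynomial-size $EF$-proof of the corresponding member of the family $\|\varphi(\mathcal{X},\ddot{\mathcal{A}},R,T)\|$, and by Lemma~\ref{LPropositionalTranslation} those members have polynomially bounded size.

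The algorithm $P$ then proceeds as follows on an unsatisfiable instance $\mathcal{X}$ of $\mathrm{CSP}(\mathcal{A})$. First, by the theorem of~\cite{barto_et_al} recalled in Section~\ref{alskdj;lkys}, replace $\mathcal{A}$ by a $pp$-interconstructible idempotent structure all of whose relations have arity at most two; by the complexity- and proof-complexity-preservation results for $pp$-constructions in~\cite{10.1145/3265985} this step is polynomial time and carries the target tautology $\|\neg Hom(\mathcal{X},\mathcal{A})\|$ to $\|\neg Hom(\mathcal{X},\ddot{\mathcal{A}})\|$ with only polynomial, $EF$-checkable overhead, where $(\mathcal{X},\ddot{\mathcal{A}})$ is the digraph-with-domains encoding of Definition~\ref{DIGRAPHSS}. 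Second, run the polynomial-time Mal'tsev algorithm of~\cite{doi:10.1137/050628957} in the simplified form of Section~\ref{alskhf;shgf} to compute $R=(R^{[0]},\ldots,R^{[\#E_{\mathcal{X}}]})$; iterating Theorem~\ref{alalksldkjk} over the levels $l=1,\ldots,\#E_{\mathcal{X}}$ shows that each $R^{[l]}$ is a compact representation of $\mathscr{R}_{\Theta_l}$, so since $\mathcal{X}$ is unsatisfiable we get $\mathscr{R}_{\Theta}=\mathscr{R}_{\Theta_{\#E_{\mathcal{X}}}}=\emptyset$, hence $R^{[\#E_{\mathcal{X}}]}=\emptyset$ and $\mathrm{Comp}(R,\mathcal{X},\ddot{\mathcal{A}})$ holds. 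Third, invoke the procedure of Theorem~\ref{Translation} with parameters fixed by the concrete finite objects $\mathcal{X},\ddot{\mathcal{A}},R$ to obtain a polynomial-size $EF$-proof of $\|\varphi(\mathcal{X},\ddot{\mathcal{A}},R,T)\|$, and substitute the known bits of $\mathcal{X},\ddot{\mathcal{A}},R$ for the atoms coding them throughout that proof (substitution of constants preserves $EF$-proofs and takes polynomial time). After the substitution the antecedent of $\varphi$ becomes a variable-free formula that evaluates to $\top$ and therefore has a linear-size $EF$-derivation; one modus ponens step then produces an $EF$-proof of $\|\neg Hom(\mathcal{X},\ddot{\mathcal{A}},T)\|$ over the atoms coding $T$, i.e.\ of $\|\neg Hom(\mathcal{X},\ddot{\mathcal{A}})\|$, and hence, undoing the first-step reduction, of $\|\neg Hom(\mathcal{X},\mathcal{A})\|$. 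Every stage runs in polynomial time, so $P$ is polynomial time.

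The two places that need a little care are the ``interface'' checks: that the reduction to at-most-binary idempotent languages carries $\|\neg Hom(\mathcal{X},\mathcal{A})\|$ to $\|\neg Hom(\mathcal{X},\ddot{\mathcal{A}})\|$ with a polynomial, $EF$-verifiable transformation (which is what the $pp$-construction results of~\cite{10.1145/3265985} provide), and that the concrete true antecedent $\mathrm{Comp}(R,\mathcal{X},\ddot{\mathcal{A}})\wedge R^{[\#E_{\mathcal{X}}]}=\emptyset$ is discharged cheaply in $EF$ once the constants are plugged in. I do not expect either to cause trouble: the substantive work -- that $V^1$ proves the soundness of the Mal'tsev algorithm -- is exactly Theorem~\ref{jkjkajsflageee}, and the present statement is just its propositional reflection.
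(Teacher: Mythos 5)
Your proposal is correct and follows exactly the route the paper takes: the paper derives this theorem in one line from Theorem~\ref{jkjkajsflageee} (soundness in $V^1$ of the $\Sigma^{1,b}_0$-formula $\varphi$) together with the Translation Theorem~\ref{Translation}. Your additional details --- running the algorithm to produce the witness $R$, substituting the concrete bits, discharging the true antecedent by modus ponens, and the $pp$-construction interface for the reduction to binary idempotent languages --- are all standard and consistent with what the paper leaves implicit.
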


\section{Soundness of Dalmau's algorithm in theory $V^1$}\label{alskdyf;kajdsghy;kgj}

In this section, we extend the result for Mal'tsev CSPs to GMM CSPs by formalizing Dalmau's algorithm, which solves instances of GMM CSPs in polynomial time~\cite{lmcs:2237}.

The structure of Dalmau's algorithm closely mirrors that of Mal'tsev algorithm, as described in~\cite{lmcs:2237}. Therefore, we do not repeat the formalization of most of the algorithm, as it does not introduce any novel elements relevant to the paper. Instead, we focus on the differences between the two algorithms.

\subsection{Formalization of Dalmau's algorithm in $V^1$}\label{aorgj['aigh'a]}

Fix an algebra $\A$ of size $q$ with a $(k+1)$-ary GMM operation $\varphi$, represented as a pair $([q], F)$. Then for $F$ we state
\begin{equation}\label{asdfgg}
    \begin{gathered}
        GMM_{k+1}([q], F) \iff wMap_{k+1}(\underbrace{[q],[q],\ldots,[q]}_{k+2}, F) \wedge \forall a<q\forall b <q\\
\big(F(a, b, \ldots , b) = F(b, a, \ldots , b) = \ldots = F(b, b, \ldots , a) = b   \,\,\wedge\,\,\,\,\,\\
F(b, a, \ldots , a) = F(a, b, \ldots , a) = \ldots = F(a, a, \ldots , b) = a \big) \\
\vee\\
\big( F(a,b,\ldots,b) = F(b,b\ldots ,a) = a \,\,\wedge\,\,\,\,\,\\
F(b,a,\ldots,a) = F(a,a\ldots ,b) = b\big) .
    \end{gathered}
\end{equation}
For $F$ satisfying \eqref{asdfgg} We say that $a,b$ is a minority (majority) pair in $A$ if
\begin{equation}
    \begin{gathered}
        MinPair(a,b,F)\iff F(a,b,\ldots,b) = F(b,b,\ldots,a) = a, \\
        MajPair(a,b,F) \iff F(a,b,\ldots,b) = F(b,b,\ldots,a) = b.
    \end{gathered}
\end{equation}
For any subset $R' \subseteq A^n$, we encode its signature as in Section~\ref{askfdh;qweyi} using a set $S(i, a, b)$, with the additional requirement that $a$ and $b$ form a minority pair. However, the set $R$ encoding a compact representation of $R'$ must be modified, as we now work with two distinct types of objects: elements stored in $R$ based on the signature of $R'$, and elements stored in $R$ based on projections of $R'$ onto subsets $I \subseteq \{0, \ldots, n - 1\}$ with $|I| \leq k$.

During the computation, we must be able to store and retrieve elements of the second type using unique identifiers. In most cases, these two types of objects are used in separate subroutines, with the exception of the function $nonempty$, which must have uniform access to all elements of $R$. This is why all elements - regardless of type - are stored within a single set $R$. The structure of $R$ is as follows.
\begin{equation}
    \begin{gathered}
    R(\underbrace{0}_{\text{signature type}},\underbrace{i,a,b}_{\text{signature index}},\,\,\,\,\,\,\underbrace{q,\ldots ..,q},\underbrace{r,c}_{\text{map}}),\\
    R(1,i,a,q,q,\ldots ..,q,r,c)\\
    R(\underbrace{2}_{\text{projection type |I|=2} },\,\,\,\,\,\,\,\,\,\,\,\,\underbrace{i_1,i_2,a_{i_1},a_{i_2}}_{\text{projection index}},q,\ldots .,q,r,c),\\
    \ldots \\
    R(\underbrace{k}_{\text{projection type |I|=k}},\,\,\,\,\,\,\,\,\,\,\,\,\,\underbrace{i_1,\ldots,i_k,a_{i_1},\ldots,a_{i_k}}_{\text{projection index}},r,c).
    \end{gathered}
\end{equation}
To avoid storing duplicate elements, we always require $i_1<i_2<\ldots <i_s$ for every $s\leq k$. The size of this set is polynomially bounded by 
$$b_r = \langle k, \underbrace{n,\ldots,n}_{k}, \underbrace{q,\ldots,q}_k, n,q  \rangle.
$$
The condition for $S$ to be a signature for $R$ is then reformulated as follows:
\begin{equation}
\begin{gathered}
 Sig_{CR}(S, R) \iff \forall i<n\forall a<q\forall b<q\,\,  MinPair(a,b,F) \rightarrow \\
 \big[ S(i,a,b)\longleftrightarrow
R^{[0iabq\ldots q]}(i)=a \wedge R^{[0ibaq\ldots q]}(i)=b \wedge \\
\forall j<i\,\,R^{[0iabq\ldots q]]}(j) = R^{[0ibaq\ldots q]}(j)\wedge wMap_1([n], [q], R^{[0iabq\ldots q]]})\wedge wMap_1([n], [q], R^{[0ibaq\ldots q]})\big],
\end{gathered}
\end{equation}
and for $\mathscr{R}_{\Theta}$ as follows:
\begin{equation}\label{eqj3jkr}
\begin{gathered}
   Sig_{SS}(S, \mathcal{X},\ddot{\mathcal{A}}) \iff \forall i<n\forall a<q\forall b<q\,\,  MinPair(a,b,F) \rightarrow \\\big[S(i,a,b)\longleftrightarrow
    \exists T_a,T_b\leq b_h\,\,
T_a(i)=a \wedge T_b(i)=b \wedge \forall j<i\,\, T_a(j) = T_b(j)\wedge \\
\wedge Hom(\mathcal{X},\ddot{\mathcal{A}}, T_a)\wedge Hom(\mathcal{X},\ddot{\mathcal{A}}, T_b)\big].
\end{gathered}
\end{equation}
For $R$ to be a compact representation of $\mathscr{R}_{\Theta}$ we now require:
\begin{equation}
    \begin{gathered}
        CompRep(R,\mathscr{R}_{\Theta}) \iff 
        Sig(R)=Sig(\mathscr{R}_{\Theta}) \wedge R\subseteq \mathscr{R}_{\Theta} \wedge \forall T< b_h \,\, Hom(\Theta, T)\rightarrow\\
        \forall i<n \exists g<b_r\,\, R^{[g]}(i_1) = T(i_1)\wedge\\
         \forall i_1<i_2<n\,\,\exists g<b_r\,\, R^{[g]}(i_1) = T(i_1)\wedge R^{[g]}(i_2) = T(i_2) \wedge \\
         \ldots \\
         \forall i_1<\ldots <i_k<n\,\,\exists g<b_r\,\, R^{[g]}(i_1) = T(i_1)\wedge \ldots \wedge R^{[g]}(i_k) = T(i_k).
    \end{gathered}
\end{equation}
A sequence of CSP instances $\mathcal{X}_0:=([n], E_{\mathcal{X}_0}), \ldots, \mathcal{X}_m:=([n], E_{\mathcal{X}_m}) = \mathcal{X}$ is defined exactly as in Section \ref{alskhf;shgf}. As the initial set $R^{[0]}$ we encode the set from Example \ref{++++____((JHJHJH}. Recall that $d_j = min(D_j)$.
\begin{equation}
    \begin{gathered}\label{askjfh;ashfd}
\forall i<n, \forall a,b\in D_i,\, MinPair(a,b,F)\\
R^{[0]}(0,i, a,b ,q,\ldots,q,i,a)\wedge \forall j \neq i < n \, R^{[0]}( 0,i, a,b ,q,\ldots,q,j,d_j), \\
\forall i<n\forall a\in D_i\\
R^{[0]}(1,i, a,q ,q,\ldots,q,i,a)\wedge \forall j \neq i < n \, R^{[0]}( 1,i, a, q,q,\ldots,q,j,d_j),\\
\forall i_1<i_2<n\forall a_{i_1}\in D_{i_1}\forall a_{i_2}\in D_{i_1} \\
R^{[0]}(1,i_1,i_2,a_{i_1},a_{i_2},q ,\ldots,q,i_1,a_{i_1})\wedge R^{[0]}(1,i_1,i_2,a_{i_1},a_{i_2},q,\ldots,q,i_1,a_{i_1})\wedge \\
\forall j<n,\, j\neq i_1\wedge j\neq i_2\,\, \, R^{[0]}(1,i_1,i_2,a_{i_1},a_{i_2},q ,\ldots,q,j,d_j),\\
\ldots \\
\forall i_1<\ldots <i_k<n\forall a_{i_1}\in D_{i_1},\ldots,\forall a_{i_k}\in D_{i_k} \\
R^{[0]}(k,i_1,\ldots,i_k,a_{i_1},\ldots,a_{i_k},i_1,a_{i_1})\wedge \ldots \wedge R^{[0]}(k,i_1,\ldots,i_k,a_{i_1},\ldots,a_{i_k},i_k,a_{i_k})\wedge \\
\forall j<n,\, j\neq i_1\wedge\ldots \wedge j\neq i_k\,\, \, R^{[0]}(k,i_1,\ldots,i_k,a_{i_1},\ldots,a_{i_k},j,d_j).
    \end{gathered}
\end{equation}

As previously mentioned, the structure of Dalmau's algorithm is identical to that of Mal'tsev algorithm. We do not describe in detail the execution of the functions $fixvalues_{\texttt{gmm}}$, $next_{\texttt{gmm}}$, or the group of functions collectively referred to as $nonempty_{\texttt{gmm}}$, as their roles remain exactly the same as in the Mal'tsev case.

The only subroutine that differs substantially is the computation of projections onto at most $k$ coordinates using the function $nonempty_{\texttt{gmm}}$. This subroutine appears within both $fixvalues_{\texttt{gmm}}$ and $next_{\texttt{gmm}}$. Below, it is presented as part of $fixvalues_{\texttt{gmm}}$ in lines $4-5$.
\begin{algorithm}[H]\label{asjd643}
\caption{$fixvalues_{\texttt{gmm}}(R^{[l-1]}, a_0, \ldots, a_{p-1})$}\label{aaasp}
\begin{algorithmic}[1] 
\State $U_{-1} := R^{[l-1]}$ 

\For{$j$ from $0$ to $p-1$}
   \State $U_j := \emptyset$ 
   \For{all $I = \{i_1,\ldots,i_{|I|}\}\subseteq \{0,\ldots,n-1\}$ with $|I|\leq k$ and $(b_{i_1},\ldots,b_{i_{|I|}})\in\pi_I(U_{j-1})$}
   \State $U_j :=U_j\cup nonempty_{\texttt{gmm}}^3(U_{j-1},j,i_1,\ldots,i_{|I|},\{a_j,b_{i_1},\ldots,b_{i_{|I|}}\})$
   \EndFor
        \For{all $(i, a, b) \in \mathrm{Sig}(U_{j-1})$ with $i > j$ and $a,b$ a minority pair}
        \State Let $T_a, T_b \in U_{j-1}$ witness the tuple $(i, a, b)$; if $a = b$, choose $T_a = T_b$.
\State Let $T := nonempty_{\texttt{gmm}}^4(U_{j-1}, j, i, \{(a_j, a)\})$ 
        \If{$T \neq \emptyset$} 
           \State $U_j := U_j \cup \{T, \varphi(T, T,\ldots,T,\varphi(T, T_a,\ldots,T_a, T_b)\}$ 
        \EndIf
     \EndFor
\EndFor
\\
\Return{$U_{p-1}$}
\end{algorithmic}
\end{algorithm}
According to line 4, for every $0 \leq j < p$, the function $nonempty_{\texttt{gmm}}^3$ in evoked at most
\[
\binom{n}{1}q + \binom{n}{2}q^2 + \ldots + \binom{n}{k}q^k = \sum_{i=1}^{k} \binom{n}{i} q^i = b_k
\]
times, which is polynomial in the number of variables. The set constructed during a single run of $nonempty_{\texttt{gmm}}^3$ requires at most $q^{|I|+1}$ layers, which remains polynomial for each fixed $I$. 

The set $U^{[j]}$ has the same structure as the set $R$, and in lines $4-5$ we construct the layers $U^{[j][1]}$ through $U^{[j][k]}$. Let us now examine its definition. For the purpose of formalization, the function $nonempty_{\texttt{gmm}}^3$ will be split into $k$ separate functions:
\[
nonempty_{\texttt{gmm}}^{3,|I|=1}, \quad nonempty_{\texttt{gmm}}^{3,|I|=2}, \quad \ldots, \quad nonempty_{\texttt{gmm}}^{3,|I|=k}.
\]
Then the definition is as follows. 
\begin{equation}
    \begin{gathered}
  \forall i<n\forall b<q \forall r<n\forall c<q \,\,U^{[j][1]}(i,b,q,\ldots,q,r, c) \iff \\
  \forall r'<n \exists c'<q\,\,
  U^{[j-1][1][ibq\ldots q]}(r')=c'\wedge 
  \\
nonempty_{\texttt{gmm}}^{3,|I|=1}(U^{[j-1]}, j,i,\{(a_{j},b\})(r,c)
  \\
  \forall i_1<i_2<n\forall b_{i_1}<q \forall b_{i_2}<q \forall r<n\forall c<q \,\,U^{[j][2]}(i_1,i_2,b_{i_1},b_{i_2},q,\ldots,q,r, c) \iff \\
  \forall r'<n \exists c'<q\,\,
  U^{[j-1][2][i_1i_2b_{i_1}b_{i_2}q\ldots q]}(r_1)=c_1\wedge 
  \\
nonempty_{\texttt{gmm}}^{3,|I|=2}(U^{[j-1]}, j,i_1,i_2,\{(a_{j},b_{i_1},b_{i_2}\})(r,c)\\
  \ldots \\
\forall i_1<i_2<\ldots <i_k<n, \forall b_{i_1}<q,\ldots,\forall b_{i_k}<q\forall r<n\forall c<q \,\,U^{[j][k]}(i_1,\ldots,i_k,b_{i_1},\ldots,b_{i_k},r, c) \iff \\
  \forall r'<n \exists c'<q\,\,
  U^{[j-1][k][i_1\ldots i_kb_{i_1}\ldots b_{i_k}q\ldots q]}(r_1)=c_1\wedge 
  \\
nonempty_{\texttt{gmm}}^{3,|I|=k}(U^{[j-1]}, j,i_1,\ldots,i_k,\{(a_{j},b_{i_1},\ldots,b_{i_k})\})(r,c).
    \end{gathered}
\end{equation}
The second lines in all $k$ definitions correspond to checking whether $(b_{i_1}, \ldots, b_{i_{|I|}}) \in \pi_I(U_{j-1})$ in the algorithm.

From the above discussion, it is clear that the formalization of Dalmau's algorithm can be carried out with only minor modifications to the approach used for Mal'tsev algorithm - though the details are likely to be more tedious due to the more complex definition of the representation set. Since the overall structure of the algorithm and the logical complexity of its subroutines remain essentially the same, we do not present the full formalization here, but instead state the resulting theorems.

\subsection{GMM Upper Bound}\label{alskh;alisydf;sdd}

\begin{thm}\label{jkjkajsflageee}
For any relational structure $\mathcal{A}$ that corresponds to an algebra with a GMM operation, theory $V^1$ proves the soundness of Dalmau's algorithm:
\[
V^1 \vdash \forall \mathcal{X} \, \forall \ddot{\mathcal{A}} \, \forall R  \;\; \left( \mathrm{Comp}(R, \mathcal{X}, \ddot{\mathcal{A}}) \wedge R^{[\#E_{\mathcal{X}}]} = \emptyset \longrightarrow \neg \mathrm{Hom}(\mathcal{X}, \ddot{\mathcal{A}}) \right),
\]
\end{thm}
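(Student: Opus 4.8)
The plan is to reuse the proof of the Mal'tsev soundness theorem in Section~\ref{alskdjf';lahdg'a} almost verbatim, replacing each Mal'tsev subroutine by its GMM counterpart. First I would fix an unsatisfiable CSP instance $\Theta=(\mathcal{X},\ddot{\mathcal{A}})$ corresponding to an algebra $([q],F)$ with a $(k+1)$-ary GMM term $\varphi$, and take $R$ to be the set produced by Dalmau's algorithm on input $\mathcal{X}$, assembled along the partial instances $\mathcal{X}_0,\ldots,\mathcal{X}_m=\mathcal{X}$ exactly as in Section~\ref{aorgj['aigh'a]}. The base step is to check that $R^{[0]}$ is a compact representation of $\mathscr{R}_{\Theta_0}$ in the GMM sense, i.e.\ $R^{[0]}\subseteq\mathscr{R}_{\Theta_0}$, $\mathrm{Sig}(R^{[0]})=\mathrm{Sig}(\mathscr{R}_{\Theta_0})$, and $\pi_I(R^{[0]})=\pi_I(\mathscr{R}_{\Theta_0})$ for every $I\subseteq\{0,\ldots,n-1\}$ with $|I|\le k$; this is immediate from the explicit definition in~\eqref{askjfh;ashfd}, which is the formalized version of Example~\ref{++++____((JHJHJH}. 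As in the Mal'tsev case I would stress that it is never necessary to prove that $R^{[l]}$ generates the whole (exponentially large) set $\mathscr{R}_{\Theta_l}$: only containment together with equality of signatures and of all $\le k$-coordinate projections propagates through the induction, and that is exactly what the correctness lemmas give.

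Next I would state and prove the GMM analogues of Lemmas~\ref{djksfasgfljfsf}, \ref{=a=-sify34gtth}, \ref{lalaskksdjjd}, \ref{alksjdca;syfrg} and Theorem~\ref{alalksldkjk}: that $V^1$ proves, for each $0<l\le m$, that if $R^{[l-1]}$ is a compact representation of $\mathscr{R}_{\Theta_{l-1}}$ then $next_{\texttt{gmm}}(R^{[l-1]},\min(E_{\mathcal{X}_l}),E^{[\min(E_{\mathcal{X}_l})]}_{\ddot{\mathcal{A}}})$ outputs a compact representation of $\mathscr{R}_{\Theta_l}$. The proofs transfer from the Mal'tsev case with two changes: new tuples are generated with $\varphi$ rather than $\mu$, and since every domain $D_i$ and every edge relation $E^{[ij]}_{\ddot{\mathcal{A}}}$ is a subalgebra, $\varphi$ still sends homomorphisms to homomorphisms (the GMM version of Lemma~\ref{simplemm}, using that $\varphi$ is idempotent, Remark~\ref{alkdshfg;asgh}); and the internal closure-and-extraction step that in the Mal'tsev case mimicked Theorem~\ref{alalksdjasd} is now the argument behind Theorem~\ref{(()()DJKGSKGLdlkjh}, so each correctness proof carries a $MajPair$/$MinPair$ case split on the pair appearing in the coordinate being treated, while the projection subroutines $nonempty^{3,|I|=s}_{\texttt{gmm}}$ ($1\le s\le k$) restore the $\le k$-projections. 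Formalizability I would handle as in Section~\ref{alskhf;shgf}: all auxiliary objects — $U$ for $fixvalues_{\texttt{gmm}}$, the working sets $V$ and bound sets $B$ for the various $nonempty_{\texttt{gmm}}$ subroutines, and $W$ for $next_{\texttt{gmm}}$ — stay polynomially bounded, now with the extra projection-type layers $R^{[\cdot][1]},\ldots,R^{[\cdot][k]}$ inside a representation set bounded by $b_r$, and are defined layer by layer by $\Sigma^{1,b}_0$-formulas, so $R^{[0]},\ldots,R^{[m]}$ is constructible by $\Sigma^{1,b}_1$-induction and is unique because each $nonempty_{\texttt{gmm}}$ is a well-defined function.

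The concluding argument inside $V^1$ is then as in the Mal'tsev case: if $R^{[\#E_{\mathcal{X}}]}=\emptyset$, then, $R^{[\#E_{\mathcal{X}}]}$ being a compact representation of $\mathscr{R}_{\Theta_m}$, already $\pi_{\{0\}}(\mathscr{R}_{\Theta_m})=\pi_{\{0\}}(R^{[\#E_{\mathcal{X}}]})=\emptyset$, so $\mathrm{Sig}(\mathscr{R}_{\Theta_m})=\emptyset$ as well; unwinding the defining formulas $Sig_{CR}$ and $Sig_{SS}$ shows that no pair $T_a,T_b\le b_h$ witnesses any minority pair $(i,a,b)$, and since for an arbitrary well-defined map $T\colon[n]\to[q]$ the degenerate tuple $(0,T(0),T(0))$ is a minority pair (by idempotence of $\varphi$) and would be witnessed by $(T,T)$ if $T$ were a homomorphism, we get $\neg Hom(\mathcal{X},\ddot{\mathcal{A}},T)$ for every such $T$, hence $\neg\mathrm{Hom}(\mathcal{X},\ddot{\mathcal{A}})$; this is the displayed implication, and — as in the Mal'tsev section — the argument in fact derives it with a free map parameter, i.e.\ in the $\Sigma^{1,b}_0$ form required to invoke Theorem~\ref{Translation} for the GMM upper bound. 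I expect the main obstacle to be organizational rather than conceptual: verifying that the richer representation set and the $k$-fold family of projection subroutines really fit inside polynomially bounded strings with $\Sigma^{1,b}_0$ bit-graphs, and threading the majority/minority case distinction through every correctness lemma without disturbing the outer induction on the level $l$ — the bookkeeping that the paper deliberately leaves unspelled.
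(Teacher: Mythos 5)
Your proposal is correct and follows essentially the route the paper intends: the paper itself does not spell out a proof of this theorem but explicitly defers to the Mal'tsev argument (Theorem~\ref{jkjkajsflageee} of Section~\ref{alskdjf';lahdg'a} together with the modifications of Section~\ref{aorgj['aigh'a]}), and your write-up supplies exactly those intended details --- the GMM base case from Example~\ref{++++____((JHJHJH}, the $\varphi$-versions of the correctness lemmas with the $MajPair$/$MinPair$ case split, and the polynomial bounding of the enriched representation set. Your concluding step via emptiness of $\pi_{\{0\}}(\mathscr{R}_{\Theta_m})$ is a small, valid shortcut made available by the projection components of GMM compact representations, and is interchangeable with the signature-based argument used in the Mal'tsev case.
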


\begin{thm}[GMM Upper Bound]
For any relational structure $\mathcal{A}$ that corresponds to an algebra with a GMM operation, there exists a polynomial-time algorithm $P$ such that, for any unsatisfiable instance $\mathcal{X}$ of CSP($\mathcal{A}$), the output $P(\mathcal{X})$ is a propositional proof in the extended Frege proof system of the propositional translation $||\neg Hom(\mathcal{X}, \mathcal{A})||$.
\end{thm}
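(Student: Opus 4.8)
The plan is to derive the GMM Upper Bound as an immediate corollary of Theorem~\ref{jkjkajsflageee} (soundness of Dalmau's algorithm in $V^1$) together with the $V^1$ Translation Theorem~\ref{Translation}, exactly mirroring the argument used for the Mal'tsev Upper Bound. First I would observe that the formula
\[
\varphi(\mathcal{X},\ddot{\mathcal{A}},R,T) = \mathrm{Comp}(R,\mathcal{X},\ddot{\mathcal{A}}) \wedge R^{[\#E_{\mathcal{X}}]} = \emptyset \longrightarrow \neg \mathrm{Hom}(\mathcal{X},\ddot{\mathcal{A}},T)
\]
is $\Sigma^{1,b}_0$: every constituent predicate ($\mathrm{Hom}$, $\mathrm{Comp}$, the emptiness test, and the relations $Sig_{CR}$, $Sig_{SS}$ in the GMM encoding) is built from bounded number quantifiers only, with string quantifiers eliminated by supplying the witness maps $T_a,T_b$ as extra free string inputs as in the Mal'tsev case. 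By Theorem~\ref{jkjkajsflageee}, $V^1 \vdash \forall \mathcal{X}\,\forall \ddot{\mathcal{A}}\,\forall R\,\forall T\; \varphi(\mathcal{X},\ddot{\mathcal{A}},R,T)$, so Theorem~\ref{Translation} yields a $p$-time algorithm producing polynomial-size extended Frege proofs of the propositional translations $\|\varphi(\mathcal{X},\ddot{\mathcal{A}},R,T)\|$ for all parameter values.

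Next I would convert these proofs of $\|\varphi\|$ into proofs of $\|\neg \mathrm{Hom}(\mathcal{X},\mathcal{A})\|$ for a fixed unsatisfiable instance $\mathcal{X}$. Given $\mathcal{X}$, the reduction algorithm first applies the height-$1$/pp-constructibility step (the theorem from \cite{barto_et_al} cited in Section~\ref{alskdj;lkys}) to pass to an equivalent instance over a binary idempotent GMM template $\ddot{\mathcal{A}}$; it then simulates Dalmau's algorithm on $\mathcal{X}$ to compute the actual sequence $R^{[0]},\ldots,R^{[\#E_{\mathcal{X}}]}$, which (since $\mathcal{X}$ is unsatisfiable) terminates with $R^{[\#E_{\mathcal{X}}]} = \emptyset$. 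Substituting these concrete bit-strings for $R$ makes the antecedent $\mathrm{Comp}(R,\mathcal{X},\ddot{\mathcal{A}}) \wedge R^{[\#E_{\mathcal{X}}]}=\emptyset$ a propositional tautology with a short $EF$ proof (it asserts only that the computed strings satisfy explicit defining equations, which is checkable in polynomial time and hence provable in $EF$ by the standard verifier-to-proof construction). Combining this with the translated proof of $\|\varphi\|$ by modus ponens inside $EF$, and quantifying away the auxiliary witness variable $T$ (it appears only as a free propositional variable that can be substituted arbitrarily, e.g.\ by the identically-false assignment, since the conclusion $\neg\mathrm{Hom}(\mathcal{X},\ddot{\mathcal{A}})$ in the $V^1$-proof already universally quantifies $T$), gives a polynomial-size $EF$ proof of $\|\neg\mathrm{Hom}(\mathcal{X},\ddot{\mathcal{A}})\|$. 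Finally, since pp-constructibility preserves proof complexity of CSPs (\cite{10.1145/3265985}), a short $EF$ proof of $\|\neg\mathrm{Hom}(\mathcal{X},\ddot{\mathcal{A}})\|$ translates to one of $\|\neg\mathrm{Hom}(\mathcal{X},\mathcal{A})\|$.

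The main obstacle is not in this outer argument, which is routine given Theorem~\ref{jkjkajsflageee}, but in verifying the hypothesis of that theorem: namely that the entire GMM formalization of Section~\ref{aorgj['aigh'a]} — in particular the more elaborate representation set $R$ carrying both signature-type and projection-type entries, the split functions $nonempty_{\texttt{gmm}}^{3,|I|=1},\ldots,nonempty_{\texttt{gmm}}^{3,|I|=k}$, and the $\varphi$-based combination step $\varphi(T,T,\ldots,T,\varphi(T,T_a,\ldots,T_a,T_b))$ in $fixvalues_{\texttt{gmm}}$ — can still be carried out using only $\Sigma^{1,b}_0$-COMP and $\Sigma^{1,b}_1$-IND, with all auxiliary sets polynomially bounded. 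Concretely, one must check that the bound $b_r = \langle k, n,\ldots,n, q,\ldots,q, n, q\rangle$ and the layered superset bounds for $U$, $V_3$, $B_3$ (of the shape $V_3^{[m][b_k][n][b_k][q^{k+1}+1]}$) remain polynomial in $n$ for fixed $k,q$, and that the case analysis on majority versus minority pairs needed to mimic Theorem~\ref{(()()DJKGSKGLdlkjh} inside $V^1$ is uniformly $\Sigma^{1,b}_0$-expressible; these are exactly the points where the GMM formalization is "more tedious" than the Mal'tsev one, and the claim is that the structural analysis already carried out for Mal'tsev transfers verbatim with the parameter $k$ treated as a constant.
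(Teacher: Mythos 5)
Your proposal follows the paper's own (implicit) argument exactly: the paper derives the GMM Upper Bound, just as it does the Mal'tsev one, as an immediate consequence of the soundness theorem (Theorem~\ref{jkjkajsflageee}) together with the $V^1$ Translation Theorem~\ref{Translation}, and your elaboration of the instantiate-$R$/prove-the-antecedent/modus-ponens bridge is the standard unpacking of that one-line deduction. One small correction: to obtain the target tautology $||\neg \mathrm{Hom}(\mathcal{X},\mathcal{A})||$ you must leave the propositional atoms encoding $T$ free (they are precisely the variables of that tautology), rather than substitute the identically-false assignment for them, which would yield a proof of only a single instance.
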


\section{Conclusion}

    We have shown that the soundness of Mal’tsev algorithm, which solves Mal’tsev CSPs in polynomial time, can be formalized and proved in the bounded arithmetic theory $V^1$, corresponding to the extended Frege proof system. As a consequence, tautologies expressing the non-existence of a solution for unsatisfiable instances of Mal’tsev CSPs admit polynomial-size extended Frege proofs. We further extended this result to Dalmau’s algorithm, which solves CSPs defined by generalized majority-minority (GMM) polymorphisms. Thus, for both algorithms, short proofs of the translation of $\neg \mathrm{Hom}(\mathcal{X}, \mathcal{A})$ in the extended Frege system can be viewed as independent witnesses of unsatisfiability, that is, of the non-existence of a homomorphism between the corresponding relational structures.

We do not believe that the theory for Mal'tsev and GMM CSPs can be further weakened - at least not for the algorithms under consideration - since their formalization requires $\Sigma^{1,b}_1$-induction.
    
    An open question is to investigate the proof complexity of CSPs defined by algebras with few subpowers. An algebra $\mathbb{A}$ is said to have few subpowers if every subalgebra of $\mathbb{A}^n$ has a generating set of size $O(n^k)$ for some fixed $k$. The algorithm for CSPs over such algebras was significantly influenced by Dalmau's algorithm.

    Further open problems include studying the proof complexity of other tractable subclasses of CSPs, and determining whether the theory formalizing the full tractable class of CSPs, as developed in~\cite{gaysin2024}, can be weakened.

\bibliographystyle{plain}    

\bibliography{bibliography}
\end{document}